\title{Estimates on fractional higher derivatives of\\weak solutions for 
the Navier-Stokes equations}
\author{Kyudong Choi\thanks{University of Texas at Austin},
Alexis F. Vasseur\thanks{University of Oxford}}
\date{\today }
\chardef\bslash=`\\ 
\newtheorem{thm}{Theorem}[section]
\newtheorem{cor}[thm]{Corollary}
\newtheorem{lem}[thm]{Lemma}
\newtheorem{prop}[thm]{Proposition}
\newtheorem*{thm*}{Theorem}
\newtheorem*{lem*}{Lemma}
\newtheorem*{prop*}{Proposition}
\theoremstyle{definition}
\newtheorem{defn}{Definition}[section]
\newtheorem*{ack}{Acknowledgment:}
\theoremstyle{remark}
\newtheorem{rem}{Remark}[section]
\DeclareMathOperator{\ebdiv}{div}
\newcommand{\eval}[2][\right]{\relax
  \ifx#1\right\relax \left.\fi#2#1\rvert}
\begin{document}
\maketitle
\markboth{Fractional higher derivatives of weak solutions for 
Navier-Stokes}{Fractional higher derivatives of weak solutions for 
Navier-Stokes}
\renewcommand{\sectionmark}[1]{}
\begin{abstract}
We study weak solutions of the 3D Navier-Stokes equations in whole space
with $L^2$ initial data. 
It will be proved that 
$\nabla^\alpha u $ is locally integrable
in space-time
for any real $\alpha$ such that 
$1< \alpha <3$, which says that almost third derivative is locally integrable.
Up to now, only second derivative $\nabla^2 u$ has been known to 
be locally integrable by standard parabolic regularization.
We also present sharp estimates of 
those quantities in weak-$L_{loc}^{4/(\alpha+1)}$.
These estimates depend only
on the $L^2$ norm of initial data and  integrating domains. 
Moreover, they are valid even for $\alpha\geq 3$ as
 long as $u$ is smooth.
The proof uses a good approximation of Navier-Stokes and a blow-up technique, which let us to focusing on a local study.
For the local study, we use  De Giorgi method with a new pressure decomposition.
To handle non-locality of the fractional Laplacian, we will adopt
some properties of the Hardy space and Maximal functions.
\end{abstract}

\textbf{Mathematics Subject Classification}: 76D05, 35Q30.
\section{Introduction and main result}

\qquad In this paper, any derivative signs ($\nabla,\Delta,(-\Delta)^{\alpha/2},D,\partial$ and etc) denote derivatives in only space variable 
$x\in \mathbb{R}^3$  unless time 
variable $t\in\mathbb{R}$
is clearly specified.
We study the 3-D Navier-Stokes equations 
\begin{equation}\label{navier}
\begin{split}
\partial_tu+(u\cdot\nabla)u+ \nabla P -\Delta u&=0 \quad\mbox{and}\\
\ebdiv u&=0,\quad \quad t\in ( 0,\infty ),\quad x\in \mathbb{R}^3
\end{split}
\end{equation} with $L^2$ initial data
\begin{equation}\label{initial_condition}
u_0\in L^2(\mathbb{R}^3) ,\quad \ebdiv u_0= 0.
\end{equation}

Regularity of weak solutions for the 3D Navier-Stokes equations has long history.
Leray \cite{leray} 1930s and Hopf \cite{hopf} 1950s
proved existence of a global-time 
weak solution for any given $L^2$ initial data. 
Such Leray-Hopf weak solutions $u$ lie 
in $L^\infty(0,\infty;L^2(\mathbb{R}^3))$ and 
$\nabla u$ do in $L^2(0,\infty;L^2(\mathbb{R}^3))$ and
satisfy the energy inequality:
\begin{equation*}
\|u(t)\|_{L^2(\mathbb{R}^3)}^2+
2\|\nabla u\|^2_{L^2(0,t;L^2(\mathbb{R}^3))}
\leq \|u_0\|_{L^2(\mathbb{R}^3)}^2 \quad \mbox{ for a.e. } t<\infty.
\end{equation*}
Until now, regularity and uniqueness of such weak solutions are 
generally open.\\

Instead, many criteria which 
ensure regularity of weak solutions
have been developed. Among
them the most famous one is Lady{\v{z}}enskaja-Prodi-Serrin Criteria 
(\cite{lady},\cite{prodi} and \cite{Serrin}),
which says:
if $u\in L^p((0,T);L^q(\mathbb{R}^3)) $ for 
some $p$ and $q$ satisfying $\frac{2}{p}
+\frac{3}{q}=1$ and $p<\infty$, then it is regular. Recently, the limit case $p=\infty$ 
was established in the paper of 
Escauriaza, Ser{\"e}gin and {\v{S}}ver{\'a}k
\cite{seregin}. We may impose similar conditions to derivatives of velocity, vorticity
or pressure. (see
Beale,  Kato and Majda \cite{bkm}, Beir{\~a}o da Veiga \cite{beirao} and 
Berselli and  Galdi \cite{berselli}) Also, many other conditions
 exist (e.g. see
Cheskidov and Shvydkoy \cite{Cheski},
Chan \cite{chan}
 and 
 \cite{vas_bjorn}).  \\

On the other hand, many efforts have been given to measuring the size 
of possible singular set.
This approach has been initiated by 
Scheffer \cite{scheffer}. Then, Caffarelli, Kohn and Nirenberg \cite{ckn}
improved the result and showed that possible singular sets have zero Hausdorff measure of one dimension for
 certain class of weak solutions (suitable weak
 solutions)
satisfying the following additional inequality 
\begin{equation}\label{suitable}
\partial_t \frac{|u|^2}{2} + \ebdiv (u \frac{|u|^2}{2}) + \ebdiv (u P) +
 |\nabla u |^2 - \Delta\frac{|u|^2}{2} \leq 0
\end{equation} in the sense of distribution. 
There are many other proofs of this fact (e.g. see Lin 
\cite{lin}, \cite{vas:partial} 
and Wolf \cite{wolf}). Similar criteria for interior points with other quantities 
can be found in many places (e.g. see Struwe \cite{struwe},
Gustafson,  Kang and  Tsai \cite{gusta}, Ser{\"e}gin \cite{seregin2}
and Chae, Kang and Lee \cite{chae}). 
Also,   Robinson and  Sadowski \cite{robinson} and  Kukavica \cite{kukavica}
studied 
  box-counting dimensions
  of 
singular sets.\\

In this paper, our main concern is about space-time $L^p_{(t,x)}=L^p_tL^p_x$ 
estimates of higher derivatives for weak 
solutions assuming only $L^2$
initial data. 
$\nabla u\in L^{2}((0,\infty)\times\mathbb{R}^3)$ is obvious from the energy
inequality, and
simple interpolation gives $u\in L^{10/3}$. 
For second derivatives of weak solutions, 
from standard parabolic regularization theory (see 
Lady{\v{z}}enskaja,  Solonnikov and 
 Ural$'$ceva 
 \cite{LSU}),
we know $\nabla^2 u \in L^{5/4}$  by   considering
$(u\cdot\nabla)u$ as a source term. 
With  different ideas, Constantin \cite{peter}   showed 
$L^{\frac{4}{3}-\epsilon}$ for any small $\epsilon>0$ in periodic setting,
and later Lions \cite{lions} improved it up to 
weak-$L^{\frac{4}{3}}$  (or  $ L^{\frac{4}{3},\infty}$)
by assuming $\nabla u_0$ lying in the space 
of all bounded measures in $\mathbb{R}^3$. 
They used natural structure of the equation with some interpolation technique.
On the other hand, Foia{\c{s}}, Guillop{\'e} and Temam \cite{foias_guillope_tem:higher} and Duff \cite{duff}
obtained other kinds of estimates for higher derivatives 
of weak solutions while  Giga and  Sawada \cite{giga} 
and Dong and Du \cite{dong} covered mild solutions. For asymptotic behavior,
we refer Schonbek and  Wiegner \cite{Scho_and_Wiegner}.\\

Recently in \cite{vas:higher}, it has been shown that, for any small
$\epsilon > 0$, any integer $d\geq1$ and any smooth solution $u$ on $(0,T)$,  
we have bounds of $\nabla^d u$ in 
$L_{loc}^{\frac{4}{d+1}-\epsilon}$,
which
 depend only on $L^2$ norm of initial data
once we fix $\epsilon$, $d$ and the domain of integration. 
It can be considered as a 
natural extension of the  result of Constantin \cite{peter} for higher derivatives.
But the idea is completely different in the sense that 
\cite{vas:higher} used the Galilean invariance of transport part of 
the equation and the partial regularity criterion in the version 
of \cite{vas:partial}, which re-proved
the famous result of Caffarelli,  Kohn and  Nirenberg \cite{ckn} by using a parabolic
 version of the De Giorgi method \cite{De_Giorgi}. 
It is noteworthy that this method
 gave full regularity to
the critical Surface Quasi-Geostrophic equation in
 \cite{caf_vas}.
The limit 
non-linear scaling 
$p=\frac{4}{d+1}$ 
appears from
the following invariance of the Navier-Stokes scaling
$u_\lambda(t,x)=
\lambda u(\lambda^2 t,\lambda x)$:
\begin{equation}\label{best_scaling} 
\|\nabla^d u_\lambda\|^p_{L^p}=\lambda^{-1}\|\nabla^d u\|^p_{L^p}.
\end{equation}

In this paper, our main result is better than
the above result of \cite{vas:higher} in the sense of the following three directions. 
First, we achieve the limit case weak-$L^{\frac{4}{d+1}}$
(or  $ L^{\frac{4}{d+1},\infty}$) as Lions  \cite{lions} did for second derivatives.
Second, we make similar bounds for fractional derivatives as well as classical derivatives.
Last, we consider not only smooth solutions but also global-time weak solutions.
These three improvements will give us that
$\nabla^{3-\epsilon}u$, which is almost third derivatives  of weak solutions, is locally integrable
 on $(0,\infty)\times\mathbb{R}^3$. \\

Our precise result is the following:
\begin{thm}\label{main_thm}
There exist universal constants  
$C_{d,\alpha}$ which depend only on integer $d\geq1$ and real $\alpha\in[0,2)$
with the following two properties $(I)$ and $(II)$:\\


(I) 
Suppose that we have
a smooth solution  $ u $  of \eqref{navier} 
 on  $ (0,T)\times \mathbb{R}^3 $ for some $0<T\leq\infty$
 with some initial data \eqref{initial_condition}. Then it satisfies
 \begin{equation}\label{main_thm_eq}
\quad\|(-\Delta)^{\frac{\alpha}{2}}\nabla^d u\|
_{L^{p,\infty}(t_0,T;L^{p,\infty}(K))}
 \leq C_{d,\alpha}\Big(\|u_0\|_{L^2(\mathbb{R}^3)}^{2} 
+ \frac{|K|}{t_0}\Big)^{\frac{1}{p}}
\end{equation}

 for  any $t_0\in(0,T)$, any integer $d\geq 1$, any $\alpha\in[0,2)$
and any bounded open subset
 $K$ of $ \mathbb{R}^3$, where $p = \frac{4}{d+\alpha+1}$
 and $|\cdot|=$ the Lebesgue measure  in $\mathbb{R}^3$. \\

(II) For any initial data \eqref{initial_condition}, we can construct a
suitable weak solution $u$ of \eqref{navier} on  $ (0,\infty)\times \mathbb{R}^3 $ such that
 $(-\Delta)^{\frac{\alpha}{2}}\nabla^d u$ 
is locally integrable in  $(0,\infty)\times \mathbb{R}^3$
 for $d=1,2$ and for $\alpha\in[0,2)$ with $(d+\alpha)<3$.
Moreover, 
the  estimate \eqref{main_thm_eq} holds
with $T=\infty$
under the same setting of the above part $(I)$
as long as $(d+\alpha)<3$. 
\end{thm}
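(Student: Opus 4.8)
The plan is to reduce the global estimate to a local, scale-invariant one and then run a De Giorgi iteration on a well-chosen nonlinear quantity. First I would exploit the Navier--Stokes scaling $u_\lambda(t,x)=\lambda u(\lambda^2 t,\lambda x)$ recorded in \eqref{best_scaling}: since $\|(-\Delta)^{\alpha/2}\nabla^d u_\lambda\|_{L^p}^p=\lambda^{-1}\|(-\Delta)^{\alpha/2}\nabla^d u\|_{L^p}^p$ with $p=\tfrac{4}{d+\alpha+1}$, and since the weak-$L^p$ quasi-norm on a parabolic cylinder is a sum of such scaled pieces, it suffices to prove a single bound of the form: if $u$ is a suitable solution on the unit cylinder $Q_1=(-1,0)\times B_1$ with a smallness/energy control in $L^\infty_tL^2_x\cap L^2_t\dot H^1_x$ inherited from the energy inequality, then $(-\Delta)^{\alpha/2}\nabla^d u$ is bounded on $Q_{1/2}$ by a universal constant. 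Summing this over a Whitney-type covering of $(t_0,T)\times K$ by cylinders of size comparable to $\sqrt{t_0}$, with the number of cylinders at scale $r$ controlled by $|K|/r^3$ and the level sets controlled by the partial regularity machinery of \cite{vas:partial,ckn}, produces exactly the right-hand side $\big(\|u_0\|_{L^2}^2+|K|/t_0\big)^{1/p}$.

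For the local boundedness I would follow the De Giorgi scheme used in \cite{vas:higher,vas:partial}, but applied to the fractional higher-order quantity. The key device is the Galilean/Navier--Stokes invariance: after subtracting a suitable constant drift and a polynomial correction, one works with the truncated energies $A_k=\sup_t\int (v-C_k)_+^2\,\eta_k^2 + \int\!\!\int |\nabla\big((v-C_k)_+\eta_k\big)|^2$ where $v$ is the relevant component of $(-\Delta)^{\alpha/2}\nabla^d u$ (or an auxiliary quantity whose control yields it), $C_k\uparrow$ is a sequence of truncation levels, and $\eta_k$ are shrinking cutoffs. One then establishes a nonlinear recursion $A_{k+1}\le C^k A_k^{1+\beta}$ for some $\beta>0$, which forces $A_k\to0$ provided $A_0$ is small, i.e. provided the solution is "subcritical" on the starting cylinder — and subcriticality is guaranteed precisely by the dimensional condition $d+\alpha<3$ together with the partial regularity theorem, which says the solution is smooth away from a set of parabolic dimension at most $1$, so almost every cylinder in the covering is a "good" cylinder on which De Giorgi applies. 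The restriction $d\in\{1,2\}$, $\alpha<2$ in part (II) is exactly the range where $p=\tfrac{4}{d+\alpha+1}>1$ and the quantity lies in the regime reachable from $L^2$ data without assuming smoothness.

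The main obstacle, and the genuinely new ingredient, is the nonlocality of $(-\Delta)^{\alpha/2}$, which breaks the purely local structure the De Giorgi method relies on. The fractional Laplacian of a locally-defined truncation is not itself local: commuting $(-\Delta)^{\alpha/2}$ past the cutoff $\eta_k$ and past the truncation $(\cdot-C_k)_+$ generates error terms that see the solution on all of $\mathbb{R}^3$. To control these I would split the pressure and the nonlinear term into a near-field part (treated by the usual Calderón--Zygmund / local energy estimates) and a far-field tail, and estimate the tail using Hardy space $\mathcal H^1$ and maximal-function bounds: the crucial structural fact is that $(u\cdot\nabla)u=\operatorname{div}(u\otimes u)$ with $\operatorname{div}u=0$ has a div-curl / compensated-compactness form placing it in $\mathcal H^1$, whose dual $\mathrm{BMO}$ pairing tames the logarithmic tails, while $|\nabla^2 u|\le C\,M(|\nabla\Delta|^{-1}\cdots)$-type maximal estimates convert the nonlocal remainder into a local quantity already controlled by the energy. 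A secondary technical point is part (II): passing the estimate to a genuine weak solution requires a "good approximation" — mollify the data, solve, obtain the a priori bound \eqref{main_thm_eq} uniformly from part (I) since the constants depend only on $\|u_0\|_{L^2}$, and pass to the limit using that weak-$L^p$ bounds are stable under weak-$*$ convergence — together with checking the limit solution is suitable (satisfies \eqref{suitable}), which is standard once strong $L^2_{loc}$ compactness of the approximants is in hand.
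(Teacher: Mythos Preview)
Your proposal has the right high-level ingredients (scaling, Galilean invariance, De Giorgi, Hardy space, maximal functions) but the architecture is off in two essential places, and as written the argument would not close.

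\textbf{First gap: what De Giorgi is run on.} You propose to run the De Giorgi iteration directly on $v=(-\Delta)^{\alpha/2}\nabla^d u$, with truncations $(v-C_k)_+$. This does not work: $v$ does not satisfy any useful local energy inequality, and commuting the fractional operator with truncation/cutoff is not a manageable error---it is the whole problem. What the paper actually does is run De Giorgi on $|u|$ itself (via $v_k=(|u|-E_k)_+$ where $E_k$ absorbs a time-dependent piece of the pressure), obtaining a \emph{quantitative} local statement: under smallness of $\|u\|_{L^\infty_tL^2_x}+\|\nabla u\|_{L^2}+\|P\|_{L^1_tL^1_x}+\|\mathcal M(|\nabla u|)\|_{L^2}$ on a unit cylinder, one has $|u|\le 1$ on the half cylinder. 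Only \emph{after} $|u|$ is bounded does one bootstrap (parabolic regularity plus a careful pressure lemma) to get $|\nabla^d u|\le C_d$ locally, and only then does one pass to fractional derivatives via the singular-integral representation \eqref{fractional_integral}, splitting into a near part (controlled by the already-established $C^{d+2}$ bound) and a far part (controlled by the maximal-function and Hardy-space terms built into the hypothesis). The nonlocality is thus handled \emph{after} boundedness, not inside the De Giorgi loop.

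\textbf{Second gap: how the local result becomes the weak-$L^p$ estimate.} Your mechanism---Whitney covering, ``almost every cylinder is good because the singular set has parabolic dimension $\le 1$''---is not what yields \eqref{main_thm_eq}, and would not give the sharp exponent. The paper instead blows up around each point $(t,x)$ at scale $\epsilon$ along a Lagrangian frame $X_{n,\epsilon}$, and defines the ``good'' set $\Omega_{n,\epsilon,t}$ as those $x$ where a single \emph{scale-invariant pivot quantity} $F_n=|\nabla u|^2+|\nabla^2 P|+|\mathcal M(|\nabla u|)|^2+\cdots$ is $\le\bar\eta$ on the rescaled cylinder. A one-line Chebyshev (plus incompressibility of the flow) gives $|\Omega_{n,\epsilon,t}^c|\le \epsilon^4\,\mathcal M^{(t)}(Cg_n/\bar\eta)(t)$ with $g_n(t)=\int F_n(t,\cdot)$. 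Since on $\Omega_{n,\epsilon,t}$ the local proposition gives $|(-\Delta)^{\alpha/2}\nabla^d u(t,x)|\le C\epsilon^{-(d+\alpha+1)}$, the weak-$L^{p}$ bound with $p=4/(d+\alpha+1)$ falls out immediately. No covering, no use of the CKN singular set.

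Two smaller points: (i) the restriction $d+\alpha<3$ in part (II) is \emph{not} a subcriticality condition for De Giorgi; part (I) holds for all $d\ge1$, $\alpha\in[0,2)$ once $u$ is smooth. The restriction enters only at the very end of (II) so that $p>1$ and one can use weak-$L^p$ compactness to pass the uniform bound on the Leray approximants $u_n$ to the limit. (ii) The ``good approximation'' in (II) is specifically Leray's mollified-advection system, and the local proposition must be proved \emph{uniformly} in the mollification parameter after rescaling---this is nontrivial because the approximation is not scale-invariant, and is the reason the local result is formulated for a family (Problem II-$r$) with $r\in[0,\infty)$ and the De Giorgi argument is split into a large-$r$ and small-$r$ regime.
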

Let us begin with some simple remarks.
\begin{rem}\label{frac_rem} 
For any suitable weak solution $u$, 
we can define $(-\Delta)^{\alpha/2}\nabla^d u$ 
in the sense of distributions $\mathcal{D}^\prime$ for any integer $d\geq0$ and for any real
$\alpha\in[0,2)$:
\begin{equation}\label{fractional_distribution}
 <(-\Delta)^{\alpha/2}\nabla^d u;\psi >_{\mathcal{D}^\prime,\mathcal{D}}
 =  (-1)^{d}\int_{(0,\infty)\times\mathbb{R}^3}u\cdot(-\Delta)^{\alpha/2}\nabla^d \psi\mbox{ } dxdt
 \end{equation} for any $\psi\in \mathcal{D}=C_c^\infty((0,\infty)\times\mathbb{R}^3)$
where 
$(-\Delta)^{\alpha/2}$ in the right hand side is 
the traditional 
fractional Laplacian  in $\mathbb{R}^3$ defined by the Fourier transform.
Note that 
$(-\Delta)^{\alpha/2}\nabla^d \psi$ lies in $L_t^\infty L^2_x$. 
Thus, this definition from \eqref{fractional_distribution} makes sense due to
$u\in L_t^\infty L^2_x$. Note also $(-\Delta)^{0}=Id$. For  more general extensions
of this fractional Laplacian operator, we 
 recommend
 Silvestre \cite{silve:fractional}.
\end{rem}

\begin{rem}\label{rmk_dissipation of energy}
Since we impose only \eqref{initial_condition} to $u_0$, the estimate \eqref{main_thm_eq}
is a
 (quantitative)
   regularization result to higher derivatives.
Also, in the proof, we will see that $\|u_0\|_{L^2(\mathbb{R}^3)}^{2}$ in \eqref{main_thm_eq} can be
relaxed to $\|\nabla u\|_{L^2((0,T)\times\mathbb{R}^3)}^{2}$. Thus
it says that any (higher) derivatives can be controlled by having only 
$L^2$ estimate of dissipation of energy.
\end{rem}

\begin{rem}\label{weak_L_p}
 The result of the part $(I)$ for $\alpha=0$ extends the result of the previous paper \cite{vas:higher}
because  for any $0<q<p<\infty$ and any bounded subset $\Omega\subset\mathbb{R}^n$, we have 
\begin{equation*}
\|f\|_{L^{q}(\Omega)}\leq C\cdot
\|f\|_{L^{p,\infty}(\Omega)}
\end{equation*}   where C depends only
on   $p$, $q$, dimension $n$ and Lebesgue measure of $\Omega$ (e.g. see Grafakos \cite{grafakos}).
\end{rem}
\begin{rem}
The assumption ``smoothness'' in the part $(I)$ is about pure differentiability.
For example, the result of the part $(I)$ for $d\geq1$
and $\alpha=0$
holds once we know that  $u$ is $d$-times differentiable. In addition,
constants in \eqref{main_thm_eq} are independent of any possible blow-time $T$.
\end{rem}
\begin{rem}
$p=4/(d+\alpha+1)$ is a very interesting relation as mentioned before. Due to this $p$, the estimate \eqref{main_thm_eq}
is 
a non-linear estimate while many other $a$ $priori$ estimates are linear. 
Also, from  the part $(II)$  when $(d+\alpha)$ is very close to $3$, 
we can see that almost third derivatives of weak solutions are locally integrable. 
Moreover, imagine that the part $(II)$ for $d=\alpha=0$ be true even though
we can NOT prove it here.
This would imply that this weak solution $u$ could lie in $L^{4,\infty}$ which is beyond the 
best known
estimate $u\in L^{10/3}$ from $L^2$ initial data. \\
\end{rem}

Before presenting the main ideas, we want to mention that 
 Caffarelli,  Kohn and  Nirenberg \cite{ckn} 
 contains two different kinds of 
local regularity criteria. The first one is quantitative, and it says that
if $\| u\|_{L^3(Q(1))}$ and $\| P\|_{L^{3/2}(Q(1))}$ is small, 
then $u$ is bounded by some universal constant in $Q({1/2})$. 
The second one  says that
 $u$ is locally bounded near the origin
 if $\limsup_{r\rightarrow 0}$ $r^{-1}\|\nabla u\|^2_{L^2(Q(r))}$ is 
small. So it  is qualitative
in the sense that 
the conclusion says not that $u$ is  bounded by a universal constant but 
that $\sup |u|$ for some local neighborhood is not infinite.\\

On the other hand,  
there is a different quantitative local regularity criterion in
\cite{vas:partial}, which showed
that for any $p>1$, there exists $\epsilon_p$  such that
\begin{equation}\label{vas:partial_result}
\mbox{if}\quad
\|u\|_{L^\infty_tL^2_x(Q_1)}+\|\nabla u\|_{L^2_tL^2_x(Q_1)}+
\|P\|_{L^p_tL^1_x(Q_1)}\leq \epsilon_p, \mbox{ then } |u|\leq 1 
\mbox { in } Q_{1/2}
\end{equation} Recently, this criterion was used in
\cite{vas:higher} in order to obtain higher derivative estimates.
The main proposition in \cite{vas:higher} says that
if both $ \||\nabla u|^2+|\nabla^2 P|\|_{L^1(Q(1))}$
 and some other 
 quantity about pressure
 are small, then $u$ is bounded by $1$ 
at the origin
once $u$ has a mean zero property in space. 
We can observe that $\|\nabla u\|^2_{L^2(Q(1))}$ and $\|\nabla^2 P\|_{L^1(Q(1))}$
have the same best scaling like \eqref{best_scaling} among 
all the other quantities which we can obtain from  $L^2$ initial data.
However, the other quantity about pressure 
 has a slightly worse scaling. That is the reason that  
 the limit case   $L^{\frac{4}{d+1},\infty}$ has been missing in \cite{vas:higher}.\\

Here are the  main ideas of proof. 
First, 
in order to obtain the missing limit case
$ L^{\frac{4}{d+1},\infty}$, we will see that it requires an equivalent estimate 
of \eqref{vas:partial_result} for $p=1$.
Here we extend this result  up to $ p=1$ for some
approximation of the Navier-Stokes (see the proposition 
\ref{partial_problem_II_r}).
To obtain this first goal, we will introduce a new pressure decomposition
(see the lemma \ref{lem_pressure_decomposition}), 
which will be used in  the De Giorgi-type argument. This makes us to 
remove the bad scaling term about pressure 
in \cite{vas:higher}.
As a result, 
by using the Galilean invariance property and   some blow-up technique with the standard Navier-Stokes scaling,
we can proceed our local study in order to obtain a  better 
version of a quantitative partial regularity criterion
for some
approximation of the Navier-Stokes 
(see the proposition 
\ref{local_study_thm}).
As a result, we can prove   $ L^{\frac{4}{d+1},\infty}$ estimate
for classical derivatives ($\alpha=0$ case).\\

Second, 
the result for fractional derivatives ($0<\alpha<2$ case) is  not obvious at all
because there is no proper interpolation theorem 
for $L_{loc}^{p,\infty}$ spaces.
For example, due to the non-locality of the fractional Laplacian operator, the fact $\nabla^2 u\in L_{loc}^{\frac{4}{3},\infty}$ 
with $\nabla^3 u\in L_{loc}^{1,\infty}$ 
does not imply the case of fractional derivatives even if we assume $u$ is smooth. 
Moreover, even though we assume that 
$\nabla^2 u\in L^{\frac{4}{3}}(\mathbb{R}^3)$ 
and $\nabla^3 u\in L^{1}(\mathbb{R}^3)$ which we can NOT prove here,
the standard interpolation theorem still requires $L^p(\mathbb{R}^3)$ for some $p>1$ (we refer 
Bergh  and L{\"o}fstr{\"o}m  \cite{bergh}).
To overcome
the difficulty,
we will use the Maximal functions
of $u$ which capture its behavior of long-range part. 
Unfortunately, second derivatives of pressure, which lie in the Hardy space $\mathcal{H}
\subset L^1(\mathbb{R}^3)$
 from 
 Coifman,  Lions,  Meyer and  Semmes  \cite{clms},
do not have an integrable Maximal function since
the Maximal operator is not bounded on $L^1$. In order to handle non-local parts of pressure,
we will use some
property    of Hardy space,
which says that some integrable functions play 
a similar role of the Maximal function(see \eqref{hardy_property}).\\

Finally, the result $(II)$ for weak solutions comes from 
specific approximation of Navier-Stokes equations that
 Leray \cite{leray} used in order to construct a global time weak solution 
:
$\partial_tu_n+((u_n*\phi_{(1/n)})\cdot\nabla)u_n+ \nabla P_n -\Delta u_n=0$ and $
\ebdiv u_n=0$ where $\phi$ is a fixed mollifier in $\mathbb{R}^3$,
 and $\phi_{(1/n)}$ is defined by $\phi_{(1/n)}(\cdot)=n^3\phi(n\mbox{ }\cdot)$.
Main advantage for us of adopting this approximation is that 
it has strong existence theory of global-time smooth solutions
$u_n$ for each $n$, and it is well-known that there exists
a suitable weak solution $u$ as a weak limit.
In fact,  for any integer $d\geq1$ 
and for any $\alpha\in[0,2)$,
we will obtain  bounds for $u_n$ in the form of \eqref{main_thm_eq}
with $T=\infty$,
which is uniform in $n$.
Since $p=4/(d+\alpha+1)$ is greater than $1$ for the case $(d+\alpha)<3$, 
we can know that $(-\Delta)^{\frac{\alpha}{2}}
\nabla^d u $ exists as a   locally integrable function from weak-compactness of $L^p$ for $p>1$. \\

\noindent However, to prove \eqref{main_thm_eq} uniformly for the approximation 
is nontrivial because our proof is based on local study while
the approximation is not
 scaling-invariant  with the standard Navier-Stokes scaling: 
After the scaling, the advection velocity $u*\phi_{(1/n)}$
depends the original velocity $u$ more non-locally than before.
Moreover, when we consider the case of fractional derivatives of weak solutions, it requires
even Maximal of Maximal functions to handle  non-local parts of 
the advection velocity 
which depends   the original velocity 
non-locally.\\

The paper is organized as follows. In the next section, preliminaries with
the main propositions
\ref{partial_problem_II_r} and 
\ref{local_study_thm}
will be introduced. 
Then we prove those propositions \ref{partial_problem_II_r} and \ref{local_study_thm}
in  sections \ref{proof_partial_prob_II_r} and 
\ref{proof_local study},
respectively.
Finally we will explain how  
the proposition \ref{local_study_thm}
implies the part $(II)$ of the theorem 
\ref{main_thm}
 for $\alpha=0$ and for $0<\alpha<2$  in subsections 
\ref{prof_main_thm_II_alpha_0}
and 
\ref{prof_main_thm_II_alpha_not_0} respectively while
the part $(I)$   will be covered in the subsection \ref{proof_main_thm_I}.
After that, 
the appendix contains some missing proofs of 
technical lemmas.\\

\section{Preliminaries, definitions and main propositions}\label{prelim}
We begin this section by fixing some notations and reminding
some well-known results on analysis. After that we will present
definitions of two approximations and two main propositions. In this paper, any derivatives, convolutions and Maximal functions
are with respect to space variable $x\in\mathbb{R}^3$ unless time variable 
is specified.\\

\noindent  \textbf{Notations for general purpose}\\


We define 
$B(r) =\mbox{ the ball in } \mathbb{R}^3$ 
  centered at the origin with radius  $r$,
  $Q(r) =(-r^2,0)\times B(r)$, the cylinder in  $  \mathbb{R}\times\mathbb{R}^3$ and 
  $B(x;r) =\mbox{ the ball in } \mathbb{R}^3$ 
 centered at x with radius $r$.\\

To the end of this paper, we fix  $\phi \in C^{\infty}(\mathbb{R}^3)$ satisfying:\\
\begin{equation*}\begin{split}
\int_{\mathbb{R}^3} &\phi(x)dx = 1,\quad
 supp(\phi) \subset B(1),\quad
 0 \leq \phi \leq 1\\
\end{split}\end{equation*}
\begin{equation*}\begin{split}
 \phi(x)&=1 \mbox{ for } |x|\leq \frac{1}{2} \quad\mbox{ and }\quad
 \phi \mbox{ is radial. }
\end{split}\end{equation*}
For real number $r > 0 $ , we define functions $\phi_r \in C^{\infty}(\mathbb{R}^3)$
 by $\phi_r(x) =\frac{1}{r^3}\phi(\frac{x}{r})$. Moreover, for $r=0$,
we define $\phi_r=\phi_0=\delta_0$ as the Dirac-delta function, which implies that
the convolution between $\phi_0$ and  any function becomes
the function itself. 
From the Young's inequality for convolutions, we can observe
\begin{equation}\label{young}
 \|f*\phi_r\|_{L^p(B(a))}\leq\|f\|_{L^p(B(a+r))}
\end{equation} due to  $supp(\phi_r)\subset B(r)$ for any $p\in[1,\infty]$,
for any $f\in L^p_{loc}$ and for any $a,r>0$.\\

\noindent  \textbf{ $L^p$,  weak-$L^p$  and Sobolev spaces $W^{n,p}$}\\

Let $K$ be a open subset $K$ of $\mathbb{R}^n$.
For $0<p<\infty $, we define $L^{p}(K)$ by the standard way
with (quasi) norm $\|f\|_{L^{p}(K)}
= (\int_K|f|^pdx)^{(1/p)}$. 
From the Banach-–Alaoglu theorem,
any sequence which is bounded in $L^{p}(K)$ for $p\in(1,\infty)$
has a weak limit from some subsequence due to the weak-compactness.\\

 Also, for $0<p<\infty $, the weak-$L^p(K)$ space
(or $L^{p,\infty}(K)$)  is defined by  \\
\begin{equation*}\begin{split}
L^{p,\infty}(K) = \{ f \mbox{ measurable in } K\subset\mathbb{R}^d
\quad: \sup_{\alpha>0}\Big(\alpha^p\cdot|\{|f|>\alpha\}\cap K|\Big)<\infty\}
\end{split}\end{equation*} with (quasi) norm $\|f\|_{L^{p,\infty}(K)}
= \sup_{\alpha>0}\Big(\alpha\cdot|\{|f|>\alpha\}\cap K|^{1/p}\Big)$.
From the Chebyshev's inequality, we have $\|f\|_{L^{p,\infty}(K)}\leq\|f\|_{L^{p}(K)}$ for any $0<p<\infty$.
Also, for $0<q<p<\infty$, $L^{p,\infty}(K)\subset L^{q}(K)$
once $K$ is bounded (refer the remark \ref{weak_L_p} in the beginning).\\

For any integer $n\geq0$ and for any $p\in[1,\infty]$, we denote $W^{n,p}(\mathbb{R}^3)$ and $W^{n,p}(B(r))$
as the standard Sobolev spaces for the whole space $\mathbb{R}^3$ and for any ball $B(r)$ in $\mathbb{R}^3$,
respectively.\\

\noindent  \textbf{The Maximal function $\mathcal{M}$ and the Riesz transform $\mathcal{R}_j$}\\

The Maximal function $\mathcal{M}$ in $\mathbb{R}^d$ is
 defined by the following standard way:
\begin{equation*}\begin{split}
\mathcal{M}(f)(x)=&\sup_{r>0}\frac{1}{|B(r)|}\int_{B(r)}|f(x+y)|dy.
\end{split}\end{equation*}
Also, we can express this Maximal operator as a
supremum of convolutions: $\mathcal{M}(f)=C\sup_{\delta>0}\Big(\chi_\delta *|f|\Big)$ 
where $\chi=\mathbf{1}_{\{|x|<1\}}$ is the characteristic function of the unit ball,
and $\chi_\delta(\cdot)=(1/\delta^3)\chi(\cdot/\delta)$.
One of properties of the Maximal function is that 
$\mathcal{M}$ is bounded
from  $L^{p}(\mathbb{R}^d)$  to $L^{p}(\mathbb{R}^d)$ 
for $p\in(1,\infty]$ and 
from $L^{1}(\mathbb{R}^d)$ to  $L^{1,\infty}(\mathbb{R}^d)$.
In this paper,
we denote $\mathcal{M}$ and $\mathcal{M}^{(t)}$ as  the Maximal functions
in $\mathbb{R}^3$ and in 
$\mathbb{R}^1$, respectively. \\

For $1\leq j\leq 3$, the Riesz Transform $\mathcal{R}_j$ in $\mathbb{R}^3$ is defined by:
\begin{equation*}\begin{split}
\widehat{\mathcal{R}_j(f)}(x)=\mathit{i}\frac{x_j}{|x|}\hat{f}(x)
\end{split}\end{equation*} 
for any $f\in\mathcal{S}$ (the Schwartz space). 
Moreover we can extend such definition for functions $L^{p}(\mathbb{R}^3)$
for $1<p<\infty$ and it is well-known that
$\mathcal{R}_j$ is bounded in $L^{p}$ for the same range of $p$.\\

\noindent  \textbf{The Hardy space $\mathcal{H}$}\\

The Hardy space $\mathcal{H}$ in $\mathbb{R}^3$ is defined by
\begin{equation*}
 \mathcal{H}(\mathbb{R}^3)=\{f\in L^1(\mathbb{R}^3)\quad: \quad\sup_{\delta>0}|
\mathcal{P}_\delta * f|\in L^1(\mathbb{R}^3)  \}
\end{equation*} where $\mathcal{P}=
C
(1+|x|^2)^{-2}$ is the
Poisson kernel and $\mathcal{P}_\delta$ is defined
by $\mathcal{P}_\delta(\cdot)=\delta^{-3}\mathcal{P}(\cdot/\delta)$.
A norm of 
 $\mathcal{H}$ is defined by $L^1$ norm of $\sup_{\delta>0}|
\mathcal{P}_\delta * f|$. Thus $\mathcal{H}$
is a subspace of $L^1(\mathbb{R}^3)$ and 
$\|f\|_{L^{1}(\mathbb{R}^3)}
\leq\|f\|_{\mathcal{H}(\mathbb{R}^3)}$ for any $f\in\mathcal{H}$.
 Moreover,  
the Riesz Transform is bounded from $\mathcal{H}$ to $\mathcal{H}$.\\

\noindent One of important applications of the Hardy space is the compensated compactness
(see Coifman,  Lions, Meyer and Semmes \cite{clms}). Especially, it says that if $E,B\in L^2(\mathbb{R}^3)$
and $curlE=\ebdiv B=0$ in distribution, then 
$E\cdot B\in\mathcal{H}(\mathbb{R}^3)$ and we have 
\begin{equation*}
\|E\cdot B\|_{\mathcal{H}(\mathbb{R}^3)}\leq
C\cdot\|E\|_{L^2(\mathbb{R}^3)}\cdot\|B\|_{L^2(\mathbb{R}^3)}
\end{equation*} for some universal constant $C$.
In order to obtain some regularity of second derivative of pressure, 
we can combine compensated compactness 
with boundedness of the Riesz transform in $\mathcal{H}(\mathbb{R}^3)$.
For example, if $u$ is  a  weak solution  
of  the Navier-Stokes \eqref{navier}, 
 then a corresponding pressure $P$ satisfies \begin{equation}\label{pressure_hardy}  
\|\nabla^2 P\|_{L^1(0,\infty;\mathcal{H}(\mathbb{R}^3))}\leq
C\cdot\|\nabla u\|_{L^2(0,\infty;L^2(\mathbb{R}^3))}^2
\end{equation} (see Lions \cite{lions} or the lemma 7 in \cite{vas:higher}).\\

\noindent Now it is well known that if we replace  the
Poisson kernel $\mathcal{P}$ with any  function
$\mathcal{G}\in C^\infty(\mathbb{R}^3)$ with compact support, then we have a constant 
$C$ depending only on $\mathcal{G}$ such that
\begin{equation}\begin{split}\label{hardy_property}
\| \sup_{\delta>0}|\mathcal{G}_{\delta}* f|
\|_{L^{1}(\mathbb{R}^3)}
\leq C\| \sup_{\delta>0}|\mathcal{P}_{\delta}* f|
\|_{L^{1}(\mathbb{R}^3)}
= C\|f\|_{\mathcal{H}(\mathbb{R}^3)}
\end{split}\end{equation}
 where $\mathcal{G}_\delta(\cdot)=\mathcal{G}(\cdot/\delta)/\delta^3$.
(see Fefferman and Stein \cite{fefferman} or 
see Stein \cite{ste:harmonic}, Grafakos \cite{grafakos} for modern texts). 
Due to the supremum and the convolution in \eqref{hardy_property},
 we can say that 
 even though   the Maximal function 
 $\sup_{\delta>0}\Big(\chi_\delta *|f|\Big)$ of any non-trivial Hardy space function $f$
 is not integrable, there exist at least 
 integrable functions $\Big(\sup_{\delta>0}\Big|\mathcal{G}_{\delta}* f\Big|\Big)$, which can  capture non-local data as 
Maximal functions  do. 
However, note  the position of the absolute value sign in \eqref{hardy_property},
which is outside of the convolution while
it is inside of the convolution
for the Maximal function.
It implies that 
\eqref{hardy_property} is slightly weaker than the Maximal function
in the sense of controlling non-local data.
 This weakness is the reason that we  introduce
 certain definitions of  $\zeta$ and $h^{\alpha}$
in the following.\\ 

\noindent  \textbf{Some notations which will be useful for fractional derivatives 
 $ {(-\Delta)^{{\alpha}/{2}}}$}\\

The following two definitions of  $\zeta$ and $h^{\alpha}$  will be used
only in the proof for fractional derivatives.
We define $\zeta$ by $\zeta(x)=\phi(\frac{x}{2})-\phi(x)$. 
Then we have
\begin{equation}\begin{split}\label{property_zeta}
& \zeta \in C^{\infty}(\mathbb{R}^3),\quad supp(\zeta)\subset B(2),
 \quad\zeta(x)=0 \mbox{ for } |x|\leq \frac{1}{2} \\
&\mbox{  and }\sum_{j=k}^{\infty}\zeta(\frac{x}{2^j})=1 \mbox{  for }|x|\geq 2^{k}
\mbox{   for any integer } k.
\end{split}\end{equation} 

\noindent In addition, we define function $h^{\alpha}$
for $\alpha>0$ 
by
$h^{\alpha}(x)=\zeta(x)/|x|^{3+\alpha}$.
Also we define $(h^{\alpha})_{\delta}$ and
 $(\nabla^{d}{h^{\alpha})_\delta}$  by 
 $(h^{\alpha})_{\delta}(x)=\delta^{-3}h^{\alpha}(x/\delta)$ and 
 $(\nabla^{d}{h^{\alpha})_\delta}(x)
=\delta^{-3}(\nabla^{d}{h^{\alpha})}(x/\delta)$
for $\delta>0$ and for positive integer $d$, respectively.
Then they satisfy
\begin{equation}\begin{split}\label{property_h}
& (h^{\alpha})_{\delta} \in C^{\infty}(\mathbb{R}^3),
\quad supp((h^{\alpha})_{\delta})\subset B(2\delta)-
 B(\delta/2), \\
&\mbox{  and }\frac{1}{|x|^{3+\alpha}}\cdot\zeta
(\frac{x}{2^j})=\frac{1}{(2^j)^{\alpha }} \cdot (h^{\alpha})_{2^j}(x)
 \mbox{  for any integer } j.
\end{split}\end{equation} 
\ \\

\noindent  \textbf{The definition of the fractional Laplacian  
 $ {(-\Delta)^{{\alpha}/{2}}}$}\\

 For $-3<\alpha\leq2$
 and for $f\in \mathcal{S}(\mathbb{R}^3)$ 
(the Schwartz space), $ (-\Delta)^{\frac{\alpha}{2}}f$  
is defined by the Fourier transform: 
\begin{equation}\label{fractional_fourier}
\widehat{(-\Delta)^{\frac{\alpha}{2}}f}(\xi)=|\xi|^\alpha \hat{f}(\xi)
\end{equation} 
Note that 
 $(-\Delta)^{0}=Id$. 
Especially, for  $\alpha\in(0,2)$, the fractional Laplacian can also be defined  
by the singular integral for any $f\in\mathcal{S}$: 
\begin{equation}\label{fractional_integral}
  (-\Delta)^{\frac{\alpha}{2}}f(x)=C_{\alpha}\cdot P.V.\int_{\mathbb{R}^3}
\frac{f(x)-f(y)}{|x-y|^{3+\alpha}}dy.\\
\end{equation} 

We introduce two notions of approximations to Navier-Stokes.
The first one or (Problem I-n) is the approximation  Leray \cite{leray} used
while the second one or (Problem II-r) will be used in local study after we apply some certain scaling
to (Problem I-n).\\

\noindent  \textbf{Definition of  {(Problem I-n)}:
the first approximation
to Navier-Stokes}\\


\begin{defn}\label{Problem I-n} Let $n\geq1$ be either an integer or the infinity $\infty$, and let $0<T\leq \infty$.
Suppose that $u_0$ satisfy \eqref{initial_condition}.
We say that $(u,P)\in [C^{\infty}\big((0,T)\times\mathbb{R}^3\big)]^2$
is a solution of {(Problem I-n)} on $(0,T)$ for the data $u_0$
if it  satisfies 
\begin{equation}\label{navier_Problem I-n}
 \begin{split}
\partial_tu+((u*\phi_{{\frac{1}{n}}})\cdot\nabla)u+ \nabla P -\Delta u&=0\\
\ebdiv u&=0 \quad t\in ( 0,T ),\quad x\in \mathbb{R}^3 
\end{split}
\end{equation}  
and
\begin{equation}\label{initial_condition_Problem I-n}
u(t)\rightarrow u_0*\phi_{\frac{1}{n}} \mbox{ in } L^2 \mbox{-sense as }
t \rightarrow 0.
\end{equation}

\end{defn}
\begin{rem}
When $n=\infty$, \eqref{navier_Problem I-n} is  the Navier-Stokes  on $(0,T)\times\mathbb{R}^3$ with initial value $u_0$.
\end{rem}
\begin{rem}\label{remark_leray}

If $n$ is not the infinity but an positive integer, then for any given $u_0$ of \eqref{initial_condition}, we have existence 
and uniqueness theory of (Problem I-n) on $(0,\infty)$ with the energy equality 

\begin{equation}\label{energy_eq_Problem I-n}
\|u(t)\|_{L^2(\mathbb{R}^3)}^2+
2\|\nabla u\|^2_{L^2(0,t;L^2(\mathbb{R}^3))}
= \|u_0*\phi_{\frac{1}{n}}\|_{L^2(\mathbb{R}^3)}^2.\\
\end{equation} for any $t<\infty$
and it is well-known that we can extract a sub-sequence which 
converges to a suitable weak 
solution $u$ of \eqref{navier} and \eqref{suitable} with the initial data 
$u_0$ of \eqref{initial_condition}
by limiting procedure on  a sequence of solutions of (Problem I-n)
(see Leray \cite{leray}, or see Lions \cite{lions}, Lemari{\'e}-Rieusset \cite{lemarie} for modern texts).
\end{rem}
\begin{rem}
As mentioned in the introduction section, we can observe that this notion (Problem I-n) 
is not invariant under the standard Navier-Stokes scaling  $u(t,x)\rightarrow 
\epsilon u(\epsilon^2 t,\epsilon x)$ 
due to the advection velocity $(u*\phi_{1/n})$
unless $n$ is the infinity. \\
\end{rem}

\noindent  \textbf{Definition of  {(Problem II-r)}:
the second approximation
to Navier-Stokes}\\

\begin{defn}\label{problem II-r}  Let $0\leq r<\infty$ be real.
We say that $(u,P)\in [C^{\infty}\big((-4,0)\times\mathbb{R}^3\big)]^2$ is a solution of {(Problem II-r)} 
if it satisfies 
\begin{equation}\label{navier_Problem II-r}
 \begin{split}
\partial_tu+(w
\cdot\nabla)u+ \nabla P -\Delta u&=0\\
\ebdiv u&=0, \quad t\in (-4,0 ), \quad x\in \mathbb{R}^3 
\end{split}
\end{equation} 
where $w$ is the difference of two functions:
\begin{equation}\begin{split}\label{w_Problem II-n}
w(t,x)
&= w^\prime(t,x) -  w^{\prime\prime}(t),
 \quad t\in (-4,0 ), x\in \mathbb{R}^3 \\
\end{split}\end{equation} which are defined by u in the following way:
\begin{equation*}\begin{split}
w^\prime(t,x)&=(u*\phi_{r})(t,x) \quad\mbox{ and } \quad
w^{\prime\prime}(t)=\int_{\mathbb{R}^3}\phi(y)(u*\phi_{r})(t,y)dy.
\end{split}\end{equation*}

\end{defn}
\begin{rem}
This notion of {(Problem II-r)} gives us the mean zero property for 
the advection velocity $w$: $\int_{\mathbb{R}^3}\phi(x)w(t,x)dx = 0 $ on $(-4,0)$. Also this $w$ is divergent free from the definition.
Moreover, by multiplying $u$ to \eqref{navier_Problem II-r}, 
we have
\begin{equation}\label{suitable_Problem II-r}
\partial_t \frac{|u|^2}{2} + \ebdiv (w \frac{|u|^2}{2}) + \ebdiv (u P) +
 |\nabla u |^2 - \Delta\frac{|u|^2}{2}= 0\\
\end{equation} in classical sense 
because our definition needs $u$ to be $C^{\infty}$.
\end{rem}
\begin{rem}
We will introduce some specially designed $\epsilon$-scaling 
which is a bridge between (Problem I-n) and (Problem II-r)
(it can be found in \eqref{special designed scaling}).
This scaling
is based on the Galilean invariance in order to obtain
the mean zero property for the velocity $u$:
$\int_{\mathbb{R}^3}\phi(x)u(t,x)dx = 0 $ on $(-4,0)$.
Moreover, after this  $\epsilon$-scaling is applied to  solutions of (Problem I-n),
the resulting functions will
satisfy not conditions of (Problem II-$\frac{1}{n}$) but 
those of (Problem II-$\frac{1}{n\epsilon}$) (it can be found \eqref{special designed scaling result}).
These things will be
stated precisely in the section \ref{proof_main_thm_II}.
\end{rem}

\begin{rem}
When $r=0$, the equation \eqref{navier_Problem II-r} is
the Navier-Stokes  on $(-4,0)\times\mathbb{R}^3$ 
once
we  assume the mean zero property for $u$.\\
\end{rem}

Now we present  two main local-study propositions which require
the notion of (Problem II-r). These are kinds of partial regularity theorems 
for solutions of (Problem II-r). 
The main difficulty to prove these two propositions is that 
 $\bar{\eta}$ and $\bar\delta>0$ should be  independent  of any $r$ in $[0,\infty)$.
We will prove this independence  very carefully, which is the heart of the  section
\ref{proof_partial_prob_II_r} and 
\ref{proof_local study}.\\ 

\noindent  \textbf{The first local study proposition 
for  (Problem II-r)}\\

The following one is a quantitative version of partial regularity theorems
which extends that of \cite{vas:partial} up to $p=1$. The proof 
will be based on the De Giorgi iteration with a  new pressure decomposition 
lemma \ref{lem_pressure_decomposition} which will appear later.
\begin{prop}\label{partial_problem_II_r}
There exists a $\bar\delta>0$ with the following property:\\

If u is a solution of (Problem II-r) for some $0\leq r<\infty$ verifying both
\begin{equation*}\begin{split}
 &\| u\|_{L^{\infty}(-2,0;L^{2}(B(\frac{5}{4})))}+ 
\|P\|_{L^1(-2,0;L^{1}(B(1)))}+\| \nabla u\|_{L^{2}(-2,0;L^{2}(B(\frac{5}{4})))}
\leq \bar{\delta}\\
\end{split}\end{equation*}
\begin{equation*}\begin{split}
\mbox{ and }\quad\quad&\| \mathcal{M}(|\nabla u|)
\|_{L^{2}(-2,0;L^{2}(B(2)))}\leq \bar{\delta},
\end{split}\end{equation*}
 then we have
\begin{equation*}
 |u(t,x)|\leq 1 \mbox{ on } [-\frac{3}{2},0]\times B(\frac{1}{2}).
\end{equation*}\\
\end{prop}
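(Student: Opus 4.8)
The plan is to run a De Giorgi-type iteration on the truncated energy of $u$ at increasing levels $C_k = 1 - 2^{-k}$, using shrinking parabolic cylinders $Q_k$ with vertices approaching $(-3/2,0)\times B(1/2)$, so that an energy quantity $U_k := \sup_t \int |(|u|-C_k)_+|^2 \,dx + \int\!\!\int |\nabla (|u|-C_k)_+|^2$ over $Q_k$ satisfies a nonlinear recursion $U_{k+1} \le C\, b^k\, U_k^{1+\beta}$ for some $b>1$, $\beta>0$; then if the initial quantity $U_0$ is below a threshold $\bar\delta^2$ (which fixes $\bar\delta$), one concludes $U_k \to 0$, hence $|u|\le 1$ on the limiting cylinder. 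The starting point is the local energy equality \eqref{suitable_Problem II-r}, which holds in the classical sense for solutions of (Problem II-r); multiplying by a cutoff $\chi_k$ supported in $Q_k$ and integrating, the troublesome terms are the advection term $\ebdiv(w\,|u|^2/2)$ and the pressure term $\ebdiv(uP)$.

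First I would handle the advection term. Since $w = w' - w''$ with $w''(t)$ spatially constant and $w$ divergence-free, the constant part integrates against $\ebdiv$-form terms to zero (or is absorbed via the cutoff in a way that costs only lower-order terms), and the mean-zero property $\int \phi\, w = 0$ lets us replace $w'$ by $w' - (\text{its local average})$, which by Poincaré is controlled by $\nabla w' = \nabla(u*\phi_r)$; by \eqref{young} this is bounded by $\|\nabla u\|_{L^2}$ on a slightly larger ball, independently of $r$. The extra Maximal-function hypothesis $\|\mathcal{M}(|\nabla u|)\|_{L^2(-2,0;L^2(B(2)))}\le\bar\delta$ is exactly what is needed to control the long-range contribution of $w$ on the inner cylinder: write $w' = u*\phi_r$, split $\phi_r$ into a piece near the origin and tails, and estimate the tails pointwise by $\mathcal{M}(u)$ or, after differentiating, by $\mathcal{M}(|\nabla u|)$, uniformly in $r\in[0,\infty)$ — this uniformity is the whole point of including that second smallness condition.

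Next, the pressure term. Here I would invoke the new pressure decomposition, Lemma~\ref{lem_pressure_decomposition} (to appear), splitting $P = P_{\mathrm{loc}} + P_{\mathrm{nonloc}}$ where $P_{\mathrm{loc}}$ is controlled locally in, say, $L^{3/2}$ by $\|u\|^2$-type quantities via Calderón–Zygmund on a ball, and $P_{\mathrm{nonloc}}$ is a harmonic-type remainder on the inner cylinder whose higher space-derivatives are bounded by $\|P\|_{L^1_tL^1_x(B(1))}$; the latter is small by hypothesis. Plugging these into the energy inequality, using Hölder together with the parabolic embedding $L^\infty_tL^2_x \cap L^2_tH^1_x \hookrightarrow L^{10/3}_{t,x}$ (Gagliardo–Nirenberg/Sobolev), produces the superlinear gain $U_k^{1+\beta}$ against the measure of the super-level set, while the geometric factor $b^k$ comes from the derivatives of the cutoffs $\chi_k$. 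Summing the De Giorgi lemma closes the argument.

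The main obstacle I anticipate is establishing every estimate \emph{uniformly in $r\in[0,\infty)$}. For small $r$ the mollification is a harmless perturbation, but for large $r$ the advection velocity $w = u*\phi_r$ samples $u$ on a ball of radius $r$ that dwarfs the unit-scale cylinders in the proposition, so naive local bounds fail; the cure is precisely to route all such terms through $\mathcal{M}(|\nabla u|)$ (and through the mean-zero normalization $w''$, which kills the worst constant mode), and to check that the constants produced by \eqref{young}, by the Maximal-function $L^2$-bound, and by the pressure decomposition do not degenerate as $r\to\infty$. Getting $\bar\delta$ to be a single number valid for all $r$ — rather than an $r$-dependent $\bar\delta_r$ — is the delicate bookkeeping that the authors flag as the heart of Section~\ref{proof_partial_prob_II_r}.
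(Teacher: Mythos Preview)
Your outline has the right architecture (De Giorgi iteration, local energy inequality, pressure decomposition, superlinear recursion), but two concrete ideas from the paper are missing, and without them the argument does not close.

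\textbf{First, the pressure is absorbed into time-dependent truncation levels, not handled as a forcing term.} You propose constant levels $C_k = 1-2^{-k}$ and plan to estimate the nonlocal pressure piece by its smallness in $L^1_tL^1_x$. But the hypothesis only gives $\|P\|_{L^1_tL^1_x}$, and in the De Giorgi step the pressure appears linearly against $v_k$; with only $L^1$ in time there is no H\"older pairing that yields the needed superlinear gain $U_{k-1}^{1+\beta}$. The paper's key device is a \emph{three-way} decomposition $P = P_{1,k} + P_{2,k} + P_3$ (Lemma~\ref{lem_pressure_decomposition}) where $P_3$ is $k$-independent with $\|\nabla P_3\|_{L^1_tL^\infty_x}\le 1/2$, and then the truncation level itself is taken to be
\[
E_k(t) = \tfrac12(1-2^{-k}) + \int_{-1}^{t}\|\nabla P_3(s,\cdot)\|_{L^\infty}\,ds,
\]
so that the identity $\partial_t E_k = \|\nabla P_3\|_{L^\infty}$ exactly cancels the $P_3$ contribution in the energy inequality for $v_k=(|u|-E_k)_+$ (see \eqref{easyp_3} and the line after). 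This is how the paper reaches the endpoint $p=1$ for the pressure; a two-piece local/nonlocal split with constant levels does not achieve it.

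\textbf{Second, you have the $r$-dichotomy backwards, and the mechanism for uniformity is different.} You write that small $r$ is a harmless perturbation and large $r$ is the obstacle. In fact the maximal-function hypothesis, via Corollary~\ref{convolution_cor}, gives $\|w\|_{L^2_tL^\infty_x(B(2))}\le C(1+4/r)^3\|\mathcal M(|\nabla u|)\|_{L^2(Q(2))}$, which is \emph{uniformly bounded for $r\ge s_1$} but blows up like $r^{-3}$ as $r\to 0$. The paper therefore proves \emph{two} De Giorgi lemmas: Lemma~\ref{lem_partial_1} gives $U_k\le \bar C_1^k U_{k-1}^{7/6}$ for $r\ge s_1$ but only $U_k\le r^{-3}\bar C_1^k U_{k-1}^{7/6}$ for $r<s_1$; Lemma~\ref{lem_partial_2} gives the clean recursion for small $r$ but only for the first $k_r$ steps, where $k_r\sim \log_2(1/r)$, by decomposing $w' = (u(1-v_k/|u|))*\phi_r + (u\,v_k/|u|)*\phi_r$ into a bounded piece and a piece controlled by $\|v_k\|$ locally. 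The two are then spliced: for $0<r<s_1$ run Lemma~\ref{lem_partial_2} up to $k_r$, then switch to Lemma~\ref{lem_partial_1}, where now $r^{-3}\le 2^{9(k_r+1)}\le 2^{18k}$ is absorbed into the geometric factor. Your Poincar\'e/maximal-function sketch does not produce a bound on $w$ that is uniform across all $r$, and the ``tails of $\phi_r$ controlled by $\mathcal M(u)$'' heuristic does not survive the $r\to 0$ limit.
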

The above proposition, whose proof will appear in the  section \ref{proof_partial_prob_II_r},
contains two bad scaling terms
$\| u\|_{L_t^{\infty}L_x^{2}}$ and $\|P\|_{L_t^1L_x^{1}}$,
while  the following proposition \ref{local_study_thm} does not have those two. 
Instead, the proposition \ref{local_study_thm} will assume
 the mean-zero property on $u$ with the additional terms.
 We will see later that these additional ones  have
 the best scaling  like $|\nabla u|^2$ (also, see \eqref{best_scaling}).\\ 

\noindent  \textbf{The second local study proposition 
for  (Problem II-r)}\\

\begin{prop}\label{local_study_thm}
There exists  a  $\bar{\eta}>0$ and there exist
  constants $C_{d,\alpha}$ depending only on $d$ and $\alpha$
 with the following property:\\

If $u$ is a solution of (Problem II-r) for some $0\leq r<\infty$ verifying both            
\begin{equation}\label{local_study_condition1}
                 \int_{\mathbb{R}^3}\phi(x)u(t,x)dx = 0
\quad \mbox{ for } t\in(-4,0) \mbox{ and}\\
\end{equation}
\begin{equation}\begin{split}\label{local_study_condition2}
&\int_{-4}^{0}\int_{B(2)}\Big(|\nabla u|^2(t,x)+
|\nabla^2 P|(t,x)+|\mathcal{M}(|\nabla u|)|^2(t,x)\Big)dxdt\leq \bar{\eta},\\
               \end{split}\end{equation}

 then $|\nabla^d u|\leq C_{d,0}$ on 
$Q(\frac{1}{3})=(-(\frac{1}{3})^2,0)\times B(\frac{1}{3})$ for every integer $d\geq 0$.\\

Moreover if we assume further 
\begin{equation}\begin{split}\label{local_study_condition3}
&\int_{-4}^{0}\int_{B(2)}
\Big(
|\mathcal{M}(\mathcal{M}(|\nabla u|))|^2+
|\mathcal{M}(|\mathcal{M}(|\nabla u|)|^q)|^{2/q}\\+
&|\mathcal{M}(|\nabla u|^q)|^{2/q}+
\sum_{m=d}^{d+4} \sup_{\delta>0}(|(\nabla^{m-1}{h^{\alpha})_\delta}
*\nabla^2 P|)
\Big)dxdt\leq \bar{\eta}
               \end{split}\end{equation} 
for some integer $d\geq 1$ and  for some real $\alpha\in(0,2)$
where $q=12/(\alpha+6)$,
 then $|(-\Delta)^{\frac{\alpha}{2}}\nabla^d u|\leq C_{d,\alpha}$
on $Q(\frac{1}{6})$ for such $(d,\alpha)$.

\end{prop}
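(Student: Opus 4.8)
The plan is to run the De Giorgi iteration / energy method twice in a bootstrap, first on the velocity $u$ and then on successive (fractional) derivatives, always working with the equation of (Problem II-r) where the advection velocity $w$ has mean zero with respect to $\phi$.  The starting point is Proposition~\ref{partial_problem_II_r}: under the smallness \eqref{local_study_condition2}, after absorbing the mean-zero hypothesis \eqref{local_study_condition1} and the new pressure decomposition (Lemma~\ref{lem_pressure_decomposition}) one gets the three quantities $\|u\|_{L^\infty_t L^2_x}$, $\|P\|_{L^1_t L^1_x}$, $\|\nabla u\|_{L^2_t L^2_x}$ small on a slightly smaller cylinder, hence $|u|\le 1$ on $[-3/2,0]\times B(1/2)$ by that proposition.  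The role of the maximal-function term $\|\mathcal M(|\nabla u|)\|_{L^2_t L^2_x(B(2))}$ is exactly to control the truncation of the non-local contribution of $w$ (the convolution $u*\phi_r$ reaches outside $B(2)$) uniformly in $r\in[0,\infty)$; this uniformity is the technical crux.  Once $u$ is bounded, I differentiate \eqref{navier_Problem II-r}: each $\nabla^m u$ solves a linear parabolic equation $\partial_t v + (w\cdot\nabla)v - \Delta v = F_m$ with $F_m$ a finite sum of lower-order terms $\nabla^{k}w\cdot\nabla^{m-k+1}u$ and $\nabla^{m}\nabla P$.  By the $L^1(\mathcal H)$ bound \eqref{pressure_hardy} and property \eqref{hardy_property}, together with the already-controlled lower-order derivatives, I can run a Caccioppoli/De Giorgi estimate on each shrinking cylinder $Q(r_m)$ with $r_m\downarrow 1/3$, obtaining $|\nabla^d u|\le C_{d,0}$ on $Q(1/3)$ for every $d$.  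The maximal-of-maximal and $L^q$-maximal terms in \eqref{local_study_condition3} are introduced at this stage to dominate the non-local tail of $w=(u*\phi_r)-\int\phi\,(u*\phi_r)$ after one differentiation — differentiating the advection term produces $(\nabla^k w)\cdot(\cdot)$ whose far-field part, once truncated against $\zeta$-cutoffs at dyadic scales $2^j$, is controlled by $\mathcal M(\mathcal M(|\nabla u|))$ and $\mathcal M(|\nabla u|^q)^{1/q}$ uniformly in $r$.

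For the fractional part I represent $(-\Delta)^{\alpha/2}\nabla^d u$ using the singular-integral form \eqref{fractional_integral} split into a near-field and a far-field piece via the partition of unity $\sum_j \zeta(x/2^j)=1$ from \eqref{property_zeta}.  The near-field piece ($|x-y|\lesssim 1$) is handled by the already-established classical bounds $|\nabla^m u|\le C_{m,0}$ on $Q(1/3)$ for $m=d,\dots,d+4$ (a second-order Taylor expansion absorbs the $|x-y|^{-3-\alpha}$ singularity because $\alpha<2$).  The far-field piece is the genuinely new difficulty: it is $\sum_{j\ge 0} (2^j)^{-\alpha}\big((h^\alpha)_{2^j} * \nabla^d u\big)(x)$ by \eqref{property_h}, and since the sum is geometric in $(2^j)^{-\alpha}$ it converges provided each term is bounded on $Q(1/6)$ by something summable.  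I bound $(h^\alpha)_{2^j}*\nabla^d u$ by a convolution estimate against $u$ or $\nabla^2 P$ after integrating by parts: on the region $|x-y|\sim 2^j$ with $2^j$ large, $\nabla^d u$ is expressed through the equation as $\nabla^{d-2}$ of $\partial_t u + (w\cdot\nabla)u + \nabla P$, and the non-local transport term again produces maximal functions of $\nabla u$ — this is why \eqref{local_study_condition3} carries $\mathcal M(\mathcal M(|\nabla u|))$ and the mixed $L^q$-maximal quantities.  The pressure contribution to the far-field is controlled precisely by the term $\sum_{m=d}^{d+4}\sup_{\delta>0}|(\nabla^{m-1}h^\alpha)_\delta * \nabla^2 P|$ appearing in \eqref{local_study_condition3}: this is the "Hardy-space surrogate of the maximal function," and by \eqref{hardy_property} its $L^1$ norm is bounded by $\|\nabla^2 P\|_{\mathcal H}$, so it is a legitimate small quantity.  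Assembling the near-field bound, the far-field geometric sum, and the pressure piece gives $|(-\Delta)^{\alpha/2}\nabla^d u|\le C_{d,\alpha}$ on $Q(1/6)$.

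The main obstacle, as the authors themselves flag, is keeping $\bar\eta$ (and $\bar\delta$) independent of $r\in[0,\infty)$ throughout.  Every time the advection velocity $w=(u*\phi_r)-\int\phi\,(u*\phi_r)$ is differentiated or localized, one must estimate $u*\phi_r$ (and its derivatives) on a ball by quantities intrinsic to $u$ on a fixed larger ball, uniformly as $r\to\infty$; the naive bound $\|u*\phi_r\|_{L^p(B(a))}\le\|u\|_{L^p(B(a+r))}$ from \eqref{young} is useless for large $r$, and this is exactly where the maximal functions (and their iterates and $L^q$-variants) are indispensable, since $(u*\phi_r)(x)$ and $\nabla(u*\phi_r)(x)$ are pointwise dominated by $\mathcal M(u)(x)$ and $\mathcal M(\nabla u)(x)$ respectively with constants independent of $r$.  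Concretely I would organize the $r$-uniformity into a single lemma: for each fixed truncation scale in the De Giorgi scheme, the contribution of $w$ to the energy inequality is bounded by a fixed multiple of the quantities in \eqref{local_study_condition2}–\eqref{local_study_condition3}, with all multiplicative constants depending only on $\phi$, $d$, $\alpha$; granting that lemma, the rest is the standard De Giorgi descent plus the geometric summation of the fractional far-field, and the constants $C_{d,\alpha}$ come out depending only on $d$ and $\alpha$ as claimed.
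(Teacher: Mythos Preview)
Your three-stage outline matches the paper's, and the near-field Taylor argument plus the role of the Hardy-space surrogate for the pressure tail are correctly identified. But Stage~2 has a real gap. The paper does not run De Giorgi on $\nabla^m u$; it uses linear parabolic regularization (Lemma~\ref{lem_parabolic_v_1_v_2_pressure}) together with an explicit split into \emph{small $r$} and \emph{large $r$}. For small $r$, \eqref{young} lets $\|w\|_{W^{k,p}}$ on $\bar B_m$ be dominated by $\|u\|_{W^{k,p}}$ on $\bar B_{m-1}$ and a direct bootstrap runs. For large $r$, however, Corollary~\ref{convolution_cor} gives only $w\in L^2_t W^{n,\infty}_x$ --- bounded in space but merely $L^2$ in time --- and a standard bootstrap to $L^\infty_t$ breaks down because you cannot absorb an $L^2_t$ drift into an $L^\infty_t$ estimate. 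The paper's repair is the half-step lemma (Lemma~\ref{lem_a_half_upgrading_large_r}): an $L^1$-type energy identity that raises the spatial integrability of $\nabla^n u$ from $L^p$ to $L^{p+1/2}$ while consuming only $\|w\|_{L^2_t W^{n,\infty}_x}$, iterated finitely many times. Your proposed ``single lemma for $r$-uniformity'' hides precisely this dichotomy; without the half-step device the large-$r$ branch does not close. Note also that \eqref{local_study_condition3} is used \emph{only} in Stage~3, never in Stage~2.

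On Stage~3 your far-field mechanism is slightly mis-described: one does not ``express $\nabla^d u$ as $\nabla^{d-2}$ of the equation.'' Instead the paper convolves \eqref{navier_Problem II-r} with $\nabla^{d-1}[(h^\alpha)_M]$, multiplies by the sign of the result times a cutoff $\Phi$, and integrates in space-time to obtain an $L^\infty_t L^1_x$ bound for $(h^\alpha)_M*\nabla^d u$. The transport contribution becomes $\|\nabla^d[(h^\alpha)_M]*((w\cdot\nabla)u)\|_{L^1}$, bounded by $CM^{-3-d}\|w\|_{L^2_t L^{q'}(B(2M+2))}\|\nabla u\|_{L^2_t L^q(B(2M+2))}$; this is exactly where Lemma~\ref{lem_Maximal 2.5 or 4} and the maximal-of-maximal terms in \eqref{local_study_condition3} enter, yielding the decay $\|(h^\alpha)_M*\nabla^d u\|_{L^\infty_t L^1_x}\le CM^{1-d}$ (not just a uniform bound). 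One then repeats for $m=d,\dots,d+4$ and upgrades $L^\infty_t L^1_x$ to $L^\infty_{t,x}$ on $Q(1/6)$ by Sobolev.
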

\begin{rem}
For the  definitions of $h^{\alpha}$ and  $(\nabla^{m-1}{h^{\alpha})_\delta}$, see around \eqref{property_h}.
\end{rem}
The proof will be given in the  section \ref{proof_local study} 
which will use the conclusion of the previous proposition \ref{partial_problem_II_r}.
Moreover we will use an induction argument and the integral representation 
of the fractional Laplacian in order to get estimates
for the fractional case.
The Maximal function term of \eqref{local_study_condition2}
is introduced to estimate non-local part of $u$ while
the Maximal of Maximal function terms of \eqref{local_study_condition3}
is to estimate non-local part of $w$ which is already non-local.
On the other hand, because $\nabla^2 P$ has only $L^1$ integrability,
we can not have  $L^1$ Maximal function of $\nabla^2 P$. 
Instead, we use 
 some integrable functions, which is the last term of \eqref{local_study_condition3}.
 This term plays the role which captures
non-local information of pressure (see \eqref{hardy_property}).
These will be stated 
clearly in  sections \ref{proof_local study} and \ref{proof_main_thm_II}.\\

\section{Proof of  the first local study proposition \ref{partial_problem_II_r}  }\label{proof_partial_prob_II_r}

\qquad This section is devoted to prove the proposition \ref{partial_problem_II_r} which is 
a 
partial regularity theorem for (Problem II-r).
Remember that we are looking for $\bar\delta$ which must
 be independent of $r$. \\
 
 In the first subsection \ref{definition_for_thoerem_partial_problem_II_r},
 we  present some lemmas about the advection velocity $w$ and 
 a new pressure 
 decomposition. After that, two big lemmas \ref{lem_partial_1}
and \ref{lem_partial_2}
in the subsections 
\ref{proof_lem_partial_1} and \ref{proof_lem_partial_2}, which give us a control for big $r$ and small $r$ respectively, follow. Then 
the actual proof of the proposition
\ref{partial_problem_II_r} will appear in the last subsection \ref{combine_de_giorgi} 
where we can combine those two big lemmas.\\

\subsection{A control on the
 advection velocity $w$ and a new pressure decomposition}\label{definition_for_thoerem_partial_problem_II_r}

\quad The following lemma 
says that
convolution of any functions with $\phi_r$
can be controlled by just one  point value of
the Maximal function with some factor of $1/r$.
Of course,  it is useful when $r$ is away from $0$.
\begin{lem}\label{convolution_lem}
 Let  f be an integrable function in $\mathbb{R}^3$. 
Then for any integer $d\geq0$,
there exists $C=C(d)$ such that 
\begin{equation*}
 \|\nabla^d (f*\phi_r)\|_{L^{\infty}(B(2))}\leq \frac{C}{r^d}
\cdot(1+\frac{4}{r})^3\cdot\inf_{x\in B(2)}\mathcal{M}f(x)
\end{equation*} for any $0<r<\infty$.
 
\end{lem}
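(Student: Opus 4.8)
The plan is to estimate $\nabla^d(f*\phi_r)$ pointwise on $B(2)$ by writing the derivative on the smooth mollifier and then bounding the resulting integral by an average of $|f|$ over a ball, which is controlled by any point value of the maximal function on $B(2)$. First I would use that $\phi_r(x) = r^{-3}\phi(x/r)$ is smooth, so for $x \in B(2)$,
\begin{equation*}
\nabla^d(f*\phi_r)(x) = \int_{\mathbb{R}^3} f(y)\,(\nabla^d\phi_r)(x-y)\,dy,
\qquad (\nabla^d\phi_r)(z) = r^{-3-d}(\nabla^d\phi)(z/r).
\end{equation*}
Since $\operatorname{supp}(\phi)\subset B(1)$, we have $\operatorname{supp}(\nabla^d\phi_r)\subset B(r)$, so the integral only sees $y$ with $|x-y|<r$, i.e. $y\in B(x;r)$. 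Using $\|\nabla^d\phi\|_{L^\infty}\le C(d)$, I bound
\begin{equation*}
|\nabla^d(f*\phi_r)(x)| \le C(d)\,r^{-3-d}\int_{B(x;r)}|f(y)|\,dy
= C(d)\,r^{-d}\cdot\frac{1}{|B(r)|}\int_{B(x;r)}|f(y)|\,dy \cdot \frac{|B(r)|}{r^3},
\end{equation*}
and since $|B(r)|/r^3$ is a dimensional constant absorbed into $C(d)$, the average $\frac{1}{|B(r)|}\int_{B(x;r)}|f|$ is exactly of the form appearing in the definition of $\mathcal{M}f$.

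The remaining point is to pass from the value at $x\in B(2)$ to $\inf_{x\in B(2)}\mathcal{M}f$. The average over $B(x;r)$ is bounded by $\mathcal{M}f(x)$ for each $x$, but I want a bound by $\mathcal{M}f(x_0)$ for an \emph{arbitrary} $x_0\in B(2)$. For this I would enlarge the ball: if $x, x_0 \in B(2)$ then $|x-x_0| < 4$, so $B(x;r)\subset B(x_0;r+4)$, hence
\begin{equation*}
\frac{1}{|B(r)|}\int_{B(x;r)}|f(y)|\,dy
\le \frac{|B(r+4)|}{|B(r)|}\cdot\frac{1}{|B(r+4)|}\int_{B(x_0;r+4)}|f(y)|\,dy
\le \left(\frac{r+4}{r}\right)^3 \mathcal{M}f(x_0)
= \left(1+\frac{4}{r}\right)^3 \mathcal{M}f(x_0).
\end{equation*}
Taking the infimum over $x_0\in B(2)$ and the supremum over $x\in B(2)$ on the left gives the claimed inequality with $C = C(d)$.

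This argument is essentially routine; the only mild subtlety — and the one step worth stating carefully — is the geometric inclusion $B(x;r)\subset B(x_0;r+4)$ that produces the factor $(1+4/r)^3$, which is what allows the maximal function to be evaluated at a point far from the support of the convolution. No deeper obstacle is expected here.
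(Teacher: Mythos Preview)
Your proof is correct and follows essentially the same approach as the paper's own proof: write the derivative on the mollifier, use the support to restrict the integral to $B(x;r)$, then enlarge to $B(x_0;r+4)$ via the inclusion for $x,x_0\in B(2)$ to produce the $(1+4/r)^3$ factor and the maximal function at an arbitrary point. The only differences are notational.
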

\begin{proof}
 Let $z,x\in B(2)$. Then
\begin{equation*}\begin{split}
 & |\nabla^d (f*\phi_r)(z)| =|(f*\nabla^d \phi_r)(z)|
 =|\int_{B(z,r)}f(y)\nabla^d\phi_r(z-y)dy|\\
&\leq\|\nabla^d \phi_r\|_{L^{\infty}}\int_{B(z,r)}|f(y)|dy
=\frac{\|\nabla^d \phi\|_{L^{\infty}}}{r^{d+3}}\int_{B(z,r)}|f(y)|dy  \\
&\leq\frac{\|\nabla^d \phi\|_{L^{\infty}}}{r^{d+3}}\frac{(r+4)^3}{(r+4)^3}
\int_{B(x,r+4)}|f(y)|dy  
\leq\frac{C}{r^d}
\cdot(1+\frac{4}{r})^3\cdot\mathcal{M}f(x).
\end{split}\end{equation*}

We used $B(z,r)\subset B(x,r+4)$. Then we take $\sup$ in $z$ and $\inf$ in $x$.
Recall that $\phi(\cdot)$ is the fixed function in this paper.\\
\end{proof}

The following corollary is just an application of the previous lemma
to solutions of (Problem II-r).
\begin{cor}\label{convolution_cor}
 Let $u$ be a solution of (Problem II-r) for $0<r<\infty$. Then 
 for any integer $d\geq0$,
there exists $C=C(d)$ such that
\begin{equation*}
 \| w\|_{L^2(-4,0;L^{\infty}(B(2)))}\leq {C}
\cdot(1+\frac{4}{r})^3\cdot\|\mathcal{M}(|\nabla u|)\|_{L^2{(Q(2))}}
\end{equation*} and 
\begin{equation*}
 \|\nabla^d w\|_{L^2(-4,0;L^{\infty}(B(2)))}\leq \frac{C}{r^{d-1}}
\cdot(1+\frac{4}{r})^3\cdot\|\mathcal{M}(|\nabla u|)\|_{L^2{(Q(2))}}
\end{equation*} if $d\geq1$.
\end{cor}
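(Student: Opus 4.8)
The plan is to reduce both inequalities to Lemma~\ref{convolution_lem}, applied at each fixed time $t\in(-4,0)$, together with one ``gain a derivative'' manipulation and one Poincar\'e-type step.

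First fix $d\geq 1$. Since $w''(t)$ is independent of $x$, we have $\nabla^d w(t,\cdot)=\nabla^d w'(t,\cdot)=\nabla^d\big(u(t,\cdot)*\phi_r\big)=\nabla^{d-1}\big((\nabla u)(t,\cdot)*\phi_r\big)$, where the last identity just moves one derivative off $\phi_r$ onto $u$ (integration by parts, no boundary term since $\phi_r$ is compactly supported). Applying Lemma~\ref{convolution_lem} with $f=(\nabla u)(t,\cdot)$ and with its ``$d$'' replaced by $d-1$, and recalling that $\mathcal{M}$ acts on $|\cdot|$ by definition, gives for a.e.\ $t$
\[
\|\nabla^d w(t,\cdot)\|_{L^\infty(B(2))}\leq \frac{C}{r^{d-1}}\Big(1+\frac{4}{r}\Big)^3\inf_{x\in B(2)}\mathcal{M}\big(|\nabla u|\big)(t,x).
\]
For the undifferentiated quantity $w$ I would use the mean-zero structure: since $\int\phi=1$,
\[
w(t,x)=\int_{\mathbb{R}^3}\phi(y)\big[(u*\phi_r)(t,x)-(u*\phi_r)(t,y)\big]\,dy ,
\]
and for $x\in B(2)$ and $y$ in the support of $\phi$ (contained in $B(1)$) the segment from $y$ to $x$ stays in $B(2)$, so the fundamental theorem of calculus yields $|w(t,x)|\leq 3\,\|\nabla w(t,\cdot)\|_{L^\infty(B(2))}$; inserting the previous display with $d=1$ gives the same kind of pointwise-in-time bound, now with the harmless prefactor $C(1+4/r)^3$ (no negative power of $r$).

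It then remains to pass from these $L^\infty(B(2))$-in-space bounds to $L^2(-4,0;L^\infty(B(2)))$. Squaring, integrating over $t\in(-4,0)$, and using that for a nonnegative measurable $g$ one has $\big(\inf_{x\in B(2)}g(x)\big)^2\leq |B(2)|^{-1}\int_{B(2)}g(x)^2\,dx$, the infimum is replaced by $\|\mathcal{M}(|\nabla u|)\|_{L^2(Q(2))}^2$ because $Q(2)=(-4,0)\times B(2)$; absorbing $|B(2)|^{-1/2}$ into $C$ and taking square roots gives both stated inequalities. I do not expect a real obstacle here; the only point worth care is the $d=0$ case, where one must genuinely use $w=w'-w''$ with $w''$ the $\phi$-average of $w'$ rather than naively ``differentiating zero times'', and one should keep track that the connecting segments lie in $B(2)$ and that the $\inf$-to-$L^2$-average inequality is applied correctly. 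If $\|\mathcal{M}(|\nabla u|)\|_{L^2(Q(2))}=\infty$ both claims are vacuous, so one may assume it is finite.
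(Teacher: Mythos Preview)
Your proposal is correct and follows essentially the same route as the paper: for $d\geq 1$ you use $\nabla^d w=\nabla^{d-1}\big((\nabla u)*\phi_r\big)$ and apply Lemma~\ref{convolution_lem} with $f=\nabla u$, and for $d=0$ you use the mean-zero property of $w$ together with a Poincar\'e/mean-value step to reduce to the $d=1$ bound, exactly as the paper does. Your explicit passage from the pointwise $\inf_{x\in B(2)}$ bound to the $L^2(Q(2))$ norm via averaging is the step the paper leaves implicit; everything else matches.
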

\begin{proof}
Recall $\int_{\mathbb{R}^3}w(t,y)\phi(y)dy=0$ and $supp(\phi)\subset B(1)$. Thus for $z\in B(2)$
\begin{equation*}\begin{split}
|w(t,z)|=&\Big|\int_{\mathbb{R}^3}w(t,z)\phi(y)dy-\int_{\mathbb{R}^3}w(t,y)
\phi(y)dy\Big|\\
\leq&\|\nabla w(t,\cdot)\|_{L^{\infty}(B(2))}\int_{\mathbb{R}^3}|z-y|\phi(y)dy\\
\leq&C\|(\nabla u) *\phi_r(t,\cdot)\|_{L^{\infty}(B(2))}\cdot\int_{\mathbb{R}^3}\phi(y)dy\\
\leq&{C}\cdot(1+\frac{4}{r})^3\cdot \inf_{x\in B(2)}\mathcal{M}(|\nabla u|)(t,x).
\end{split}\end{equation*}
For last inequality, we used the lemma \ref{convolution_lem} to $\nabla u$.
For $d\geq1$, use $\nabla^d w =\nabla^{d-1}\Big[(\nabla u)*\phi_r\Big]$.

\end{proof}

To use De Giorgi type argument, we require more notations
which will be used only in this section.
\begin{equation}\begin{split}\label{def_s_k}
\mbox{For  real }&k\geq 0, \mbox{ define }\\
 B_k &=\mbox{ the ball in } \mathbb{R}^3 \mbox{ centered at the origin with radius }
\frac{1}{2}(1+\frac{1}{2^{3k}}),\\
T_k &= -\frac{1}{2}(3+\frac{1}{2^k}),\\
Q_k &=[T_k,0]\times B_k \quad\mbox{ and}\\
s_k &= \mbox{ distance between } B^c_{k-1} \mbox{ and } B_{k-\frac{5}{6}}\\
 &=2^{-3k}\Big((\sqrt{2}-1)2\sqrt{2}\Big).
\end{split}\end{equation} Also we define $s_\infty=0$.
Note that $0<s_1 <\frac{1}{4}$ and the sequence $\{s_k\}_{k=1}^{\infty}$ is strictly decreasing to zero
as k goes to $\infty$.\\

\noindent For each integer $k\geq 0 $, we define and fix a function $\psi_k \in C^{\infty}(\mathbb{R}^3)$ satisfying:\\
\begin{equation}\begin{split}
&\psi_k = 1 \quad\mbox{ in } B_{k-\frac{2}{3}} , \quad\psi_k = 0 \quad\mbox{ in } B_{k-\frac{5}{6}}^C\\
&0\leq \psi_k(x) \leq 1 , \quad
|\nabla\psi_k(x)|\leq C2^{3k}  \mbox{ and }
 |{\nabla}^2\psi_k(x)|\leq C2^{6k} \mbox{ for }x\in\mathbb{R}^3.
\end{split}\end{equation} This  $\psi_k$ plays role of a cut-off function for $B_k$.\\

To prove the proposition \ref{partial_problem_II_r}, We need the following
 important lemma about pressure decomposition. Here
we decompose our pressure term into three parts: 
a non-local part which depends on $k$, 
a local part which depends on $k$ and
a non-local part which does not depend on $k$
and will be absorbed into the velocity component later.

\begin{lem}\label{lem_pressure_decomposition}
There exists a  constant ${\Lambda_1}>0$ with 
the following property:\\
Suppose $A_{ij}\in L^1(B_0) $  $ 1\leq i,j\leq 3 $ 
and $ P\in L^1(B_0)$ 
with $-\Delta P = \sum_{ij}\partial_i \partial_j A_{ij}$ in $B_0$.
Then, there exist a function $P_3 $ with $P_3|_{B_{\frac{2}{3}}} \in L^{\infty} $ 
such that, for any $k\geq1$, we can decompose $P$ by\\
\begin{equation}\label{pressure_decomposition_expressition}
P = P_{1,k}  + P_{2,k} + P_{3}\quad\mbox{     in } B_{\frac{1}{3}},
\end{equation}
and they satisfy
\begin{equation}\label{lem_pressure_decomposition_p1k}  \|\nabla P_{1,k}\|_{L^{\infty}(B_{k-\frac{1}{3}}))} 
   + \|P_{1,k}\|_{L^{\infty}(B_{k-\frac{1}{3}}))}  
\leq {\Lambda_1}
  2^{12k} \sum_{ij}\|A_{ij}\|_{L^1(B_{\frac{1}{6}})},
\end{equation}
\begin{equation}\label{lem_pressure_decomposition_p2k} -\Delta P_{2,k} = 
\sum_{ij}\partial_i \partial_j (\psi_k A_{ij}) \quad\quad \mbox{in } \mathbb{R}^3 \quad\mbox{ and}
\end{equation}
\begin{equation}\label{lem_pressure_decomposition_p3} \|\nabla P_{3}\|_{L^{\infty}(B_{\frac{2}{3}})} \leq {\Lambda_1}
 (\|P\|_{L^1(B_{\frac{1}{6}})} + \sum_{ij}\|A_{ij}\|_{L^1(B_{\frac{1}{6}})}).
\end{equation}
Note that ${\Lambda_1}$ is a totally independent constant. \\
\end{lem}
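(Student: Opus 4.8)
The decomposition will be written down explicitly from the Newtonian potential, keeping the genuinely non‑local (weak‑$L^1$) part inside $P_{2,k}$ and reading the two bounded pieces off from regions disjoint from its singular support. Let $N$ denote the fundamental solution of $-\Delta$ on $\mathbb{R}^3$, so $-\Delta(N*g)=g$, and recall the fixed cut‑offs $\psi_k$: $\psi_1\equiv1$ on $B_{\frac{1}{3}}$, $\mathrm{supp}\,\psi_1\subset B_{\frac{1}{6}}$, and $B_{k-\frac{5}{6}},B_{k-\frac{2}{3}}\subseteq B_{\frac{1}{6}}\subset B_0$ for every $k\ge1$. I would set
\begin{equation*}
P_{2,k}:=\sum_{ij}\partial_i\partial_j\big(N*(\psi_kA_{ij})\big),\qquad
P_{1,k}:=\sum_{ij}\partial_i\partial_j\big(N*((\psi_1-\psi_k)A_{ij})\big),
\end{equation*}
and $P_3:=P-\sum_{ij}\partial_i\partial_j(N*(\psi_1A_{ij}))$. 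Each $\psi_kA_{ij}\in L^1(\mathbb{R}^3)$ is compactly supported in $B_{\frac{1}{6}}$, so all potentials are well defined (as a.e.\ functions, the double‑Riesz parts only in weak‑$L^1$), and by construction $-\Delta P_{2,k}=\sum_{ij}\partial_i\partial_j(\psi_kA_{ij})$ on $\mathbb{R}^3$, i.e.\ \eqref{lem_pressure_decomposition_p2k}. Linearity of $g\mapsto\partial_i\partial_j(N*g)$ then gives $P_{1,k}+P_{2,k}+P_3=P$ in $\mathcal{D}'(B_{\frac{1}{3}})$, which is \eqref{pressure_decomposition_expressition} (note $P_{1,1}=0$).

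For \eqref{lem_pressure_decomposition_p3} I would exploit harmonicity of $P_3$. Since $1-\psi_1$ vanishes on $B_{\frac{1}{3}}$, $-\Delta P_3=\sum_{ij}\partial_i\partial_j((1-\psi_1)A_{ij})=0$ there, so $P_3$ is harmonic, hence $C^\infty$, on $B_{\frac{1}{3}}$; in particular $P_3|_{B_{\frac{2}{3}}}\in L^\infty$ because $\overline{B_{\frac{2}{3}}}\subset B_{\frac{1}{3}}$. Fix a radial $\Phi\in C_c^\infty(B(1))$ with $\int\Phi=1$ and a \emph{fixed} small $\rho>0$ (say $\rho=1/16$) so that every $B(x,\rho)$ with $x\in B_{\frac{2}{3}}$ lies in $B_{\frac{1}{3}}$. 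The mean value property for harmonic functions gives $P_3=P_3*\Phi_\rho$ on $B_{\frac{2}{3}}$, with $\Phi_\rho(\cdot)=\rho^{-3}\Phi(\cdot/\rho)$. Since convolution commutes with $N*(\cdot)$ and with $\partial_i\partial_j$,
\begin{equation*}
P_3=P*\Phi_\rho-\sum_{ij}(\psi_1A_{ij})*K_{ij}\quad\text{on }B_{\frac{2}{3}},\qquad K_{ij}:=\partial_i\partial_j(N*\Phi_\rho),
\end{equation*}
and the point is that each $K_{ij}$ is a \emph{fixed} bounded $C^\infty$ function with $\|K_{ij}\|_{L^\infty}+\|\nabla K_{ij}\|_{L^\infty}\le C(\Phi,\rho)$. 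Using $\|\psi_1A_{ij}\|_{L^1(\mathbb{R}^3)}\le\|A_{ij}\|_{L^1(B_{\frac{1}{6}})}$, the bounds on $\Phi_\rho,\nabla\Phi_\rho$, and $B(x,\rho)\subset B_{\frac{1}{6}}$ for $x\in B_{\frac{2}{3}}$, applying $\nabla$ to the identity yields $\|P_3\|_{L^\infty(B_{\frac{2}{3}})}+\|\nabla P_3\|_{L^\infty(B_{\frac{2}{3}})}\le\Lambda_1(\|P\|_{L^1(B_{\frac{1}{6}})}+\sum_{ij}\|A_{ij}\|_{L^1(B_{\frac{1}{6}})})$ with $\Lambda_1=\Lambda_1(\Phi,\rho)$, which is \eqref{lem_pressure_decomposition_p3}. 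The detour through $\Phi_\rho$ is essential: $\sum_{ij}\partial_i\partial_j(N*(\psi_1A_{ij}))$ is itself only weak‑$L^1$, so it must never be estimated pointwise; only the harmonic remainder is.

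For \eqref{lem_pressure_decomposition_p1k} I would use locality of support. Both $\psi_1$ and $\psi_k$ equal $1$ on $B_{k-\frac{2}{3}}$ and vanish outside $B_{\frac{1}{6}}$, so $(\psi_1-\psi_k)A_{ij}$ is supported in $B_{\frac{1}{6}}\setminus B_{k-\frac{2}{3}}$, whose distance to $B_{k-\frac{1}{3}}$ equals the difference of the two radii, namely $2^{-3k}$. Hence for $x\in B_{k-\frac{1}{3}}$ the representation $P_{1,k}(x)=\sum_{ij}\int(\partial_i\partial_jN)(x-y)(\psi_1-\psi_k)(y)A_{ij}(y)\,dy$ is an honest absolutely convergent integral over a region where $|(\partial_i\partial_jN)(x-y)|\le C|x-y|^{-3}\le C2^{9k}$ and $|\nabla_x(\partial_i\partial_jN)(x-y)|\le C|x-y|^{-4}\le C2^{12k}$; with $\|(\psi_1-\psi_k)A_{ij}\|_{L^1}\le\|A_{ij}\|_{L^1(B_{\frac{1}{6}})}$ this gives $\|P_{1,k}\|_{L^\infty(B_{k-\frac{1}{3}})}\le C2^{9k}\sum_{ij}\|A_{ij}\|_{L^1(B_{\frac{1}{6}})}$ and $\|\nabla P_{1,k}\|_{L^\infty(B_{k-\frac{1}{3}})}\le C2^{12k}\sum_{ij}\|A_{ij}\|_{L^1(B_{\frac{1}{6}})}$, i.e.\ \eqref{lem_pressure_decomposition_p1k}. (Equivalently, $P_{1,k}$ is harmonic on $B_{k-\frac{2}{3}}$ and one repeats the mean value argument with $\rho_k\approx2^{-3k}$, which is where the exponent $2^{12k}$ — four powers of $2^{3k}$, one for each derivative in the gradient bound — comes from.)

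The only real obstacle is the low integrability of $A_{ij}$: with merely $A_{ij}\in L^1(B_0)$ the ``local pressure'' $\sum_{ij}\partial_i\partial_j(N*(\psi_kA_{ij}))$ need not be locally integrable, so the decomposition must be understood distributionally and the two bounded pieces can only be recovered on sets disjoint from its singular support — exactly what the mean value trick (for $P_3$) and the off‑support kernel estimates (for $P_{1,k}$) are designed to do. Everything else is bookkeeping; in particular $\Lambda_1$ involves only the fixed objects $\Phi,\rho,\psi_1$ and universal pointwise bounds on derivatives of $N$, hence is independent of $A_{ij}$, $P$ and $k$ (and trivially of $r$, which does not appear in this lemma).
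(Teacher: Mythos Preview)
Your proof is correct, and for $P_{1,k}$ and $P_{2,k}$ it coincides with the paper's: both define $P_{2,k}$ by the Riesz transform of $\psi_kA_{ij}$, set $P_{1,k}$ to be the corresponding contribution of $(\psi_1-\psi_k)A_{ij}$, and get \eqref{lem_pressure_decomposition_p1k} from the off-diagonal decay of $\nabla^m N$ together with $\mathrm{dist}(B_{k-\frac13},\,\mathrm{supp}(\psi_1-\psi_k))\gtrsim 2^{-3k}$.

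The treatment of $P_3$ is genuinely different. The paper expands $-\Delta(\psi_1 P)$ by the product rule, so that $-\Delta P_3$ consists of the explicit commutator terms carrying $\nabla\psi_1$ or $\Delta\psi_1$; since these are supported in $B_{\frac16}\setminus B_{\frac13}$, one reads off $\|\nabla P_3\|_{L^\infty(B_{\frac23})}$ from the Newtonian representation by moving all derivatives onto the kernel (exactly as for $P_{1,k}$). You instead define $P_3$ as the remainder $P-\sum_{ij}\partial_i\partial_j\big(N*(\psi_1A_{ij})\big)$, observe directly that it is harmonic on $B_{\frac13}$ because $(1-\psi_1)A_{ij}$ vanishes there, and then use the mean value property with a fixed radial mollifier to replace the singular kernel $\partial_i\partial_j N$ by the smooth $K_{ij}=\partial_i\partial_j(N*\Phi_\rho)$. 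Your route sidesteps the commutator bookkeeping and makes the decomposition identity $P=P_{1,k}+P_{2,k}+P_3$ automatic (rather than arguing via $\Delta P=\Delta(\psi_1 P)$ on $B_{\frac13}$, which in the paper still leaves an implicit ``same Laplacian $\Rightarrow$ equal'' step). The paper's route, on the other hand, keeps everything at the level of explicit kernels and requires no mollifier or mean value property. Both yield the same constants up to fixed factors and the same dependence $2^{12k}$; your closing remark about why the weak-$L^1$ piece must never be estimated pointwise is exactly the point of the lemma.
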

\begin{proof}
The product rule and the hypothesis imply
\begin{equation*}\begin{split}   
-\Delta(\psi_1 P) &= -\psi_1 \Delta P - 2\ebdiv((\nabla \psi_1)P) + P\Delta\psi_1\\
 &= \psi_1\sum_{ij}\partial_i \partial_j A_{ij}  - 2\ebdiv((\nabla \psi_1)P) + P\Delta\psi_1\\
&= - \Delta P_{1,k} - \Delta P_{2,k} - \Delta P_3
\end{split}
\end{equation*}
where  $P_{1,k}$, $ P_{2,k} $ and $ P_3 $ are defined by
\begin{equation*}\begin{split} 
 - \Delta P_{1,k} &= \sum_{ij}\partial_i \partial_j ((\psi_1 -\psi_k) A_{ij}) \\
- \Delta P_{2,k} &=  \sum_{ij}\partial_i \partial_j (\psi_k A_{ij}) \\
 - \Delta P_3\mbox{ } &= - \sum_{ij}\partial_j[(\partial_i \psi_1)(A_{ij})]
  - \sum_{ij}\partial_i[(\partial_j \psi_1)(A_{ij})] \\&+ \sum_{ij}(\partial_i \partial_j \psi_1)(A_{ij})
  - 2\ebdiv((\nabla \psi_1)P) + P\Delta\psi_1. 
\end{split}\end{equation*}
 $P_{1,k}$ and $P_3$ are defined by the representation formula 
${(-\Delta)}^{-1}(f) = \frac{1}{4\pi}(\frac{1}{|x|} * f)$\\
while $P_{2,k}$  by the Riesz transforms.\\
\\
Since $\psi_1 = 1 $ on $ B_{\frac{1}{3}}$, we have $\Delta P = \Delta(\psi_1 P)$ on $ B_{\frac{1}{3}}$. 
Thus \eqref{pressure_decomposition_expressition} holds.\\
\\
By definition of $P_{2,k}$, \eqref{lem_pressure_decomposition_p2k} holds.\\
\\
For \eqref{lem_pressure_decomposition_p1k} and \eqref{lem_pressure_decomposition_p3},
 it follows the proof of the lemma 3 of \cite{vas:partial} directly. 
For completeness, we present a proof here. Note that $ (\psi_1 -\psi_k) $ 
is supported in $ (B_{\frac{1}{6}} - B_{k-\frac{2}{3}} )$ and  
$ \nabla\psi_1 $ is supported in $ (B_{\frac{1}{6}} - B_{\frac{1}{3}} )$. Thus
for $x\in B_{k-\frac{1}{3}}$, 
\begin{equation*}\begin{split}
|P_{1,k}(x)| &= \Bigg|\frac{1}{4\pi}\int_{(B_{\frac{1}{6}} - B_{k-\frac{2}{3}} )}
\frac{1}{|x-y|}\sum_{ij}(\partial_i \partial_j ((\psi_1 -\psi_k) A_{ij}))(y)dy\Bigg|\\
&\leq \sup_{y\in B^C_{k-\frac{2}{3}}}(|\nabla^2_y\frac{1}{|x-y|}|) \cdot \sum_{ij}
\int_{B_{\frac{1}{6}}}|A_{ij}(x)|dy\\
&\leq C\cdot\sup_{y\in B^C_{k-\frac{2}{3}}}(\frac{1}{|x-y|^3}) \cdot 
\sum_{ij}\|A_{ij}\|_{L^1(B_{\frac{1}{6}})}
\leq C_1\cdot2^{9k} \cdot 
\sum_{ij}\|A_{ij}\|_{L^1(B_{\frac{1}{6}})}.
\end{split}\end{equation*}
We used integration by parts and  facts $|x-y| \geq 2^{-3k}$ and $ |(\psi_1 -\psi_k)|\leq 1 $ . \\
\\ In the same way, for $x\in B_{k-\frac{1}{3}}$,
\begin{equation*}\begin{split}
|\nabla P_{1,k}(x)| 
&\leq C_2\cdot2^{12k} \cdot 
\sum_{ij}\|A_{ij}\|_{L^1(B_{\frac{1}{6}})}.\\                 
\end{split}\end{equation*}

For $x\in B_{\frac{2}{3}}$, 
\begin{equation*}\begin{split}
|\nabla P_{3}(x)| 
= & \Bigg|\frac{1}{4\pi}\int_{(B_{\frac{1}{6}} - B_{\frac{1}{3}} )}
(\nabla_y\frac{1}{|x-y|})\Big[-\sum_{ij}\partial_j[(\partial_i \psi_1)(A_{ij})]
- \sum_{ij}\partial_i[(\partial_j \psi_1)(A_{ij})] \\ 
&\quad + \sum_{ij}(\partial_i \partial_j \psi_1)(A_{ij}))
- 2\ebdiv((\nabla \psi_1)P) + P\Delta\psi_1 \Big]dy\Bigg|\\
\leq& C_3\Big(\sum_{ij}\|A_{ij}\|_{L^1(B_{\frac{1}{6}})}
+ \|P\|_{L^1(B_{\frac{1}{6}})}\Big).
\end{split}\end{equation*}

These prove \eqref{lem_pressure_decomposition_p1k} and \eqref{lem_pressure_decomposition_p3} and we keep the 
constant  ${\Lambda_1} = max(C_1,C_2,C_3)$ for future use.

\end{proof}


Before presenting De Giori arguments for 
large $r$ and small $r$, we need more notations. 
In the following two main lemmas \ref{lem_partial_1}
and \ref{lem_partial_2}, $P_3$ 
will be constructed from solutions $(u,P)$ for (Problem II-r) by
using the previous lemma \ref{lem_pressure_decomposition} and 
it will be clearly shown 
that $\nabla P_3$ has $L_t^1L_x^{\infty}$ bound. Thus we can define
\begin{equation}\begin{split}\label{def_e_k}
E_k(t) = &\frac{1}{2}(1-2^{-k}) + \int_{-1}^{t}\|\nabla P_3(s,\cdot)
\|_{L^{\infty}(B_{\frac{2}{3}})}ds, \\
& \mbox{ for } t \in[-2,0] \mbox{ and for } k\geq 0.
\end{split}\end{equation} 
Note that $E_k$ depends on $t$.
We also define followings like in \cite{vas:partial}
\begin{equation*}\begin{split}
v_k &= (|u|-E_k)_{+},\\
d_k & = \sqrt{\frac{E_k \mathbf{1}_{\{|u|\geq E_k\}}}{|u|}|\nabla|u||^2 + 
\frac{v_k}{|u|}|\nabla u|^2} \quad\mbox{ and}\\
U_k &= \sup_{t\in[T_k,0]}\Big(\int_{B_k}|v_k|^2 dx \Big) + \int\int_{Q_k}|d_k|^2 dx dt\\
&= \|v_k\|_{L^{\infty}(T_k,0;L^2(B_k))}^2 + \|d_k\|^2_{L^2(Q_k)}.
\end{split}\end{equation*}
In this way, $P_3$ will be absorbed into $v_k$, which is the key
idea of proof of
this proposition \ref{partial_problem_II_r}. \\

\subsection{De Giorgi argument to get a control for large $r$}\label{proof_lem_partial_1}

The following  big lemma says that
we can obtain a certain uniform non-linear estimate 
in the form of $W_k\leq C^k\cdot W_{k-1}^\beta $ when  $r$ is large.
Then an elementary lemma can give us the conclusion (we will see
the lemma \ref{lem_recursive} later).
On the other hand,
 for small $r$, we have the factor of $(1/r^3)$ which blows up
as $r$ goes to zero. This weak point implies that 
we still need some extra work after this lemma.
(it will be the next big lemma  \ref{lem_partial_2}).
\begin{lem}\label{lem_partial_1}
There exist universal constants ${\delta}_1>0$ and $\bar{C}_1>1$ such that 
if u is a solution of (Problem II-r) for some $0<r<\infty$ verifying both
\begin{equation*}\begin{split}
 &\| u\|_{L^{\infty}(-2,0;L^{2}(B(\frac{5}{4})))}+ 
\|P\|_{L^1(-2,0;L^{1}(B(1)))}+\| \nabla u\|_{L^{2}(-2,0;L^{2}(B(\frac{5}{4})))}
\leq {\delta}_1\\ \mbox{ and } 
&\| \mathcal{M}(|\nabla u|)\|_{L^{2}(-2,0;L^{2}(B(2)))}\leq {\delta}_1,\\
\end{split}\end{equation*} 

 then we have 
\begin{equation*}
 U_k \leq \begin{cases}& (\bar{C}_1)^k U_{k-1}^{\frac{7}{6}} ,\quad \mbox{ for any } k\geq 1 \quad\mbox{ if } r\geq s_{1}\\
 &\frac{1}{r^3}\cdot (\bar{C}_1)^k U_{k-1}^{\frac{7}{6}} ,
\quad \mbox{ for any } k\geq 1  \quad\mbox{ if } r< s_{1}. \end{cases}
\end{equation*}
\end{lem}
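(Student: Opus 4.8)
The plan is to run a De Giorgi iteration on the energy quantities $U_k$ built from the truncated functions $v_k = (|u|-E_k)_+$, exploiting that $E_k$ absorbs the non-local pressure piece $P_3$. First I would fix a solution $(u,P)$ of (Problem II-$r$) satisfying the smallness hypotheses, apply the pressure decomposition Lemma \ref{lem_pressure_decomposition} with $A_{ij} = (w^i - \text{const})\,u^j$ (so that $-\Delta P = \sum_{ij}\partial_i\partial_j A_{ij}$, noting $\operatorname{div}w=0$ lets us write the advection term this way), and record that $\|A_{ij}\|_{L^1(B_{1/6})}$ is controlled by $\|u\|_{L^\infty_t L^2_x}\,\|w\|_{L^2_t L^\infty_x}$; using Corollary \ref{convolution_cor} together with the smallness of $\|\mathcal M(|\nabla u|)\|_{L^2(Q(2))}$, this is small when $r$ is large, but carries a $(1+4/r)^3$ factor that degrades for small $r$ — this is exactly where the case split $r \geq s_1$ versus $r < s_1$ enters, and where the $1/r^3$ appears in the conclusion. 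The function $P_3$ then has an $L^1_t L^\infty_x(B_{2/3})$ bound, so $E_k(t)$ in \eqref{def_e_k} is well-defined, increasing in $k$, and $E_k \le 1$ by choosing $\delta_1$ small.

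Next I would derive the level-set energy inequality. Multiply \eqref{navier_Problem II-r} by $u\,\mathbf 1_{\{|u|\ge E_k\}}\psi_k^2$ (more precisely test the equation for $|u|$ against $v_k\psi_k^2$), integrate over $B_k \times (t,0)$, and use \eqref{suitable_Problem II-r}. The transport term $\operatorname{div}(w|u|^2/2)$ is handled by integration by parts, moving a derivative onto $\psi_k^2$, and is bounded using $\|w\|_{L^2_t L^\infty_x(B(2))}$; the crucial point is that the term generated by $\frac{d}{dt}E_k$, namely $\int \|\nabla P_3\|_{L^\infty} \mathbf 1_{\{|u|\ge E_k\}}\psi_k^2$, exactly cancels the contribution of $\nabla P_3$ coming from $\operatorname{div}(uP) = \operatorname{div}(u(P_{1,k}+P_{2,k}+P_3))$ after writing $\int u\cdot\nabla P_3\,\mathbf 1\,\psi_k^2 \le \int\|\nabla P_3\|_{L^\infty}|u|\mathbf 1\,\psi_k^2$ and peeling off $|u|\mathbf 1 = v_k + E_k\mathbf 1$. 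The pieces $P_{1,k}$ (with its $2^{12k}$ bound from \eqref{lem_pressure_decomposition_p1k}, times $\|A_{ij}\|_{L^1}$ which on $Q_k$ is $\le C U_{k-1}^{1/2}\cdots$) and $P_{2,k}$ (which solves a whole-space Poisson equation, so Riesz-transform $L^2$ bounds plus $\|\psi_k A_{ij}\|_{L^2} \le$ local energy of $u$ give control) are estimated in the standard De Giorgi way, producing powers of $2^{Ck}$ and a superlinear power of $U_{k-1}$.

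Then I would convert the resulting inequality — roughly $U_k \le 2^{Ck}\big(U_{k-1} + U_{k-1}^{1+\gamma_1} + \cdots\big)$ with all exponents exceeding $1$ after using that $|\{|u|\ge E_k\}\cap Q_{k-1}|$ is small via Chebyshev (this is where $v_{k-1}\ge E_k-E_{k-1} = 2^{-k}$ on $\{|u|\ge E_k\}$ gives the gain, contributing extra $2^{Ck}$ but also extra powers of $U_{k-1}$) — into the clean recursion $U_k \le (\bar C_1)^k U_{k-1}^{7/6}$, absorbing the lower-order terms into the $(\bar C_1)^k$ factor because $U_{k-1}$ is already $\le 1$ (guaranteed by the smallness of $\delta_1$, which makes $U_0$ small). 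In the regime $r < s_1$ every appearance of $\|w\|_{L^2_t L^\infty_x}$ carries the extra $(1+4/r)^3 \le Cr^{-3}$, and one checks it enters $U_k$ linearly at worst, yielding the stated $\frac1{r^3}(\bar C_1)^k U_{k-1}^{7/6}$. I expect the main obstacle to be bookkeeping the $r$-dependence uniformly: one must verify that the factor $1/r^3$ (and not a worse power, nor a $k$-dependent power of $1/r$) is all that survives, which requires being careful that $w$ enters only through the transport term and the pressure source $A_{ij}$, each contributing at most one factor of $\|w\|_{L^2_t L^\infty_x}$ per term — the quadratic-in-$w$ terms, if any arose, would be fatal, so the structure $\operatorname{div}(w|u|^2/2)$ being linear in $w$ is essential.
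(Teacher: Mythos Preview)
Your overall strategy matches the paper's, but there is a genuine gap in how you control $P_3$ for small $r$, and this gap is fatal for the definition of $E_k$.

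You propose to bound $\|A_{ij}\|_{L^1(B_{1/6})}$ by $\|u\|_{L^\infty_t L^2_x}\,\|w\|_{L^2_t L^\infty_x}$ and then invoke Corollary~\ref{convolution_cor}. For $r\geq s_1$ this is fine, but for $r<s_1$ that corollary only gives $\|w\|_{L^2_t L^\infty_x}\leq C\delta_1/r^3$. Feeding this into \eqref{lem_pressure_decomposition_p3} yields $\|\nabla P_3\|_{L^1_t L^\infty_x(B_{2/3})}\leq C(\delta_1+\delta_1^2/r^3)$, which you cannot make $\leq 1/2$ by choosing $\delta_1$ small \emph{uniformly in $r$}. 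Without $E_k\leq 1$ you lose the basic identity $|u(1-v_k/|u|)|\leq E_k\leq 1$ from Remark~\ref{lem10_39}, and the entire decomposition of the local pressure term $P_{2,k}$ (where you split $u_j=u_j(1-v_k/|u|)+u_j\frac{v_k}{|u|}$ and use boundedness of the first piece) collapses. The $1/r^3$ in the \emph{conclusion} of the lemma is allowed, but it cannot be allowed to contaminate the bound on $P_3$, because that bound must be absolute to make the truncation levels meaningful.

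The paper repairs this by proving a \emph{uniform} transport estimate $\||w|\,|u|\|_{L^2_t L^{3/2}_x(B_{1/6})}\leq \Lambda_2\,\delta_1$ valid for \emph{all} $0<r<\infty$. For small $r$ it does \emph{not} use Corollary~\ref{convolution_cor}; instead it observes that $w'=u*\phi_r$ restricted to $B_{1/6}$ depends only on $u$ in $B_0$ (since $r\leq s_1$ is the gap between these balls), so $\|w'\|_{L^4_t L^3_x}\leq \|u\|_{L^4_t L^3_x}\leq C\delta_1$, while $w''$ is bounded by $\|u\|_{L^\infty_t L^2_x}$. Then H\"older in the form $\||w|\,|u|\|_{L^2_t L^{3/2}_x}\leq \|w\|_{L^4_t L^3_x}\|u\|_{L^4_t L^3_x}$ gives the uniform bound. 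Only \emph{after} $P_3$ and $E_k$ are under control does the $1/r^3$ appear, and it enters exactly once in each of the transport term $(II)$ and the local pressure term $(IV)$ through a single factor of $\|w\|_{L^2_tL^\infty_x}$ or $\|\nabla w\|_{L^2_tL^\infty_x}$---note the paper also needs $\nabla w$ here, since expanding $\partial_j(\psi_k w_i u_j)$ produces $\psi_k(\partial_j w_i)u_j$ thanks to $\operatorname{div}u=0$, and this drives the pieces $G_{3,k},G_{4,k}$ of $\nabla P_{2,k}$. Your sketch of $(IV)$ (``Riesz-transform $L^2$ bounds plus $\|\psi_k A_{ij}\|_{L^2}$'') is too coarse to see this and to guarantee the factor stays linear in $1/r^3$.
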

\begin{rem}
 $s_1$ is a pre-fixed constant defined in 
 \eqref{def_s_k} such that $0<s_1<1/4$, and
 $({\delta}_1,\bar{C}_1)$ is independent of any 
$ 0<r<\infty$. It will be clear that 
the exponent $7/6$ is not optimal and we can make it close to $(4/3)$
arbitrarily. However, any exponent bigger than $1$ is enough for our study.
\end{rem}
\begin{proof}

We assume ${\delta_{1}} <1$. First we claim that there exists
a universal constant ${\Lambda_2}\geq 1$ 
such that
\begin{equation}\label{w_iu_j}
\||w|\cdot|u|\|_{L^{2}(-2,0;L^{3/2}(B_{\frac{1}{6}}))}\leq {\Lambda_2}\cdot{\delta_{1}} \quad\mbox{for any } 0<r<\infty. 
\end{equation} 


In order to prove the above claim \eqref{w_iu_j}, we need to separate it into
a large $r$ case and a small $r$ case:\\

\textbf{(I)-large $r$ case.} From the corollary \ref{convolution_cor} if $r\geq s_{1}$, then
\begin{equation}\begin{split}\label{w_large_r}
 \| w\|_{L^2(-4,0;L^{\infty}(B(2)))}
&\leq {C}
\cdot(1+\frac{4}{s_{1}})^3\cdot\|\mathcal{M}(|\nabla u|)\|_{L^2{(Q(2))}}\\
&\leq {C}\|\mathcal{M}(|\nabla u|)\|_{L^2{(Q(2))}}\leq {C}{\delta_{1}}.
\end{split}\end{equation} Likewise, 
\begin{equation}\begin{split}\label{nabla_w_large_r}
 \|\nabla w\|_{L^2(-4,0;L^{\infty}(B(2)))}
&\leq {C}{\delta_{1}}.
\end{split}\end{equation}
With Holder's inequality and $B_{\frac{1}{6}}\subset B_{0}=B(1)\subset B(\frac{5}{4})\subset B(2)$,
\begin{equation*}\begin{split}
\||w|\cdot|u|\|_{L^{2}(-2,0;L^{3/2}(B_{\frac{1}{6}}))}&\leq C
\| u\|_{L^{\infty}(-2,0;L^{2}(B(\frac{5}{4}))}\cdot\| w\|_{L^2(-4,0;L^{\infty}(B(2)))}\\
&\leq C  \cdot{\delta_{1}}^2\leq C_1 \cdot{\delta_{1}}.    
\end{split}\end{equation*} so we obtained \eqref{w_iu_j} for  $r\geq s_{1}$.\\


\textbf{(II)-small $r$ case.} On the other hand, if $r< s_{1}$, then
\begin{equation}\begin{split}\label{w_small_r_with_r^3}
 \| w\|_{L^2(-4,0;L^{\infty}(B(2)))}&\leq {C}
\cdot(1+\frac{4}{r})^3\cdot\|\mathcal{M}(|\nabla u|)\|_{L^2{(Q(2))}}\\
&\leq {C}\frac{1}{r^3}\|\mathcal{M}(|\nabla u|)\|_{L^2{(Q(2))}}\leq {C}\frac{1}{r^3}{\delta_{1}}
\end{split}\end{equation} and
\begin{equation}\begin{split}\label{nabla_w_small_r_with_r^3}
 \|\nabla w\|_{L^2(-4,0;L^{\infty}(B(2)))}
&\leq {C}\frac{1}{r^3}{\delta_{1}}.
\end{split}\end{equation}

However, it is not enough to prove \eqref{w_iu_j} because  $\frac{1}{r^3}$ factor
 blows up as $r$ goes to zero. So, instead, we use the idea that $w$ and $u$ are similar if $r$ is small:

 \begin{equation*}\begin{split}
\| u\|_{L^{4}(-2,0;L^{3}(B_0))}
&\leq  
C\Big(\| u\|_{L^{\infty}(-2,0;L^{2}(B_0))}+
\| \nabla u\|_{L^{2}(-2,0;L^{2}(B_0))}\Big)
 \leq C{\delta_{1}} 
 \end{split}\end{equation*}
and 
\begin{equation*}\begin{split}
\| w^{\prime}\|_{L^{4}(-2,0;L^{3}(B_{\frac{1}{6}}))}
 &= \| u*\phi_r\|_{L^{4}(-2,0;L^{3}(B_{\frac{1}{6}}))}
 \leq\| u\|_{L^{4}(-2,0;L^{3}(B_0))}  \leq C{\delta_{1}} \\
 \end{split}\end{equation*}
 because $u*\phi_r$ in $B_{\frac{1}{6}}$ depends only on $u$  in $B_0$.
(recall that $r\leq s_1$ and $s_1$ is the distance $B^c_{0}$ and $B_{\frac{1}{6}}$
and refer \eqref{young}).   
 For $w^{\prime\prime}$,
 \begin{equation}\begin{split}\label{w_prime_prime_small_r}
\| w^{\prime\prime}\|_{L^\infty(-2,0;L^{\infty}(B(2)))}
 &=\| w^{\prime\prime}\|_{L_t^\infty((-2,0))}\\
&=\|\int_{\mathbb{R}^3}\phi(y)(u*\phi_{r})(y)dy\|_{L_t^\infty((-2,0))}\\
&\leq C\|\|u*\phi_{r}\|_{L_x^1(B(1))}\|_{L_t^\infty((-2,0))}\\
&\leq C\|\|u\|_{L_x^1(B(\frac{5}{4}))}\|_{L_t^\infty((-2,0))}\\
&\leq {C}\| u\|_{L^\infty(-2,0;L^{2}(B(\frac{5}{4})))}\\
&\leq {C}{\delta_{1}}\\
\end{split}\end{equation} because $ w^{\prime\prime}$ is a constant in $x$
, $supp(\phi)\subset B(1)$ and $u*\phi_r$ in $B(1)$ depends only on $u$  
in $B(1+s_{1})$
 which is a subset of $B(\frac{5}{4})$.
As a result, we have
 \begin{equation}\begin{split}\label{w_u}
\||w|\cdot|u|\|_{L^{2}(-2,0;L^{3/2}(B_{\frac{1}{6}}))}&\leq C
\| u\|_{L^{4}(-2,0;L^{3}(B(1))}
\cdot\| w\|_{L^4(-2,0;L^{3}(B(\frac{1}{6})))}\\
&\leq C{\delta_{1}}\cdot
\| |w^{\prime}|+|w^{\prime\prime}|\|_{L^4(-2,0;L^{3}(B(\frac{1}{6})))}\\
&\leq C \cdot{\delta_{1}}^2 \leq C_2 \cdot{\delta_{1}}
\end{split}\end{equation} so that  we obtained \eqref{w_iu_j} for  $r\leq s_{1}$.\\

\noindent
Hence, taking
 \begin{equation}\label{def_breve_C}
{\Lambda_2}=\max(C_1,C_2,1),
\end{equation} we have 
 \eqref{w_iu_j} and  ${\Lambda_2}$ is independent of  $0<r<\infty$
 as long as $\delta_1<1$.
From now on, we assume $\delta_1<1$ sufficiently
 small to be $10 \cdot {\Lambda_1}\cdot{\Lambda_2}\cdot\delta_1\leq 1/2$
 (Recall that ${\Lambda_1}$ comes from the lemma \ref{lem_pressure_decomposition}). \\

Thanks to the lemma \ref{lem_pressure_decomposition} and \eqref{w_iu_j}, by putting 
$A_{ij}=w_iu_j$ we can decompose $P$ by\\
\begin{equation*}
P = P_{1,k}  + P_{2,k} + P_{3}\quad\mbox{     in } [-2,0]\times B_{\frac{1}{3}} 
\end{equation*} for each $k\geq1$
with following properties:\\
\begin{equation}\begin{split}\label{eq_pressure_decomposition_p1k} 
 \| |\nabla P_{1,k}|
   + |P_{1,k}|\|_{L^{2}(-2,0;L^{\infty}(B_{k-\frac{1}{3}}))}  
&\leq {\Lambda_1}  2^{12k}\sum_{ij}\|w_iu_j\|_{L^{2}(-2,0;L^1(B_{\frac{1}{6}}))}\\
  &\leq 9\cdot{\Lambda_1}\cdot{\Lambda_2} \cdot {\delta_{1}}\cdot2^{12k}
\leq 2^{12k} \quad\mbox{ for any } k\geq1,\\  
\end{split}\end{equation} 

\begin{equation}\label{eq_pressure_decomposition_p2k} 
-\Delta P_{2,k} = \sum_{ij} \partial_i \partial_j (\psi_k w_i u_j)
 \quad\quad \mbox{in } [-2,0]\times\mathbb{R}^3 \quad\mbox{ for any } k\geq1
\quad \mbox{ and} \\  
\end{equation} 
\begin{equation}\begin{split}\label{eq_pressure_decomposition_p3} 
\|\nabla P_{3}\|_{L^{1}(-2,0;L^{\infty}(B_{\frac{2}{3}}))} &\leq {\Lambda_1}
 \Big(\|P\|_{L^{1}(-2,0;L^1(B(1))} + \sum_{ij}\|w_iu_j\|_{L^{2}(-2,0;L^1(B(1))}\Big)\\
 &\leq {\Lambda_1}({\delta_{1}}+ 9\cdot{\Lambda_2}\cdot{\delta_{1}})
 \leq 10\cdot{\Lambda_1}\cdot {\Lambda_2}\cdot{\delta_{1}}\leq \frac{1}{2}. 
\end{split}\end{equation}
\noindent Note that the above \eqref{eq_pressure_decomposition_p3}
enables $E_k$ to be well-defined and it
satisfies $0\leq E_k \leq 1$ (see the definition of $E_k$ in \eqref{def_e_k}).\\


In the following remarks \ref{lem10_39}--\ref{rem_lem10_39}, we gather some easy results, which were obtained in \cite{vas:partial}, without proof. They can be found in 
the lemmas 
4, 6 and the remark of the lemma 4 of \cite{vas:partial}. Note that any constants $C$ in the following remarks
 do not depend on $k$.\\

\begin{rem}\label{lem10_39}
For any $k\geq 0$, the function $u$ can be decomposed by
$u=u\frac{v_k}{|u|} + u(1-\frac{v_k}{|u|})$.
Also we have
\begin{equation}\begin{split}\label{d_k}
&\Big|u(1-\frac{v_k}{|u|})\Big|\leq 1, 
\quad\frac{v_k}{|u|}|\nabla u|\leq d_k, \quad
\mathbf{1}_{|u|\geq E_k}|\nabla|u||\leq d_k,\\
&|\nabla v_k|\leq d_k \quad\mbox{ and }\quad
\big|\nabla\frac{uv_k}{|u|}\big|\leq 3d_k.
\end{split}\end{equation}
\end{rem}
\begin{rem}\label{lem12_39}
For any $k\geq1$ and 
for any $q\geq1$,
\begin{equation*}\begin{split}
\|\mathbf{1}_{v_k>0}\|_{L^q(Q_{k-1})}&\leq C 2^{\frac{10k}{3q}}U^{\frac{5}{3q}}_{k-1} \quad\mbox{ and }\quad
\|\mathbf{1}_{v_k>0}\|_{L^{\infty}(T_{k-1},0;L^q(Q_{k-1})}
\leq C 2^{\frac{2k}{q}}U^{\frac{1}{q}}_{k-1}.\\
\end{split}\end{equation*}
\end{rem}
\begin{rem}\label{rem_lem10_39}
For any $k\geq1$,
$\|v_{k-1}\|_{L^{\frac{10}{3}}(Q_{k-1})}\leq C U_{k-1}^{\frac{1}{2}}.$
\end{rem}

\ \\

From the above remarks \ref{lem10_39}--\ref{rem_lem10_39}, we have for any $1\leq p\leq\frac{10}{3}$,
\begin{equation}\begin{split}\label{raise_of_power}
\|v_k\|_{L^{p}(Q_{k-1})}&=\|v_k\mathbf{1}_{v_k>0}\|_{L^{p}(Q_{k-1})}\\
&\leq \|v_k\|_{L^{\frac{10}{3}}(Q_{k-1})}\cdot\|\mathbf{1}_{v_k>0}\|_{L^{1/(\frac{1}{p}-\frac{3}{10})}(Q_{k-1})}\\
&\leq \|v_{k-1}\|_{L^{\frac{10}{3}}(Q_{k-1})}\cdot C 2^{\frac{10k}{3}\cdot(\frac{1}{p}-\frac{3}{10})}U^{\frac{5}{3}\cdot(\frac{1}{p}-\frac{3}{10})}_{k-1}\\
&\leq C2^{\frac{7k}{3}}U^{\frac{5}{3p}}_{k-1}.\\
\end{split}\end{equation}
Likewise, for any $1\leq p\leq2$,
\begin{equation}\begin{split}\label{raise_of_power2} 
\|v_k\|_{L^{\infty}(T_{k-1},0;L^{p}(B_{k-1}))}
&\leq C2^{k}U^{\frac{1}{p}}_{k-1}
\end{split}\end{equation} and
\begin{equation}\begin{split}\label{raise_of_power3}
\|d_k\|_{L^{p}(Q_{k-1})}&
\leq C2^{\frac{5k}{3}}U^{\frac{5}{3p}-\frac{1}{3}}_{k-1}.\\
\end{split}\end{equation}\\


Second, we claim that
 for every $k\geq 1$, functions $v_k$ verifies:
\begin{equation}\label{eq_suitable_inequality_for_v_k_prob_II_r}\begin{split}
 \partial_t \frac{v_k^2}{2} + &\ebdiv (w \frac{v_k^2}{2}) + d_k^2 - \Delta\frac{v_k^2}{2} \\
 &+\ebdiv (u (P_{1,k} + P_{2,k})) + (\frac{v_k}{|u|}-1)u\cdot\nabla (P_{1,k} + P_{2,k})  \leq 0
\end{split}\end{equation}
in  $(-2,0)\times B_{\frac{2}{3}}$.\\

\begin{rem}
Note that the above inequality \eqref{eq_suitable_inequality_for_v_k_prob_II_r} does not contain the $P_3$ term. We will see that this fact comes from
the definition of $E_k(t)$ in \eqref{def_e_k}.
\end{rem}

Indeed, observe that 
      $  \frac{v_k^2}{2} = \frac{|u|^2}{2} + \frac{v_k^2  -|u|^2}{2}$
        and note that $E_k$ does not depend on space variable but on time variable.
Thus we can compute,
for time derivatives,
 \begin{equation*}\begin{split}             
 \partial_t&(\frac{v^2_k - |u|^2}{2}) = v_k\partial_{t}v_k - u\partial_{t}u
 = v_k\partial_{t}|u| - v_k\partial_t E_k - u\partial_{t}u \\
&=u (\frac{v_k}{|u|} -1)\partial_t u - v_k\partial_t E_k
= u (\frac{v_k}{|u|} -1)\partial_t u - v_k
\|\nabla P_3(t,\cdot)\|_{L^{\infty}(B_{\frac{2}{3}})}
 \end{split}\end{equation*} while, 
for any space derivatives $\partial_{\alpha}$,
\begin{equation*}\begin{split}             
 \partial_{\alpha}(\frac{v^2_k - |u|^2}{2}) 
& =u (\frac{v_k}{|u|} -1)\partial_{\alpha} u.
 \end{split}\end{equation*}
 Then we follow the same way as the lemma 5 of \cite{vas:partial} did:
First, we multiply \eqref{navier_Problem II-r} by $u (\frac{v_k}{|u|} -1)$,
and then we sum the result and \eqref{suitable_Problem II-r}. We omit the details which can be found in the proof of the lemma 5 of \cite{vas:partial}.
As a result,
 we have  

\begin{equation*}\begin{split}
 0 \geq & \quad\partial_t \frac{v_k^2}{2} + \ebdiv (w \frac{v_k^2}{2}) + d_k^2 
- \Delta\frac{v_k^2}{2} +v_k\|\nabla P_3(t,\cdot)\|_{L^{\infty}(B_{\frac{2}{3}})}\\
 &+\ebdiv (u P) + (\frac{v_k}{|u|}-1)u\cdot\nabla P \\   
=&\quad\partial_t \frac{v_k^2}{2} + \ebdiv (w \frac{v_k^2}{2}) + d_k^2 - \Delta\frac{v_k^2}{2}  
+\Big(v_k\|\nabla P_3(t,\cdot)\|_{L^{\infty}(B_{\frac{2}{3}})} + \frac{v_k}{|u|}u\cdot \nabla P_3\Big)\\
 &+\ebdiv (u (P_{1,k}+P_{2,k})) + (\frac{v_k}{|u|}-1)u\cdot\nabla (P_{1,k}+P_{2,k}). 
\end{split}\end{equation*}
For the last equality, we used the fact $P = P_{1,k}  + P_{2,k} + P_{3} $     in $ B_{\frac{1}{3}} $ and
\begin{equation}\label{easyp_3} 
\ebdiv (u P_3) + (\frac{v_k}{|u|}-1)u\cdot\nabla P_3= \frac{v_k}{|u|}u\cdot \nabla P_3.
\end{equation} 
Thus we proved the claim \eqref{eq_suitable_inequality_for_v_k_prob_II_r}
due to
\begin{equation*}   
 v_k\|\nabla P_3(t,\cdot)\|_{L^{\infty}(B_{\frac{2}{3}})} + \frac{v_k}{|u|}u\cdot \nabla P_3 \geq 0 
\quad \mbox{ on } (-2,0)\times B_{\frac{2}{3}}.
\end{equation*} 
For any integer $k$, we introduce a cut-off function $\eta_k(x)\in C^{\infty}(\mathbb{R}^3)$ satisfying
\begin{equation*}\begin{split}
&\eta_k = 1 \quad\mbox{ in } B_{k} \quad , \quad
\eta_k = 0 \quad\mbox{ in } B_{k-\frac{1}{3}}^C\quad , \quad
0\leq \eta_k \leq 1, \\
&|\nabla\eta_k|\leq C2^{3k}  \quad \mbox{and} \quad
|{\nabla}^2\eta_k|\leq C2^{6k}, 
\quad \mbox{ for }\mbox{any }x\in\mathbb{R}^3.
\end{split}\end{equation*}
Multiplying \eqref{eq_suitable_inequality_for_v_k_prob_II_r} by $\eta_k$
 and integrating $[\sigma,t]\times 
\mathbb{R}^3 $ for $T_{k-1}\leq \sigma\leq T_k\leq t \leq 0$,
 \begin{equation*}\begin{split}
&\int_{\mathbb{R}^3}\eta_k(x)\frac{|v_k(t,x)|^2}{2}dx
+ \int_{\sigma}^t\int_{\mathbb{R}^3}\eta_k(x)d^2_k(s,x)dxds\\
&\leq \int_{\mathbb{R}^3}\eta_k(x)\frac{|v_k(\sigma,x)|^2}{2}dx\\
+&\int_{\sigma}^t\int_{\mathbb{R}^3}(\nabla\eta_k)(x)w(s,x)\frac{|v_k(s,x)|^2}{2}dxds
+\int_{\sigma}^t\int_{\mathbb{R}^3}(\Delta\eta_k)(x)\frac{|v_k(s,x)|^2}{2}dxds\\
-&\int_{\sigma}^t\int_{\mathbb{R}^3}\eta_k(x)
\Big(\ebdiv (u (P_{1,k} + P_{2,k})) + (\frac{v_k}{|u|}-1)u
\cdot\nabla (P_{1,k} + P_{2,k})\Big)(s,x)dxds.
\end{split}\end{equation*}
Integrating in $\sigma\in[T_{k-1},T_k]$ and dividing by 
$-(T_{k-1}-T_k)=2^{-(k+1)}$, 
\begin{equation*}\begin{split}
&\sup_{t\in[T_k,1]}\Big(\int_{\mathbb{R}^3}\eta_k(x)\frac{|v_k(t,x)|^2}{2}dx
+ \int_{T_k}^t\int_{\mathbb{R}^3}\eta_k(x)d^2_k(s,x)dxds\Big)\\
&\leq 2^{k+1}\cdot \int_{T_{k-1}}^{T_k}\int_{\mathbb{R}^3}\eta_k(x)\frac{|v_k(\sigma,x)|^2}{2}dx\\
+&\int_{T_{k-1}}^{0}\Big|\int_{\mathbb{R}^3}\nabla\eta_k(x)w(s,x)\frac{|v_k(s,x)|^2}{2}dx\Big|ds
+\int_{T_{k-1}}^{0}\Big|\int_{\mathbb{R}^3}\Delta\eta_k(x)\frac{|v_k(s,x)|^2}{2}dx\Big|ds\\
+&\int_{T_{k-1}}^{0}\Big|\int_{\mathbb{R}^3}\eta_k(x)
\Big(\ebdiv (u (P_{1,k} + P_{2,k})) + (\frac{v_k}{|u|}-1)u
\cdot\nabla (P_{1,k} + P_{2,k})\Big)(s,x)dx\Big|ds.\\
\end{split}\end{equation*}
From $\eta_k=1$ on $ B_k$,
\begin{equation*}\begin{split}
U_k
&\leq\sup_{t\in[T_k,1]}\Big(\int_{\mathbb{R}^3}\eta_k(x)\frac{|v_k(t,x)|^2}{2}dx\Big)
+ \int_{T_k}^0\int_{\mathbb{R}^3}\eta_k(x)d^2_k(s,x)dxds\\
&\leq2\cdot\sup_{t\in[T_k,1]}\Big(\int_{\mathbb{R}^3}\eta_k(x)\frac{|v_k(t,x)|^2}{2}dx
+ \int_{T_k}^t\int_{\mathbb{R}^3}\eta_k(x)d^2_k(s,x)dxds\Big).
\end{split}\end{equation*}
Thus we have
\begin{equation}\begin{split}\label{1234_decompo_1}
&U_k
\leq (I)+(II)+(III)+(IV)
\end{split}\end{equation} where
\begin{equation}\begin{split}\label{1234_decompo_2}
&(I)=C2^{6k}\int_{Q_{k-1}}|v_k(s,x)|^2dxds,\\
&(II)=\int_{Q_{k-1}}|\nabla\eta_k(x)|\cdot|w(s,x)|\cdot|v_k(s,x)|^2dxds,\\
&(III)=2\int_{T_{k-1}}^{0}\Big|\int_{\mathbb{R}^3}\eta_k(x)
\Big(\ebdiv (u P_{1,k}) + (\frac{v_k}{|u|}-1)u\cdot\nabla P_{1,k}\Big)(s,x)dx\Big|ds\quad\mbox{ and}\\
&(IV)=2\int_{T_{k-1}}^{0}\Big|\int_{\mathbb{R}^3}\eta_k(x)
\Big(\ebdiv (u P_{2,k}) + (\frac{v_k}{|u|}-1)u\cdot\nabla P_{2,k}\Big)(s,x)dx\Big|ds.
\end{split}\end{equation}

For $(I)$, by using \eqref{raise_of_power}, for any $0<r<\infty$,
\begin{equation}\begin{split}\label{(I)}
&(I)= C2^{6k}\|v_k\|^2_{L^{2}(Q_{k-1})}
\leq C2^{10k}U^{\frac{5}{3}}_{k-1}.
\end{split}\end{equation}\\

For $(II)$ with $r\geq s_{1}$,  by using \eqref{w_large_r} and \eqref{raise_of_power2},
\begin{equation}\begin{split}\label{(II-1)}
(II)
&\leq C2^{3k}\|w\|_{L^2(-4,0;L^{\infty}(B(2)))}\cdot
\||v_k|^2\|_{L^2(T_{k-1},0;L^{1}(B_{k-1}))}\\
&\leq C2^{3k}{\delta_{1}}\|v_k\|_{L^{\infty}(T_{k-1},0;L^{\frac{6}{5}}(B_{k-1}))}
\cdot\|v_k\|_{L^2(T_{k-1},0;L^{6}(B_{k-1}))}\\
&\leq C2^{4k}{\delta_{1}}U^{\frac{5}{6}}_{k-1}
\cdot\Big(\|v_{k-1}\|_{L^{\infty}(T_{k-1},0;L^{2}(B_{k-1}))}+
\|\nabla v_{k-1}\|_{L^2(T_{k-1},0;L^{2}(B_{k-1}))}\Big)\\
&\leq C2^{4k}\cdot{\delta_{1}}\cdot U^{\frac{5}{6}}_{k-1}
\cdot U^{\frac{1}{2}}_{k-1}\leq C2^{4k}\cdot{\delta_{1}}\cdot U^{\frac{4}{3}}_{k-1}
\leq C2^{4k}\cdot U^{\frac{4}{3}}_{k-1}.
\end{split}\end{equation} 
 For  $r<s_{1}$, follow the above steps using
 \eqref{w_small_r_with_r^3} instead of  using \eqref{w_large_r} 
then we get
 \begin{equation}\begin{split}\label{(II-2)}
&(II)\leq C\frac{1}{r^3}2^{4k}\cdot U^{\frac{4}{3}}_{k-1}.
 \end{split}\end{equation} 

For $(III) $ (non-local pressure term), observe that 
\begin{equation*}\begin{split}
\ebdiv (u P_{1,k}) + (\frac{v_k}{|u|}-1)u\cdot\nabla P_{1,k}
= \frac{v_k}{|u|}u\cdot\nabla P_{1,k}
\end{split}\end{equation*} 
because everything is smooth.
Thus, by using \eqref{eq_pressure_decomposition_p1k} 
and \eqref{raise_of_power},  for any $0<r<\infty$,
\begin{equation}\begin{split}\label{(III)}
(III)&\leq C\cdot\|\frac{v_k}{|u|}u\cdot\nabla P_{1,k}\|_{L^1(Q_{k-1})}
\leq C\||v_k|\cdot|\nabla P_{1,k}|\|_{L^1(Q_{k-1})}\\
&\leq\|v_k\|_{L^{2}(T_{k-1},0;L^{1}(B_{k-1})))}
\cdot\|\nabla P_{1,k}\|_{L^{2}(T_{k-1},0;L^{\infty}(B_{k-1}))}\\
&\leq\|\mathbf{1}_{v_k>0}\|_{L^{2}(T_{k-1},0;L^{2}(B_{k-1})))}
\|v_k\|_{L^{\infty}(T_{k-1},0;L^{2}(B_{k-1})))}
\cdot 2^{12k}\\ 
&\leq C2^{\frac{43k}{3}}U^{\frac{5}{6}}_{k-1}
U^{\frac{1}{2}}_{k-1}
\leq C2^{\frac{43k}{3}}U^{\frac{4}{3}}_{k-1}. 
\end{split}\end{equation}

For $(IV) $ (local pressure term), as we did for $(III)$, observe
\begin{equation*}\begin{split}
\ebdiv (u P_{2,k}) + (\frac{v_k}{|u|}-1)u\cdot\nabla P_{2,k}
= \frac{v_k}{|u|}u\cdot\nabla P_{2,k}.
\end{split}\end{equation*}
By definition of $P_{2,k}$, we have
\begin{equation*}\begin{split}
-\Delta P_{2,k}& = \sum_{ij} \partial_i \partial_j (\psi_k w_i u_j)= \sum_{ij} \partial_i ((\partial_j \psi_k) w_i u_j
+\psi_k (\partial_jw_i) u_j)\\
&= \sum_{ij} \partial_i \Big((\partial_j \psi_k) w_i u_j(1-\frac{v_k}{|u|})
+(\partial_j \psi_k) w_i u_j\frac{v_k}{|u|}\\
&\quad\quad+\psi_k (\partial_jw_i) u_j(1-\frac{v_k}{|u|})
+\psi_k (\partial_jw_i) u_j\frac{v_k}{|u|}\Big)\\
\end{split}\end{equation*} and 
\begin{equation*}\begin{split}
-\Delta (\nabla P_{2,k})&= \sum_{ij} \partial_i\nabla
 \Big((\partial_j \psi_k) w_i u_j(1-\frac{v_k}{|u|})
+(\partial_j \psi_k) w_i u_j\frac{v_k}{|u|}\\
&\quad\quad+\psi_k (\partial_jw_i) u_j(1-\frac{v_k}{|u|})
+\psi_k (\partial_jw_i) u_j\frac{v_k}{|u|}\Big).
\end{split}\end{equation*}\\

\noindent Thus we can decompose $\nabla P_{2,k}$ by the Riesz transform into
\begin{equation*}\begin{split}
\nabla P_{2,k}= G_{1,k}+G_{2,k}+G_{3,k}+G_{4,k}
\end{split}\end{equation*} where 
\begin{equation*}\begin{split}
&G_{1,k} =\sum_{ij} (\partial_i\nabla)(-\Delta)^{-1}\Big(
(\partial_j \psi_k) w_i u_j(1-\frac{v_k}{|u|})\Big),\\
&G_{2,k} =\sum_{ij} (\partial_i\nabla)(-\Delta)^{-1}\Big(
(\partial_j \psi_k) w_i u_j\frac{v_k}{|u|}\Big),\\
&G_{3,k} =\sum_{ij} (\partial_i\nabla)(-\Delta)^{-1}\Big(
\psi_k (\partial_jw_i) u_j(1-\frac{v_k}{|u|})\Big)\quad\mbox{ and}\\
&G_{4,k} =\sum_{ij} (\partial_i\nabla)(-\Delta)^{-1}\Big(
\psi_k (\partial_jw_i) u_j\frac{v_k}{|u|}\Big).
\end{split}\end{equation*}

\noindent From $L^p$-boundedness of the Riesz transform with the fact
$supp(\psi_k)\subset B_{k-(5/6)}\subset B_{k-1}$, we have 
\begin{equation*}\begin{split} 
&\|G_{2,k}\|_{L^2(T_{k-1},0;L^2(\mathbb{R}^3))}
\leq C2^{3k}\|w\|_{L^2(T_{k-1},0;L^{\infty}(B_{k-1}))}
\cdot\|v_k\|_{L^{\infty}(T_{k-1},0;L^2(B_{k-1}))},\\
&\|G_{4,k}\|_{L^2(T_{k-1},0;L^2(\mathbb{R}^3))}
\leq C\cdot\|\nabla w\|_{L^2(T_{k-1},0;L^{\infty}(B_{k-1}))}
\cdot\|v_k\|_{L^{\infty}(T_{k-1},0;L^2(B_{k-1}))}.\\
\end{split}\end{equation*}
 For any  $1<p<\infty$,
\begin{equation*}\begin{split} 
&\|G_{1,k}\|_{L^2(T_{k-1},0;L^p(\mathbb{R}^3))}
\leq C_p\cdot2^{3k}\|w\|_{L^2(T_{k-1},0;L^{\infty}(B_{k-1}))}\quad\mbox{ and}\\
&\|G_{3,k}\|_{L^2(T_{k-1},0;L^p(\mathbb{R}^3))}
\leq C_p\cdot\|\nabla w\|_{L^2(T_{k-1},0;L^{\infty}(B_{k-1}))}.
\end{split}\end{equation*} 
Therefore, by using \eqref{nabla_w_large_r} and \eqref{nabla_w_small_r_with_r^3}
\begin{equation*}
\||G_{2,k}|+|G_{4,k}|\|_{L^2(T_{k-1},0;L^2(\mathbb{R}^3))}\leq
\begin{cases} &C\cdot2^{3k}\cdot U^{\frac{1}{2}}_{k-1}
,\quad  \quad\mbox{ if } r\geq s_{1}\\
&C\cdot2^{3k}\cdot\frac{1}{r^3}\cdot U^{\frac{1}{2}}_{k-1}
,\quad  \quad\mbox{ if } r< s_{1}
\end{cases}\end{equation*} and, for any $1<p<\infty$,
\begin{equation*}
\||G_{1,k}|+|G_{3,k}|\|_{L^2(T_{k-1},0;L^p(\mathbb{R}^3))}\leq
\begin{cases} &C_p\cdot2^{3k}
,\quad  \quad\mbox{ if } r\geq s_{1}\\
&C_p\cdot2^{3k}\cdot\frac{1}{r^3}
,\quad  \quad\mbox{ if } r< s_{1}.
\end{cases}\end{equation*}
Thus, by using the above estimates
and \eqref{raise_of_power}, for $r\geq s_{1}$ and $p>5$,
\begin{equation*}\begin{split}
(IV)&\leq C\cdot\|\frac{v_k}{|u|}u\cdot\nabla P_{2,k}\|_{L^1(Q_{k-1})}
\leq C\||v_k|\cdot|\nabla P_{2,k}|\|_{L^1(Q_{k-1})}\\
&\leq C\||v_k|\cdot(|G_{1,k}|+|G_{3,k}|)\|_{L^1(Q_{k-1})}
+ C\||v_k|\cdot(|G_{2,k}|+|G_{4,k}|)\|_{L^1(Q_{k-1})}\\
&\leq\|v_k\|_{L^{2}(T_{k-1},0;L^{\frac{p}{p-1}}(B_{k-1})))}
\cdot\||G_{1,k}|+|G_{3,k}|\|_{L^{2}(T_{k-1},0;L^{p}(B_{k-1}))}\\
&\quad\quad+\|v_k\|_{L^{2}(T_{k-1},0;L^{2}(B_{k-1})))}
\cdot\||G_{2,k}|+|G_{4,k}|\|_{L^{2}(T_{k-1},0;L^{2}(B_{k-1}))}\\
&\leq C\cdot C_p\cdot 2^{\frac{16k}{3}}U^{\frac{4p-5}{3p}}_{k-1}.
\end{split}\end{equation*}

\noindent By the same way, for $r< s_{1}$ and $p>5$,
\begin{equation*}\begin{split}
(IV)&\leq  C\cdot C_p\cdot\frac{1}{r^3}
 2^{\frac{16k}{3}}U^{\frac{4p-5}{3p}}_{k-1}.
\end{split}\end{equation*}

\noindent Thus, by taking $p=10$,
\begin{equation}\label{(IV)}
(IV)\leq \begin{cases}& C\cdot
 2^{\frac{16k}{3}}U^{\frac{7}{6}}_{k-1}
 ,\quad  \quad\mbox{ if } r\geq s_{1}\\
& C\cdot\frac{1}{r^3}
 2^{\frac{16k}{3}}U^{\frac{7}{6}}_{k-1}
 ,\quad  \quad\mbox{ if } r< s_{1}.
\end{cases}\end{equation}

Finally, combining \eqref{(I)}, \eqref{(II-1)},
 \eqref{(II-2)}, \eqref{(III)} and \eqref{(IV)} gives us
 \begin{equation*} 
(I)+(II)+(III)+(IV)\leq \begin{cases}& C^k\cdot U^{\frac{7}{6}}_{k-1}
 ,\quad  \quad\mbox{ if } r\geq s_{1}\\
& \frac{1}{r^3}\cdot C^k\cdot U^{\frac{7}{6}}_{k-1}
 ,\quad  \quad\mbox{ if } r< s_{1}.
\end{cases}\end{equation*}

\end{proof}

\subsection{De Giorgi argument to get a control for small $r$}\label{proof_lem_partial_2}

The following  big lemma makes us be able to avoid
the weak point of the previous lemma \ref{lem_partial_1}
when we handle small $r$ including the case $r=0$.\\

Recall the definition of $s_k$ in \eqref{def_s_k} first. It is the distance 
between $B^c_{k-1}$ and $B_{k-\frac{5}{6}}$ 
and $s_k$ is strictly decreasing to zero as $k\rightarrow\infty$.
For any $0<r<s_{1}$  we
define $k_r$ as
the  integer such that $s_{k_r+1}< r\leq s_{k_r}$. 
Note that 
$k_r$ is integer-valued, $k_r\geq 1$ 
 and is increasing to $\infty$ as $r$ goes to zero.
 For the case $r=0$, we simply
define $k_r=k_0=\infty$.\\

\begin{lem}\label{lem_partial_2}
There exist universal constants ${\delta}_2$ and $\bar{C}_2>1$  such that 
if u is a solution of (Problem II-r) for some $0\leq r<s_{1}$ verifying both
\begin{equation*}\begin{split}
 &\| u\|_{L^{\infty}(-2,0;L^{2}(B(\frac{5}{4}))}+ 
\|P\|_{L^1(-2,0;L^{1}(B(1))}+\| \nabla u\|_{L^{2}(-2,0;L^{2}(B(\frac{5}{4}))}
\leq {\delta}_2\\  
\mbox{ and } 
& \| \mathcal{M}(|\nabla u|)\|_{L^{2}(-4,0;L^{2}(B(2)))}\leq {\delta}_2,
\end{split}\end{equation*} 
then we have 
\begin{equation*}
U_k \leq  (\bar{C}_2)^k U_{k-1}^{\frac{7}{6}}\quad\text{ for any integer }
k \mbox{ such that } 1\leq k\leq k_r.
\end{equation*}
\end{lem}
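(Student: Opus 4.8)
The plan is to reproduce the De~Giorgi scheme of Lemma~\ref{lem_partial_1}, exploiting the one feature special to the range $1\le k\le k_r$: by the very definition of $k_r$ we have $r\le s_{k_r}\le s_k$, i.e.\ the mollification radius is no larger than the distance between $B_{k-1}^c$ and $B_{k-\frac56}$ \eqref{def_s_k}. Hence on $\operatorname{supp}\psi_k\subset B_{k-\frac56}$ and on $\operatorname{supp}\nabla\eta_k\subset B_{k-\frac13}$ the field $w'=u*\phi_r$ and its derivatives depend only on $u$ restricted to $B_{k-1}\subset B(1)\subset B(\frac54)$. This is what replaces Corollary~\ref{convolution_cor}: using \eqref{young} together with the parabolic embedding $L^\infty_tL^2_x\cap L^2_tW^{1,2}_x\hookrightarrow L^{10/3}_{t,x}\cap L^4_tL^3_x$ (via a fixed cut-off supported in $B(\frac54)$), I get, with constants \emph{independent of} $r$,
\[
\|w\|_{L^{10/3}(Q_{k-1})}+\|w\|_{L^4(-2,0;L^3(B_{k-1}))}\le C\delta_2,\qquad
\|\nabla w\|_{L^2(-2,0;L^2(B_{k-1}))}\le C\delta_2,
\]
and, since $|\nabla w|\le C\,\mathcal M(|\nabla u|)$ pointwise, also $\|\nabla w\|_{L^2(-2,0;L^2(B(2)))}\le C\delta_2$; moreover $\||w|\,|u|\|_{L^2(-2,0;L^{3/2}(B_{1/6}))}\le C\delta_2$ exactly as in the small-$r$ part of the proof of Lemma~\ref{lem_partial_1}. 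With $\delta_2$ small (and $\le\delta_1$), Lemma~\ref{lem_pressure_decomposition} then applies with $A_{ij}=w_iu_j$, giving $P=P_{1,k}+P_{2,k}+P_3$ with \eqref{eq_pressure_decomposition_p1k}--\eqref{eq_pressure_decomposition_p3}, a well-defined $E_k$ with $0\le E_k\le1$, and the truncated inequality \eqref{eq_suitable_inequality_for_v_k_prob_II_r} verbatim.

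Next I would run the iteration word for word as in Lemma~\ref{lem_partial_1}, reaching $U_k\le(I)+(II)+(III)+(IV)$ with the terms of \eqref{1234_decompo_2}. Terms $(I)$ and $(III)$ do not see $w$ at all: $(I)$ uses only \eqref{raise_of_power}, so $(I)\le C2^{10k}U_{k-1}^{5/3}$; $(III)$ uses only the $P_{1,k}$-bound \eqref{eq_pressure_decomposition_p1k} and \eqref{raise_of_power}, so $(III)\le C2^{43k/3}U_{k-1}^{4/3}$; both are $\le(\bar C_2)^kU_{k-1}^{7/6}$. For $(II)=\int_{Q_{k-1}}|\nabla\eta_k|\,|w|\,|v_k|^2$ I would split in space as $|w|\in L^2$, $|v_k|^2\in L^3$ (from $v_k\in L^6_x$), $\mathbf 1_{\{v_k>0\}}\in L^6$, and in time as $L^\infty_t\cdot L^2_t\cdot L^\infty_t$, using the $r$-free bound $\|w\|_{L^\infty(-2,0;L^2(B_{k-1}))}\le C\delta_2$, the embedding $\|v_k\|_{L^2(T_{k-1},0;L^6(B_{k-1}))}\le CU_{k-1}^{1/2}$ and Remark~\ref{lem12_39} (with $q=6$); this gives $(II)\le C2^{O(k)}\delta_2\,U_{k-1}^{7/6}$.

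The main obstacle is $(IV)$, the local pressure term, where the large-$r$ argument of Lemma~\ref{lem_partial_1} fails because it used $\nabla w\in L^2_tL^\infty_x$, unavailable for small $r$. Decompose $\nabla P_{2,k}=G_{1,k}+G_{2,k}+G_{3,k}+G_{4,k}$ as there. The pieces carrying an extra $v_k$ inside ($G_{2,k},G_{4,k}$) are controlled in $L^2(T_{k-1},0;L^2(\mathbb R^3))$ by $\|w\|_{L^{10/3}(Q_{k-1})}$, $\|\nabla w\|_{L^2(Q_{k-1})}$ and $\|v_k\|_{L^\infty_tL^2_x}$, and paired against $\|v_k\|_{L^2(Q_{k-1})}$ via \eqref{raise_of_power}, reaching exponent $\ge\frac76$. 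The bounded-$u$ piece $G_{1,k}$ (which contains $w$ but not $\nabla w$) lies in $L^{10/3}_{t,x}(\mathbb R^3)$ with norm $\lesssim 2^{3k}\|w\|_{L^{10/3}(B_{k-\frac56})}\lesssim 2^{3k}\delta_2$ by Calder\'on--Zygmund theory, and pairs against $\|v_k\|_{L^{10/7}(Q_{k-1})}\lesssim 2^{O(k)}U_{k-1}^{7/6}$ from \eqref{raise_of_power}. The delicate term is $G_{3,k}=\sum_{ij}(\partial_i\nabla)(-\Delta)^{-1}\big(\psi_k(\partial_jw_i)u_j(1-\tfrac{v_k}{|u|})\big)$: a direct pairing of $v_k$ against $G_{3,k}$ in $L^2$ only gives $U_{k-1}^{5/6}$, too weak for the recursion. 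To fix this I would integrate by parts \emph{before} applying the singular integral,
\[
\psi_k(\partial_jw_i)u_j(1-\tfrac{v_k}{|u|})
=\partial_j\!\big(\psi_kw_iu_j(1-\tfrac{v_k}{|u|})\big)
-(\partial_j\psi_k)w_iu_j(1-\tfrac{v_k}{|u|})
-\psi_kw_i\,\partial_j\!\big(u_j(1-\tfrac{v_k}{|u|})\big),
\]
use $\ebdiv u=0$ to rewrite the last bracket as $-\psi_kw_i\,u\!\cdot\!\nabla(\tfrac{v_k}{|u|})$, whose size is $\le C\psi_k|w|\,d_k(1+\sqrt{v_k})$ on $\{v_k>0\}$ (since $d_k^2\ge\tfrac{E_k}{|u|}|\nabla|u||^2\mathbf 1_{\{|u|\ge E_k\}}$ and $|u|/E_k\le1+4v_k$ there), and move the derivative in the divergence term onto the order-zero kernel, then pair the result against $\nabla(\eta_kv_k)$, again bounded by $d_k+C2^{3k}v_k$ on $\{v_k>0\}$. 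In this way every residual factor of $\nabla w$ (equivalently $\nabla u$) is traded for a $d_k$ sitting \emph{next to} $v_k$ or $\sqrt{v_k}$, and one closes using $v_k|\nabla u|\mathbf 1_{\{v_k>0\}}\le Cd_k(\sqrt{v_k}+v_k)$ together with $\|d_k\|_{L^2(Q_{k-1})}^2\le U_{k-1}$, $\|v_k\|_{L^1(Q_{k-1})}\lesssim2^{O(k)}U_{k-1}^{5/3}$ and $\|v_k\|_{L^2(Q_{k-1})}\lesssim2^{O(k)}U_{k-1}^{5/6}$, which yield exponent $\ge\frac43$ for the remaining pieces. Collecting $(I)$--$(IV)$ and choosing $\delta_2$ small gives $U_k\le(\bar C_2)^kU_{k-1}^{7/6}$ for $1\le k\le k_r$.

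The part I expect to be hardest is exactly this last rebalancing of $(IV)$: checking that after the integration by parts every surviving term genuinely carries a power $U_{k-1}^{1+\varepsilon}$, and — the difficulty the paper stresses — that every constant produced along the way (the $\Lambda_1,\Lambda_2$ of the pressure decomposition, the Calder\'on--Zygmund and Riesz constants, the embedding constants, and those in \eqref{raise_of_power}--\eqref{raise_of_power2}) is independent of $r\in[0,s_1)$, so that $\delta_2$ and $\bar C_2$ can be fixed uniformly; the internality of $*\phi_r$ on $B_{k-1}$ for $k\le k_r$ is precisely what makes this uniformity possible here, just as the $(1+4/r)^3$ of Corollary~\ref{convolution_cor} made it possible in the complementary range $r\ge s_1$ of Lemma~\ref{lem_partial_1}.
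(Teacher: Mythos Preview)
Your setup through \eqref{1234_decompo_1}--\eqref{1234_decompo_2} is correct, and your treatments of $(I)$, $(II)$, $(III)$ are fine; in particular your $r$-free bound $\|w\|_{L^\infty_tL^2_x(B_{k-1})}\le C\delta_2$ and the pairing you describe for $(II)$ do yield $U_{k-1}^{7/6}$. The gap is in $(IV)$.

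After your integration by parts on $G_{3,k}$, the ``divergence'' piece becomes, once the extra $\nabla$ is moved onto the test function, the pairing of $\nabla(\eta_k v_k u/|u|)$ (bounded by $C2^{3k}v_k+3d_k$ on $\{v_k>0\}$) against $R_iR_j\big(\psi_k w_i u_j(1-\tfrac{v_k}{|u|})\big)$. Since $|u(1-v_k/|u|)|\le1$, this Riesz transform is controlled in $L^p$ by $\|w\|_{L^p(B_{k-\frac56})}$, and for small $r$ the best $r$-independent information on $w$ is $L^2_tL^6_x$ (equivalently $L^{10/3}_{t,x}$). The $d_k$-part of the pairing is then at best
\[
\|d_k\|_{L^2(Q_{k-1})}\,\|\mathbf 1_{\{v_k>0\}}\|_{L^\infty_tL^3_x}\,\|R[\cdots]\|_{L^2_tL^6_x}
\ \lesssim\ U_{k-1}^{1/2}\cdot U_{k-1}^{1/3}\cdot\delta_2
\ =\ \delta_2\,U_{k-1}^{5/6},
\]
which is \emph{sublinear} and does not close the recursion; an elementary Hölder bookkeeping shows you would need $w\in L^{10}$ to reach $7/6$ via $\|d_k\|_{L^{10/9}}\lesssim U_{k-1}^{7/6}$. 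The same obstruction hits $G_{4,k}$: your claimed bound $\|G_{4,k}\|_{L^2_tL^2_x}\lesssim\|\nabla w\|_{L^2}\|v_k\|_{L^\infty_tL^2_x}$ would require $\nabla w\in L^2_tL^\infty_x$, which is precisely what fails without the $r^{-3}$ factor.

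The missing idea (and the heart of the paper's proof) is to decompose the \emph{mollified advection itself} through the truncation of $u$: write
\[
w'=u*\phi_r=\big(u(1-\tfrac{v_k}{|u|})\big)*\phi_r+\big(u\tfrac{v_k}{|u|}\big)*\phi_r=:w'^{,1}+w'^{,2}.
\]
Then $|w'^{,1}|\le1$ pointwise (since $|u(1-v_k/|u|)|\le E_k\le1$), so $w'^{,1}-w''\in L^\infty_{t,x}$ with $r$-free bound; and because $r\le s_k$ for $k\le k_r$, Young's inequality gives $\|w'^{,2}\|_{L^p(B_{k-\frac56})}\le\|v_k\|_{L^p(B_{k-1})}$, so $w'^{,2}$ carries its own power of $U_{k-1}$. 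With this splitting, the $(w'^{,1}-w'')$-part of the troublesome Riesz term lands in $L^{10}$ and pairs against $\|d_k\|_{L^{10/9}}\lesssim U_{k-1}^{7/6}$, while the $w'^{,2}$-part already contains an extra $v_k$ inside the Riesz transform, which supplies the missing power when paired against $d_k\in L^2$. You correctly identified that $r\le s_k$ makes $*\phi_r$ ``internal'' on $B_{k-1}$, but that localization has to be applied to the truncated pieces of $u$ inside the mollifier, not to $w$ as a whole.
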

\begin{rem}
Note that ${\delta}_2$ and $\bar{C}_2$ are independent of any $r\in[0, s_1)$
and the exponent $7/6$ is not optimal and we can make it almost $(4/3)$.
\end{rem}
\begin{rem}
This lemma says that even though $r$ is very small, we can make the above
uniform estimate for the first few steps $k\leq k_r$. Moreover, the number $k_r$ of
these steps is increasing to the infinity with a certain rate as $r$ goes to zero. In the subsection \ref{combine_de_giorgi}, we will see that this rate is enough to obtain a uniform estimate
for any small $r$ once we combine two lemmas \ref{lem_partial_1}
and \ref{lem_partial_2}.
\end{rem}
\begin{proof}
In this proof, we can borrow any inequalities 
in the proof of the previous lemma \ref{lem_partial_1}
 except those which depend on $r$ and blow up as $r$ goes to zero.\\

Let $0\leq r<s_{1}$ and take any integer k such that $1\leq k\leq k_r$. 
Like ${\delta}_1$ of the previous lemma \ref{lem_partial_1}, we assume ${\delta}_2$  so small that
\begin{equation*}\begin{split}
{\delta}_2<1,\quad 10{\Lambda_1} {\Lambda_2}{\delta}_2\leq\frac{1}{2}.
\end{split}\end{equation*}

We begin this proof by decomposing $w^\prime$ by
\begin{equation*}\begin{split}
w^\prime
=u*\phi_r
&=\Big(u(1-\frac{v_k}{|u|})\Big)*\phi_r 
+ \Big(u\frac{v_k}{|u|}\Big)*\phi_r =w^{\prime,1} + w^{\prime,2}.
\end{split}\end{equation*}
 Thus the advection velocity $w$ has a new decomposition: $w=w^{\prime} -w^{\prime\prime}=
(w^{\prime,1} +w^{\prime,2}) -w^{\prime\prime}
=(w^{\prime,1} -w^{\prime\prime}) +w^{\prime,2} $. 
We will verify that $w^{\prime,1} -w^{\prime\prime}$ is bounded and 
$w^{\prime,2}$ can be controlled locally.
First, for $w^{\prime,1}$,
\begin{equation}\begin{split}\label{w_1_prime}
|w^{\prime,1}(t,x)|=\Big|\Big(\Big(u(1-\frac{v_k}{|u|})\Big)*\phi_r\Big)(t,x)\Big|
&\leq\|u(1-\frac{v_k}{|u|})(t,\cdot)\|_{L^{\infty}(\mathbb{R}^3)} \leq 1
\end{split}\end{equation} for any $-4\leq t $ and any $x\in\mathbb{R}^3$.
From \eqref{w_prime_prime_small_r},
we still have
\begin{equation}\begin{split}\label{w_prime_prime_small_r_again}
\| w^{\prime\prime}\|_{L^\infty(-2,0;L^{\infty}(B(2)))}\leq {C}\bar{\delta}\leq C.
\end{split}\end{equation}

\noindent Combining above two results,
\begin{equation}\begin{split}\label{w_1_prime_w_prime_prime_small_r}
\||w^{\prime,1}|+|w^{\prime\prime}|\|_{L^\infty(-2,0;L^{\infty}(B(2)))}\leq C.
\end{split}\end{equation}

\noindent For $w^{\prime,2}$, we observe that any $L^p$ 
norm of $w^{\prime,2}=\Big(u\frac{v_k}{|u|}\Big)*\phi_r$ 
in $B_{k-\frac{5}{6}}$ is less than or equal to that of $v_k$ in $B_{k-1}$ because
 $r\leq s_{k_r}\leq s_{k}$ and $s_k$ 
is the distance between $B^c_{k-1}$ and $B_{k-\frac{5}{6}}$ (see 
\eqref{young}). Thus we have, for any
$1\leq p\leq\infty$,
\begin{equation}\begin{split}\label{w_2_prime_small_r}
\|w^{\prime,2}&\|_{L^{p}(T_{k-1},0;L^{p}(B_{k-\frac{5}{6}}))}
 = \| \Big(u\frac{v_k}{|u|}\Big)*\phi_r\|_{L^{p}(T_{k-1},0;L^{p}(B_{k-\frac{5}{6}}))}\\
 &= \| |v_k|*\phi_r\|_{L^{p}(T_{k-1},0;L^{p}(B_{k-\frac{5}{6}}))}
\leq\| v_k\|_{L^{p}(Q_{k-1})}.
 \end{split}\end{equation}
So, by using  \eqref{raise_of_power}, we have 
 \begin{equation}\begin{split}\label{raise_of_power_ w_prime_2}
&\| w^{\prime,2}\|_{L^{p}(T_{k-1},0;L^{p}(B_{k-\frac{5}{6}}))}
\leq C2^{\frac{7k}{3}}U^{\frac{5}{3p}}_{k-1}, 
\quad\mbox{ for any } 1\leq p\leq\frac{10}{3}.
\end{split}\end{equation}

\begin{rem}
The above computations says that, for any small $r$,
the advection velocity $w$ can be decomposed into
one bounded part $(w^{\prime,1} -w^{\prime\prime})$ and the other part 
$w^{\prime,2}$, which has a good contribution
to the power of $U_{k-1}$. 
\end{rem}
Recall that the transpost term estimate \eqref{w_iu_j} is valid  for any
 $0<r<\infty$. Moreover, 
 the argument around  \eqref{w_u} says that  \eqref{w_iu_j} holds even for the case $r=0$.
 Thus, for any $r\in[0,s_1)$, we have the same pressure estimates
\eqref{eq_pressure_decomposition_p1k},
\eqref{eq_pressure_decomposition_p2k} and
\eqref{eq_pressure_decomposition_p3}. Thus
we can follow the proof of the previous lemma \ref{lem_partial_1} up to
\eqref{1234_decompo_1} without any single modification. 
It remains to  control  $(I)$--$(IV)$.  \\ 

 For $(I)$, \eqref{(I)} holds here too because \eqref{(I)} is independent of $r$.\\

 For $(II)$, by using \eqref{w_1_prime_w_prime_prime_small_r}
 and \eqref{raise_of_power_ w_prime_2} with
the fact  $supp(\eta_k)\subset B_{k-\frac{1}{3}}\subset B_{k-\frac{5}{6}}$, we have
\begin{equation}\begin{split}\label{second_(II)}
&(II)=\||\nabla\eta_k|\cdot|w|\cdot|v_k|^2\|_{L^1(Q_{k-1})}\\
&\leq C2^{3k}\Big(\|(|w^{\prime,1}|+|w^{\prime\prime}|)\cdot|v_k|^2\|_{L^1(Q_{k-1})}
+\||w^{\prime,2}|\cdot|v_k|^2\|_{L^{1}(T_{k-1},0;L^1(B_{k-\frac{5}{6}}))}\Big)\\
&\leq C2^{3k}\|v_k\|^2_{L^2(Q_{k-1})}
+C2^{3k}\|w^{\prime,2}\|_{L^{\frac{10}{3}}(T_{k-1},0;L^\frac{10}{3}(B_{k-\frac{5}{6}}))}\cdot
\||v_k|^2\|_{L^\frac{10}{7}(Q_{k-\frac{5}{6}})}\\
&\leq C2^{\frac{23k}{3}}U^{\frac{5}{3}}_{k-1}
+C2^{10k}U^{\frac{5}{3}}_{k-1}\leq C2^{10k}U^{\frac{5}{3}}_{k-1}.
\end{split}\end{equation}

 For  $(III)$(non-local pressure term),
 we have  \eqref{(III)} here too since \eqref{(III)} is independent of $r$.\\
 
 For $(IV)$(local pressure term), 
by definition of $P_{2,k}$ and decomposition of $w$,
\begin{equation*}\begin{split}
-\Delta P_{2,k}
&= \sum_{ij} \partial_i \partial_j\Big( \psi_k w_i u_j(1-\frac{v_k}{|u|})
+ \psi_k w_i u_j\frac{v_k}{|u|}\Big)\\
&= \sum_{ij} \partial_i \partial_j\Big(  \psi_k (w^{\prime,1}_i-w^{\prime\prime}_i) u_j(1-\frac{v_k}{|u|})+\psi_k w^{\prime,2}_i u_j(1-\frac{v_k}{|u|})\\
&\quad\quad+ \psi_k (w^{\prime,1}_i-
w^{\prime\prime}_i) u_j\frac{v_k}{|u|}+ \psi_k
 w^{\prime,2}_i u_j\frac{v_k}{|u|}\quad\Big).
\end{split}\end{equation*}

\noindent Thus we can decompose $ P_{2,k}$ by
\begin{equation*}\begin{split}
 P_{2,k}= P_{2,k,1}+P_{2,k,2}+P_{2,k,3}+P_{2,k,4}
\end{split}\end{equation*} where 
\begin{equation*}\begin{split}
&P_{2,k,1} =\sum_{ij} (\partial_i\partial_j)(-\Delta)^{-1}\Big(
\psi_k (w^{\prime,1}_i-w^{\prime\prime}_i) u_j(1-\frac{v_k}{|u|})\Big),\\
&P_{2,k,2} =\sum_{ij} (\partial_i\partial_j)(-\Delta)^{-1}\Big(
\psi_k w^{\prime,2}_i u_j(1-\frac{v_k}{|u|})\Big),\\
&P_{2,k,3} =\sum_{ij} (\partial_i\partial_j)(-\Delta)^{-1}\Big(
 \psi_k (w^{\prime,1}_i-w^{\prime\prime}_i) u_j\frac{v_k}{|u|}\Big)\quad\mbox{ and}\\
&P_{2,k,4} =\sum_{ij} (\partial_i\partial_j)(-\Delta)^{-1}\Big(
\psi_k w^{\prime,2}_i u_j\frac{v_k}{|u|}\Big).
\end{split}\end{equation*} 
By using $\Big|u\Big(1-\frac{v_k}{|u|}\Big)\Big|\leq 1$ 
and the fact 
$\psi_k$ is supported in $B_{k-\frac{5}{6}}$ with
 \eqref{w_1_prime_w_prime_prime_small_r},
\begin{equation}\begin{split}\label{second_P_{2,k,1}}
\|P_{2,k,1}\|_{L^p(T_{k-1},0;L^p(\mathbb{R}^3))}&\leq C_p,\quad\mbox{ for } 1<p<\infty\\
\end{split}\end{equation} and, 
with
 \eqref{raise_of_power_ w_prime_2},
\begin{equation}\begin{split}\label{second_P_{2,k,2}}
\|P_{2,k,2}\|_{L^p(T_{k-1},0;L^p(\mathbb{R}^3))}&\leq
C_p\||\psi_k|\cdot |w^{\prime,2}|\|_{L^p(T_{k-1},0;L^{p}(\mathbb{R}^3)))}\\
&\leq CC_p2^{\frac{7k}{3}}U^{\frac{5}{3p}}_{k-1}\quad\mbox{ for } 1\leq p\leq\frac{10}{3}.\\
\end{split}\end{equation}
Observe that for $i=1,2$, 
\begin{equation}\begin{split}\label{second_P_{2,k,1}_P_{2,k,2}}
&\ebdiv \Big(u G_i\Big) + \Big(\frac{v_k}{|u|}-1\Big)u\cdot\nabla G_i
=\ebdiv \Big(v_k\frac{u}{|u|}G_i\Big) - 
G_i\ebdiv\Big(\frac{uv_k}{|u|}\Big).
\end{split}\end{equation}
For $P_{2,k,1}$, by using \eqref{d_k}, \eqref{raise_of_power}, \eqref{raise_of_power3},
\eqref{second_P_{2,k,1}_P_{2,k,2}} and \eqref{second_P_{2,k,1}} with $p=10$
\begin{equation}\begin{split}\label{second_P_{2,k,1}_TOTAL}
&\int_{T_{k-1}}^{0}\Big|\int_{\mathbb{R}^3}\eta_k(x)
\Big(\ebdiv (u P_{2,k,1}) + (\frac{v_k}{|u|}-1)u\cdot\nabla P_{2,k,1}\Big)(s,x)dx\Big|ds\\
&\leq C^{3k}\|v_k\cdot|P_{2,k,1}|\|_{L^1(Q_{k-1})} +
 3\|d_k\cdot|P_{2,k,1}|\|_{L^1(Q_{k-1})}\\
 &\leq C^{3k}\|v_k\|_{L^\frac{10}{9}(Q_{k-1})}
 \cdot\|P_{2,k,1}\|_{L^{10}(Q_{k-1})} +
 3\|d_k\|_{L^\frac{10}{9}(Q_{k-1})}\cdot\|P_{2,k,1}\|_{L^{10}(Q_{k-1})}\\
  &\leq C2^{\frac{16k}{3}}U^{\frac{3}{2}}_{k-1}+
 C2^{\frac{5k}{3}}U^{\frac{7}{6}}_{k-1}
 \leq C2^{\frac{16k}{3}}U^{\frac{7}{6}}_{k-1}.
\end{split}\end{equation}
Likewise, for $P_{2,k,2}$, by using \eqref{second_P_{2,k,2}} instead of \eqref{second_P_{2,k,1}}
\begin{equation}\begin{split}\label{second_P_{2,k,2}_TOTAL}
&\int_{T_{k-1}}^{0}\Big|\int_{\mathbb{R}^3}\eta_k(x)
\Big(\ebdiv (u P_{2,k,2}) + (\frac{v_k}{|u|}-1)u\cdot\nabla P_{2,k,2}\Big)(s,x)dx\Big|ds\\
  &\leq C2^{\frac{23k}{3}}U^{\frac{5}{3}}_{k-1}+
 C2^{4k}U^{\frac{4}{3}}_{k-1}
 \leq C2^{\frac{23k}{3}}U^{\frac{4}{3}}_{k-1}.
\end{split}\end{equation}

\noindent From definitions of
$P_{2,k,3}$ and $P_{2,k,4}$
with $\ebdiv(w)=0$, we have
\begin{equation*}\begin{split}
-\Delta\nabla (P_{2,k,3}+P_{2,k,4})
&= \sum_{ij} \partial_i \partial_j\nabla\Big( \psi_k w_i u_j\frac{v_k}{|u|}\Big)\\
&= \sum_{ij} \nabla \partial_j
\Big(  (\partial_i\psi_k) w_i
u_j\frac{v_k}{|u|}+
\psi_k  w_i
\partial_i(u_j\frac{v_k}{|u|})\Big).
%
\end{split}\end{equation*}

\noindent Then we use the fact 
$w=(w^{\prime,1} -w^{\prime\prime}) +w^{\prime,2} $
   so that
 we can decompose 
\begin{equation*}
 \nabla(P_{2,k,3}+P_{2,k,4})=H_{1,k}+H_{2,k}+H_{3,k}+H_{4,k}
\end{equation*}
where
\begin{equation*}\begin{split}
H_{1,k}&= \sum_{ij}( \nabla \partial_j)(-\Delta)^{-1}
\Big((\partial_i\psi_k) (w^{\prime,1}_i-w^{\prime\prime}_i)
u_j\frac{v_k}{|u|} \Big),\\
H_{2,k}&= \sum_{ij}( \nabla \partial_j)(-\Delta)^{-1}
\Big((\partial_i\psi_k) w^{\prime,2}_i
u_j\frac{v_k}{|u|} \Big),\\
H_{3,k}&= \sum_{ij}( \nabla \partial_j)(-\Delta)^{-1}
\Big(\psi_k (w^{\prime,1}_i-w^{\prime\prime}_i)
\partial_i(u_j\frac{v_k}{|u|}) \Big)\quad\mbox{ and}\\
H_{4,k}&= \sum_{ij}( \nabla \partial_j)(-\Delta)^{-1}
\Big(\psi_k w^{\prime,2}_i
\partial_i(u_j\frac{v_k}{|u|}) \Big).
\end{split}\end{equation*}
By using $|u|\leq 1+v_k$,
\begin{equation}\begin{split}\label{second_P_{2,k,3}_P_{2,k,4}_TOTAL}
&\int_{T_{k-1}}^{0}\Big|\int_{\mathbb{R}^3}\eta_k(x)
\Big(\ebdiv (u (P_{2,k,3}+P_{2,k,4})) + (\frac{v_k}{|u|}-1)u\cdot
\nabla (P_{2,k,3}+P_{2,k,4})\Big)
dx\Big|ds\\
&\leq C^{3k}\int_{Q_{k-1}}(1+v_k
)\cdot|(P_{2,k,3}+P_{2,k,4})(s,x)|+|\nabla (P_{2,k,3}+P_{2,k,4})
|dxds\\
&\leq C^{3k}\Big(\|P_{2,k,3}\|_{L^1(Q_{k-1})}+\|v_k\cdot|P_{2,k,3}|\|_{L^1(Q_{k-1})}\\
&\quad\quad+\|P_{2,k,4}\|_{L^1(Q_{k-1})}
+\|v_k\cdot|P_{2,k,4}|\|_{L^1(Q_{k-1})}\\
&\quad\quad+\|H_{1,k}\|_{L^1(Q_{k-1})}+\|H_{2,k}\|_{L^1(Q_{k-1})}+\|H_{3,k}\|_{L^1(Q_{k-1})}
+\|H_{4,k}\|_{L^1(Q_{k-1})}\Big).\\
\end{split}\end{equation} 
From \eqref{raise_of_power} and \eqref{w_1_prime_w_prime_prime_small_r} with
the Riesz transform,
\begin{equation}\begin{split}\label{second_P_{2,k,3}_TOTAL}
\|P_{2,k,3}\|_{L^1(Q_{k-1})}&
\leq C\|P_{2,k,3}\|_{L^\frac{10}{9}(T_{k-1},0;L^\frac{10}{9}(\mathbb{R}^3))}
\leq C\|v_k\|_{L^\frac{10}{9}(Q_{k-1})}
 \leq C2^{\frac{7k}{3}}U^{\frac{3}{2}}_{k-1}. 
\end{split}\end{equation}

\noindent  Likewise
\begin{equation}\begin{split}\label{second_H_{1,k}_TOTAL}
\|H_{1,k}\|_{L^1(Q_{k-1})}&
 \leq C2^{\frac{16k}{3}}U^{\frac{3}{2}}_{k-1} 
\end{split}\end{equation} and 
\begin{equation}\begin{split}\label{second_v_k_P_{2,k,3}_TOTAL}
\|v_k\cdot|P_{2,k,3}|\|_{L^1(Q_{k-1})}
&\leq \|v_k\|_{L^2(Q_{k-1})}
\|P_{2,k,3}\|_{L^2(Q_{k-1})}\\
&\leq  C2^{\frac{7k}{3}}U^{\frac{5}{6}}_{k-1}\cdot
 C2^{\frac{7k}{3}}U^{\frac{5}{6}}_{k-1}
 \leq  C2^{\frac{14k}{3}}U^{\frac{5}{3}}_{k-1}.\\
\end{split}\end{equation}
Using \eqref{raise_of_power}, \eqref{raise_of_power_ w_prime_2},
\eqref{d_k} and \eqref{raise_of_power3}, we have
\begin{equation}\begin{split}\label{second_P_{2,k,4}_TOTAL}
\|P_{2,k,4}\|_{L^1(Q_{k-1})}
&\leq C2^{\frac{14k}{3}}U^{\frac{3}{2}}_{k-1},
\end{split}\end{equation}
\begin{equation}\begin{split}\label{second_H_{2,k}_TOTAL}
\|H_{2,k}\|_{L^1(Q_{k-1})}
&\leq C2^{\frac{23k}{3}}U^{\frac{3}{2}}_{k-1},
\end{split}\end{equation}
\begin{equation}\begin{split}\label{second_v_k_P_{2,k,4}_TOTAL}
\|v_k\cdot|P_{2,k,4}|\|_{L^1(Q_{k-1})}
 &\leq  C2^{\frac{21k}{3}}U^{\frac{5}{3}}_{k-1},
\end{split}\end{equation}
\begin{equation}\begin{split}\label{second_H_{3,k}_TOTAL}
\|H_{3,k}\|_{L^1(Q_{k-1})}
&\leq C2^{\frac{5k}{3}}U^{\frac{7}{6}}_{k-1} 
\end{split}\end{equation} and
\begin{equation}\begin{split}\label{second_H_{4,k}_TOTAL}
\|H_{4,k}\|_{L^1(Q_{k-1})}
&\leq C2^{4k}U^{\frac{7}{6}}_{k-1}.
\end{split}\end{equation}
Combining \eqref{second_P_{2,k,1}_TOTAL},
\eqref{second_P_{2,k,2}_TOTAL} and
\eqref{second_P_{2,k,3}_P_{2,k,4}_TOTAL} together with
\eqref{second_P_{2,k,3}_TOTAL}, $\cdots$, 
\eqref{second_H_{4,k}_TOTAL}, we obtain 
\begin{equation}\begin{split}\label{second_(IV)}
&(IV)\leq C 2^{\frac{23k}{3}} U^{\frac{7}{6}}_{k-1}.
\end{split}\end{equation}

Finally we combine  \eqref{second_(II)} and \eqref{second_(IV)} together with \eqref{(I)} and \eqref{(III)} in the previous lemma
in order to finish the proof of this lemma \ref{lem_partial_2}.
\end{proof}

\subsection{Combining the two De Giorgi arguments}\label{combine_de_giorgi}
First we present one small lemma. Then the actual proof
of the proposition \ref{partial_problem_II_r} will follow.
The following small lemma says that certain non-linear estimates
give zero limit if the initial term  is sufficiently small.
This fact is one of key arguments of De Giorgi method.
\begin{lem}\label{lem_recursive}
 Let $C>1$ and $\beta>1$. Then there exists a constant $C_0^{*}$ 
such that for every sequence verifying both $ 0 \leq W_0 < C^{*}_0$ 
and \begin{equation*}
 0\leq W_{k} \leq  C^k \cdot W_{k-1}^{\beta} \quad \mbox{ for any } k\geq 1,
\end{equation*} we have $\lim_{k \to\infty} W_k = 0$.


\end{lem}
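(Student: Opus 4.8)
The plan is to exhibit an explicit geometric \emph{super-solution} of the recursion: I will show that once the threshold $C_0^{*}$ is chosen small enough, one in fact has $W_k \le A\,B^{-k}$ for every $k\ge 0$, with constants $A>0$ and $B>1$ depending only on $C$ and $\beta$. Since $B>1$, this bound forces $W_k\to 0$. The motivation is that a naive iteration of $W_k\le C^k W_{k-1}^{\beta}$ gives the unwieldy estimate $W_k\le C^{\,k+\beta(k-1)+\beta^2(k-2)+\cdots}\,W_0^{\beta^k}$, whose decay is awkward to read off; the geometric ansatz replaces this by a clean one-step induction.

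To pin down admissible $A,B$: assuming $W_{k-1}\le A\,B^{-(k-1)}$, the recursion yields $W_k\le C^k A^{\beta} B^{-\beta(k-1)}$, and I want this majorized by $A\,B^{-k}$. Dividing through by $A\,B^{-k}$, the requirement becomes
\begin{equation*}
 C^k\, A^{\beta-1}\, B^{\,\beta-(\beta-1)k}\ \le\ 1\qquad\text{for all } k\ge 1 .
\end{equation*}
Writing the left side as $A^{\beta-1}B^{\beta}\,\bigl(C\,B^{-(\beta-1)}\bigr)^{k}$, I first choose $B$ so large that $C\,B^{-(\beta-1)}<1$; concretely $B:=C^{2/(\beta-1)}$, for which $C\,B^{-(\beta-1)}=C^{-1}<1$. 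Then the $k$-dependent factor is largest at $k=1$, so the whole family of inequalities reduces to the single condition $A^{\beta-1}B C\le 1$, i.e. $A\le (BC)^{-1/(\beta-1)}$. Accordingly I set $A:=(BC)^{-1/(\beta-1)}$ and $C_0^{*}:=A$, both depending only on $C$ and $\beta$.

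With these choices the proof is a one-line induction. The base case is exactly the hypothesis $W_0<C_0^{*}=A=A\,B^{0}$. For the inductive step, if $W_{k-1}\le A\,B^{-(k-1)}$ then $W_k\le C^k W_{k-1}^{\beta}\le C^k A^{\beta} B^{-\beta(k-1)}\le A\,B^{-k}$, the last inequality being precisely $C^k A^{\beta-1} B^{\beta-(\beta-1)k}\le 1$, which holds for all $k\ge 1$ by construction (with equality at $k=1$). Hence $0\le W_k\le A\,B^{-k}$ for every $k$, and letting $k\to\infty$ gives $\lim_{k\to\infty}W_k=0$ since $B>1$.

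I do not expect a genuine obstacle here: the only real content is guessing the right majorant, and once the ansatz $A\,B^{-k}$ is in place everything is elementary exponent bookkeeping (one could equivalently absorb the $C$-powers via a shift, writing $W_k\le B^{-(k+k_0)}$ with a suitable offset $k_0$). I would state the lemma with these explicit constants, so that the later recursions of the form $U_k\le (\bar C)^{k}U_{k-1}^{7/6}$ coming from Lemmas \ref{lem_partial_1}--\ref{lem_partial_2} can quote a concrete smallness threshold for $U_0$.
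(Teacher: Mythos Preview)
Your argument is correct. The paper does not actually supply a proof of this lemma; it simply remarks that the statement is standard and refers to Lemma~1 of \cite{vas:partial}, so your explicit geometric-majorant induction is a perfectly good way to fill in the details and matches the spirit of the standard proof.
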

\begin{proof}
It is quite standard or see the lemma 1 in \cite{vas:partial}.
\end{proof}

Finally we are ready to prove the proposition \ref{partial_problem_II_r}.
\begin{proof}[Proof of proposition \ref{partial_problem_II_r}]
Suppose that u is a solution of (Problem II-r) for some $0\leq r<\infty$ verifying 
\begin{equation*}\begin{split}
 &\| u\|_{L^{\infty}(-2,0;L^{2}(B(\frac{5}{4})))}+ 
\|P\|_{L^1(-2,0;L^{1}(B(1)))}+\| \nabla u\|_{L^{2}(-2,0;L^{2}(B(\frac{5}{4})))}
\leq {\delta}\\ \mbox{ and } 
&\| \mathcal{M}(|\nabla u|)\|_{L^{2}(-4,0;L^{2}(B(2)))}\leq {\delta}\\
\end{split}\end{equation*} where $\delta$ will be chosen within the proof.\\

From two big lemmas \ref{lem_partial_1} and \ref{lem_partial_2} by assuming
$\delta\leq\min({\delta}_1,\delta_2)$, we have
\begin{equation}\label{before_combine}
 U_k \leq 
\begin{cases}& (\bar{C}_1)^k U_{k-1}^{\frac{7}{6}} ,
\quad \mbox{ for any  } k\geq 1 \quad\mbox{ if } r\geq s_{1}.\\
 &\frac{1}{r^3}\cdot (\bar{C}_1)^k U_{k-1}^{\frac{7}{6}} 
,\quad \mbox{ for any  } k\geq 1  \quad\mbox{ if } 0<r< s_{1}.\\
&(\bar{C}_2)^k U_{k-1}^{\frac{7}{6}} 
\quad\text{ for } k = 1,2,\cdots,k_r \quad\mbox{ if } 0\leq r< s_{1}.
\end{cases}
\end{equation}
Note that  $k_r=\infty$ if $r=0$. Thus we can combine
the case $r=0$ with the case $r\geq s_1$ into one estimate:
\begin{equation*}
 U_k \leq 
 (\bar{C}_3)^k U_{k-1}^{\frac{7}{6}} 
\quad \mbox{ for any } k\geq 1 \quad\mbox{ if either } r\geq s_1 \mbox{ or } r=0.
\end{equation*} where we define $\bar{C}_3=\max(\bar{C}_1,\bar{C}_2)$.\\

We consider now the case $0<r<s_1$. 
Recall that
$s_k=D\cdot 2^{-3k}$ where $D=\Big((\sqrt{2}-1)2\sqrt{2}\Big)>1$
and $s_{k_r+1}< r\leq s_{k_r}$ for any $r\in(0,s_1)$. It gives us
$r\geq D\cdot2^{-3(k_r+1)}$. 
  Thus if $k\geq k_r$ and if $0<r< s_1$, then the second line in \eqref{before_combine} becomes
\begin{equation}\begin{split}
 U_k 
&\leq\frac{1}{r^3}\cdot (\bar{C}_1)^k U_{k-1}^{\frac{7}{6}}
\leq\frac{2^{9{(k_r+1)}}}{D^3}\cdot (\bar{C}_1)^k U_{k-1}^{\frac{7}{6}}\\
&\leq{2^{9{(k+1)}}}\cdot (\bar{C}_1)^k U_{k-1}^{\frac{7}{6}}
\leq({2^{18}}\cdot\bar{C}_1)^k U_{k-1}^{\frac{7}{6}}.
\end{split}\end{equation}
So we have for any $r\in(0,s_1)$,
\begin{equation*}
 U_k \leq 
\begin{cases}
 & ({2^{18}}\cdot\bar{C}_1)^k U_{k-1}^{\frac{7}{6}} 
,\quad \mbox{ for any } k\geq k_r. \\
&(\bar{C}_2)^k U_{k-1}^{\frac{7}{6}}\quad\text{ for }
k = 1,2,\cdots,k_r.
\end{cases}
\end{equation*}
Define $\bar{C}=\max({2^{18}}\cdot\bar{C}_1,\bar{C}_2,\bar{C}_3)
=\max({2^{18}}\cdot\bar{C}_1,\bar{C}_2)$. Then we can combine
all three cases $r=0$, $0<r<s_1$ and $s_1\leq r<\infty$ into one uniform estimate:
\begin{equation*}
 U_k \leq 
 (\bar{C})^k U_{k-1}^{\frac{7}{6}} 
\quad \mbox{ for any } k\geq 1 \quad\mbox{ and for any } 0\leq r<\infty.
\end{equation*}

Finally, by using the recursive lemma 
\ref{lem_recursive}, we obtain $C^{*}_0$ such  that $U_k\rightarrow 0$ as
$k\rightarrow 0$ whenever $ U_0 < C^{*}_0$. This condition $ U_0 < C^{*}_0$ 
is achievable once
we assume $\delta$ so small that 
${\delta}\leq\sqrt{\frac{C^{*}_0}{2}}$
because
\begin{equation*}\begin{split}
U_0
&\leq \Big(\| u\|_{L^{\infty}(-2,0;L^{2}(B(\frac{5}{4})))}+ 
\|P\|_{L^1(-2,0;L^{1}(B(1)))}+
\| \nabla u\|_{L^{2}(-2,0;L^{2}(B(\frac{5}{4})))}\big)^2.
\end{split}\end{equation*} 
Thus we fix ${\delta}=\min(\sqrt{\frac{C^{*}_0}{2}},\delta_1,\delta_2)$ 
which does not depend on any $r\in [0,\infty)$. 
Observe that  for any $k\geq 1$,
\begin{equation*}
 \sup_{-\frac{3}{2}\leq t\leq 0}\int_{B(\frac{1}{2})}(|u(t,x)|-1)^2_{+}dx \leq U_k
\end{equation*} from $E_k\leq 1 $ and
 $ (-\frac{3}{2},0)\times B(\frac{1}{2})\subset Q_k$. Due to the fact
$U_k\rightarrow 0$, the conclusion of this proposition \ref{partial_problem_II_r}  follows.
\end{proof}
\section{Proof of the second local study proposition \ref{local_study_thm}}\label{proof_local study}

First we present technical lemmas, whose proofs will be given in the appendix.
In the subsection \ref{new_step1_2_together}, it will be explained how to apply the previous local study
proposition \ref{partial_problem_II_r} in order to get a $L^\infty$-bound of 
the velocity $u$ . Then, the subsections \ref{new_step3} and
\ref{new_step4} will give us $L^\infty$-bounds for classical derivatives $\nabla^d u$
and for fractional derivatives $(-\Delta)^{\alpha/2}\nabla^d u$, respectively.
\subsection{Some lemmas}

The following lemma is an estimate about higher derivatives of pressure
which we can find a similar lemma 
in \cite{vas:higher}. However they are different in the sense that
here we require $(n-1)$th order norm of $v_1$ to control $n$th 
derivatives of pressure (see \eqref{ineq_parabolic_pressure}) while in \cite{vas:higher} we require one more order,
i.e. $n$th order. This fact follows the divergence structure and it 
will be useful for a bootstrap argument in the 
subsection \ref{new_step3}
 when large $r$ is large (we will see \eqref{step3_second_claim}). 
\begin{lem}\label{higher_pressure} Suppose that we have  $v_1,v_2\in 
(C^\infty(B(1)))^3$ with $\ebdiv v_1=\ebdiv v_2=0$ and $P \in 
C^\infty(B(1))$  which satisfy
 \begin{equation*}\begin{split} 
-\Delta P &=\ebdiv\ebdiv(v_2\otimes v_1)
\end{split}\end{equation*}
on $B(1)\subset \mathbb{R}^3$.\\

\noindent Then, for  any $n\geq 2$, $0<b<a<1$ and $1<p<\infty$, we have the
two following estimates:
\begin{equation}\begin{split}\label{ineq_parabolic_pressure} 
\|\nabla^n P\|_{L^{p}(B(b))}
&\leq C_{(a,b,n,p)}\Big(
\|  v_2 \|_{W^{n-1,p_2}(B(a))}\cdot
\|  v_1 \|_{W^{n-1,p_1}(B(a))}\\
&\quad\quad\quad\quad\quad\quad+\| P \|_{L^{1}(B(a))}\Big)
\end{split}\end{equation}  where
 $\frac{1}{p}=\frac{1}{p_1}+\frac{1}{p_2}$, and
\begin{equation}\begin{split}\label{ineq_parabolic_pressure2} 
\|\nabla^n P\|_{L^{\infty}(B(b))}
&\leq C_{(a,b,n)}\Big(
\|  v_2 \|_{W^{n,\infty}(B(a))}\cdot
\|  v_1 \|_{W^{n,\infty}(B(a))}\\
&\quad\quad\quad\quad\quad\quad+\| P \|_{L^{1}(B(a))}\Big)
\end{split}\end{equation} 

\noindent Note that such constants are independent of any $v_1,v_2$ and $P$. Also,
$\infty$ is allowed for $p_1$ and $p_2$. e.g. if $p_1=\infty$, then $p_2=p$.
\end{lem}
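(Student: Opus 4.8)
The plan is to combine the standard potential-theoretic interior estimate for the Poisson equation with one algebraic observation that uses \emph{both} divergence-free conditions. The key point is that, writing $(v_2\otimes v_1)_{ij}=v_{2,i}v_{1,j}$, one has
\begin{equation*}
\ebdiv\ebdiv(v_2\otimes v_1)=\partial_i\partial_j(v_{2,i}v_{1,j})=\partial_i\big((\partial_j v_{2,i})\,v_{1,j}\big)=(\partial_j v_{2,i})(\partial_i v_{1,j}),
\end{equation*}
the first reduction using $\ebdiv v_1=\partial_j v_{1,j}=0$ and the second using $\ebdiv v_2=\partial_i v_{2,i}=0$. Hence $-\Delta P=\sum_{ij}(\partial_i v_{1,j})(\partial_j v_{2,i})$ on $B(1)$, i.e. the source term is a pointwise product of \emph{first}-order derivatives of $v_1$ and $v_2$. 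This is exactly what turns the naive requirement ``$v_1\otimes v_2\in W^{n,\cdot}$'' into ``$\nabla v_1,\nabla v_2\in W^{n-2,\cdot}$'', that is $v_1,v_2\in W^{n-1,\cdot}$, and is the source of the improvement over the analogous lemma of \cite{vas:higher}.

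Next I would localize. Fix radii $b<b'<a$ and a cut-off $\eta\in C_c^\infty(B(a))$ with $\eta\equiv1$ on a neighborhood of $\overline{B(b')}$. Since $\overline{B(a)}\subset B(1)$, the function $\eta P$ lies in $C_c^\infty(\mathbb{R}^3)$ and
\begin{equation*}
-\Delta(\eta P)=\eta\sum_{ij}(\partial_i v_{1,j})(\partial_j v_{2,i})-2\ebdiv(P\nabla\eta)+P\Delta\eta=:G_{\mathrm{main}}+G_{\mathrm{err}},
\end{equation*}
where I have rewritten $-2\nabla\eta\cdot\nabla P-P\Delta\eta=-2\ebdiv(P\nabla\eta)+P\Delta\eta$ so that $G_{\mathrm{err}}$ contains $P$ but no derivative of $P$. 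Representing $\eta P=(-\Delta)^{-1}(G_{\mathrm{main}}+G_{\mathrm{err}})$ by convolution with the Newtonian potential $N(x)=c_3/|x|$, and noting $\nabla^n P=\nabla^n(\eta P)$ on $B(b)$, it remains to estimate the two contributions.

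For the main term, use that $\nabla^n(-\Delta)^{-1}$ has order $n-2$: write it as $[\nabla^2(-\Delta)^{-1}]\circ\nabla^{n-2}$, so that $L^p$-boundedness ($1<p<\infty$) of $\nabla^2(-\Delta)^{-1}$ together with the Leibniz rule and Hölder's inequality with $\frac1p=\frac1{p_1}+\frac1{p_2}$ gives
\begin{equation*}
\|\nabla^n(-\Delta)^{-1}G_{\mathrm{main}}\|_{L^p(\mathbb{R}^3)}\le C\|\nabla^{n-2}G_{\mathrm{main}}\|_{L^p}\le C_{a,b,n,p}\,\|v_1\|_{W^{n-1,p_1}(B(a))}\|v_2\|_{W^{n-1,p_2}(B(a))}.
\end{equation*}
For \eqref{ineq_parabolic_pressure2} the same scheme applies but with $\nabla^n(-\Delta)^{-1}=[\nabla(-\Delta)^{-1}]\circ\nabla^{n-1}$; since the kernel of $\nabla(-\Delta)^{-1}$ behaves like $|x|^{-2}$, which is locally integrable in $\mathbb{R}^3$, this operator sends compactly supported $L^\infty$ functions into $L^\infty_{loc}$, so one loses exactly one derivative and obtains the bound by $\|v_1\|_{W^{n,\infty}(B(a))}\|v_2\|_{W^{n,\infty}(B(a))}$. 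For $G_{\mathrm{err}}$, note it is supported in the annulus $B(a)\setminus B(b')$, at distance $\rho:=b'-b>0$ from $B(b)$; hence on $B(b)$ the kernels $\nabla^n N(x-\cdot)$ and $\nabla^{n+1}N(x-\cdot)$ (the latter produced by moving the divergence in $\ebdiv(P\nabla\eta)$ onto the potential) are smooth and bounded by $C(n,\rho)$, so $\|\nabla^n(-\Delta)^{-1}G_{\mathrm{err}}\|_{L^\infty(B(b))}\le C(n,\rho,\eta)\,\|P\|_{L^1(B(a))}$, and for \eqref{ineq_parabolic_pressure} this is absorbed via $L^\infty(B(b))\hookrightarrow L^p(B(b))$ on the bounded set $B(b)$. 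Adding the two contributions yields both estimates.

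I expect the only genuine subtlety to be the $L^\infty$ statement \eqref{ineq_parabolic_pressure2}: because the order-$(n-2)$ operator $\nabla^n(-\Delta)^{-1}$ is not bounded on $L^\infty$, one is forced to route one derivative through the locally integrable kernel of $\nabla(-\Delta)^{-1}$, which is precisely why a full $W^{n,\infty}$ norm of the $v_i$ (rather than $W^{n-1,\infty}$) appears there; in the $L^p$ case the gain comes entirely from the divergence-free identity of the first paragraph. Everything else — the product and commutator estimates coming from the Leibniz rule, and the constants depending only on $\eta$ and on $\rho=b'-b$ — is routine.
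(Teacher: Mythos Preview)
Your proof is correct and follows the same overall scheme as the paper: exploit the divergence-free identity $\ebdiv\ebdiv(v_2\otimes v_1)=(\partial_j v_{2,i})(\partial_i v_{1,j})$, localize with a cutoff, and split into a main piece handled by Riesz-transform boundedness and an error piece supported in an annulus where the Newtonian kernel is smooth.

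There are two minor procedural differences worth noting. First, the paper differentiates before localizing: it fixes $D^\alpha$ with $|\alpha|=n$, writes $-\Delta(D^\alpha P)=\partial_{\alpha_1}\partial_{\alpha_2}H$ with $H=D^\beta\big(\sum(\partial_j v_{2,i})(\partial_i v_{1,j})\big)$ and $|\beta|=n-2$, and only then multiplies by a cutoff $\psi$. This produces three pieces $Q_1,Q_2,Q_3$, and the commutator term $Q_3$ carries $D^\alpha P$ rather than $P$, so the paper must integrate by parts $n$ times against the kernel to recover $\|P\|_{L^1(B(a))}$. Your ordering (localize $\eta P$ first, then take $\nabla^n$) avoids this: the error term $G_{\mathrm{err}}$ already contains only $P$, which is a small but genuine simplification. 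Second, for the $L^\infty$ estimate \eqref{ineq_parabolic_pressure2} the paper routes through Sobolev embedding $W^{1,4}(B(b))\hookrightarrow L^\infty(B(b))$ together with $L^4$-boundedness of the Riesz transforms applied to $\psi H$ and $\nabla(\psi H)$, whereas you use the local integrability of $\nabla N\sim|x|^{-2}$ directly; both cost exactly one extra derivative and yield the same $W^{n,\infty}$ dependence.
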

\begin{proof}
 See the appendix.
\end{proof}

The following is a local result by using a parabolic regularization. 
It will be used in the subsection \ref{new_step3}
 to prove
\eqref{step3_first_claim} and \eqref{step3_second_claim}.
\begin{lem}\label{lem_parabolic_v_1_v_2_pressure}
Suppose that  we have 
smooth solution $(v_1,v_2,P)$ on $Q(1)=(-1,0)\times B(1)$ of
 \begin{equation*}\begin{split} 
&\partial_t(v_1)+\ebdiv(v_2\otimes v_1)+ \nabla P -\Delta v_1=0\\
&\ebdiv (v_1)=0 \mbox{ and } \ebdiv (v_2)=0.
\end{split}\end{equation*}

Then,  for any $n\geq 1$,  $0<b<a<1$, $1< p_1<\infty$ and $1<p_2<\infty$, we have
\begin{equation}\begin{split} \label{ineq_parabolic_v_1_v_2}
&\|\nabla^n v_1 \|_{L^{p_1}(-({b})^2,0  ;L^{p_2}(B({b})))}
\leq C_{(a,b,n,p_1,p_2)}
\Big(\|v_2\otimes v_1 \|_{L^{p_1}(-a^2,0;W^{n-1,p_2}(B(a)))}\\&
\quad\quad\quad\quad\quad\quad\quad\quad+
\| v_1 \|_{L^{p_1}(-a^2,0;W^{n-1,p_2}(B(a)))}+
\| P \|_{L^{1}(-a^2,0;L^{1}(B(a)))}\Big)\\
\end{split}\end{equation} where $v_2\otimes v_1$ is the matrix whose
 $(i,j)$ component is the product of $j$-th component $v_{2,j}$ of $v_2$
 and $i$-th one $v_{1,i}$ of $v_1$ and $\Big(\ebdiv(v_2\otimes v_1)\Big)_i
=\sum_j\partial_j(v_{2,j} v_{1,i})$.\\

Note that such constants are independent of any $v_1,v_2$ and $P$.
\end{lem}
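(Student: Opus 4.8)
The statement to prove is Lemma~\ref{lem_parabolic_v_1_v_2_pressure}, and here is the plan I would follow.

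This is a local interior regularization estimate for the (linear) Stokes system, and I would establish it, for each fixed $n\ge1$, by a single space-time localization together with mixed-norm parabolic maximal regularity. The tool I rely on is the following: if $(W,\Pi)$ solves $\partial_tW-\Delta W+\nabla\Pi=\ebdiv\mathcal{G}+\mathcal{F}$ and $\ebdiv W=\rho$ on $\mathbb{R}^3\times(-L,0)$ with $W|_{t=-L}=0$ and $\mathcal{G},\mathcal{F},\rho\in L^{p_1}(L^{p_2})$, then $\|\nabla W\|_{L^{p_1}(L^{p_2})}\le C\big(\|\mathcal{G}\|_{L^{p_1}(L^{p_2})}+\|\mathcal{F}\|_{L^{p_1}(L^{p_2})}+\|\rho\|_{L^{p_1}(L^{p_2})}\big)$ whenever $1<p_1,p_2<\infty$; here the inhomogeneous divergence is removed by subtracting a Bogovskii-type right inverse of $\ebdiv$, the $\ebdiv\mathcal{G}$ contribution is the parabolic Calder\'on--Zygmund estimate of Lady\v{z}enskaja--Solonnikov--Ural$'$ceva in mixed-norm form, and the $\mathcal{F}$ contribution gains a derivative by fractional integration of order $1/2$ in $t$ over the bounded interval.

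Fix radii $b<b'<a'<a$ and a cutoff $\chi=\chi_1(t)\chi_2(x)$ with $\chi\equiv1$ on $Q(b')$, $\operatorname{supp}\chi\subset Q(a')$, and $\chi_1$ vanishing near $t=-(a')^2$. Since $\ebdiv v_1=0$, the tensor $U:=\nabla^{n-1}v_1$ is (componentwise) divergence free and solves $\partial_tU-\Delta U+\nabla q=-\ebdiv G$ with $q:=\nabla^{n-1}P$ and $G:=\nabla^{n-1}(v_2\otimes v_1)$; hence $W:=\chi U$, with pressure $\Pi:=\chi q$, solves the localized Stokes system with $\ebdiv W=\nabla\chi\cdot\nabla^{n-1}v_1$ and a forcing produced solely by the cutoff commutators. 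Using $\chi\,\ebdiv(\cdot)=\ebdiv(\chi\,\cdot)-\nabla\chi\cdot(\cdot)$ and $\nabla\chi\cdot\nabla(\cdot)=\ebdiv\big((\cdot)\otimes\nabla\chi\big)-(\Delta\chi)(\cdot)$, this forcing is the divergence of terms bounded in $L^{p_1}(L^{p_2})$ by $C\|v_2\otimes v_1\|_{L^{p_1}(W^{n-1,p_2}(B(a)))}$, plus terms directly bounded there by $C\|v_1\|_{L^{p_1}(W^{n-1,p_2}(B(a)))}$, plus exactly one pressure term, $(\nabla^{n-1}P)\nabla\chi$, which appears when $\chi\nabla^nP$ is absorbed into $\nabla\Pi$; the inhomogeneity $\ebdiv W=\nabla\chi\cdot\nabla^{n-1}v_1$ is likewise bounded by $C\|v_1\|_{L^{p_1}(W^{n-1,p_2}(B(a)))}$. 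Since $\chi\equiv1$ and $\nabla\chi=0$ on $Q(b)$, we have $\nabla W=\nabla^nv_1$ there, so the maximal regularity estimate gives the claim once $(\nabla^{n-1}P)\nabla\chi$ is controlled.

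Controlling $(\nabla^{n-1}P)\nabla\chi$ is the heart of the matter, since $P$ is assumed only to lie in $L^1$. From $\ebdiv v_1=0$ one has $-\Delta P=\partial_i\partial_j(v_{2,j}v_{1,i})$ on $B(a)$, and I would split $P=P_{\mathrm{near}}+P_{\mathrm{far}}$ with $P_{\mathrm{near}}:=(-\Delta)^{-1}\partial_i\partial_j(\zeta\,v_{2,j}v_{1,i})$ for a cutoff $\zeta$ equal to $1$ near $\operatorname{supp}\nabla\chi$ and supported in $B(a)$, so that $P_{\mathrm{far}}:=P-P_{\mathrm{near}}$ is harmonic in $x$ on a neighbourhood of $\operatorname{supp}\nabla\chi$. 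Because $(-\Delta)^{-1}\partial_i\partial_j$ is of order zero, $\|\nabla^{n-1}P_{\mathrm{near}}\|_{L^{p_1}(L^{p_2})}\le C\|\nabla^{n-1}(\zeta\,v_2\otimes v_1)\|_{L^{p_1}(L^{p_2})}\le C_a\|v_2\otimes v_1\|_{L^{p_1}(W^{n-1,p_2}(B(a)))}$, so the $P_{\mathrm{near}}$ part of $(\nabla^{n-1}P)\nabla\chi$ is of permitted size; for $n\ge3$ one may instead invoke Lemma~\ref{higher_pressure} for $\nabla^{n-1}P$, re-expanding the product by the Leibniz rule and interpolating. For the harmonic remainder, interior elliptic estimates give $|\nabla^{n-1}P_{\mathrm{far}}(t,x)|\le C\|P_{\mathrm{far}}(t)\|_{L^1(B(a))}$ for $x\in\operatorname{supp}\nabla\chi$, while $\|P_{\mathrm{far}}\|_{L^1(Q(a))}\le\|P\|_{L^1(Q(a))}+\|P_{\mathrm{near}}\|_{L^1(Q(a))}\le\|P\|_{L^1(Q(a))}+C\|v_2\otimes v_1\|_{L^{p_1}(L^{p_2}(B(a)))}$; and, crucially, $(\nabla^{n-1}P_{\mathrm{far}})\nabla\chi$ is supported in $\operatorname{supp}\nabla\chi$, hence at distance $\ge b'-b>0$ from $Q(b)$, so its contribution to $\nabla^nv_1$ on $Q(b)$ — obtained by integrating an $L^1(Q(a))$ datum against the Stokes fundamental solution, which is uniformly bounded on that off-diagonal region — is controlled even in $L^\infty(Q(b))$ by $\|P\|_{L^1(Q(a))}+C\|v_2\otimes v_1\|_{L^{p_1}(L^{p_2}(B(a)))}$. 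Thus the low integrability of the pressure is neutralized by isolating its harmonic part, which lives away from the target cylinder and enters only through a smoothing kernel, while its rough part reproduces exactly the $L^{p_1}(W^{n-1,p_2})$ control of $v_2\otimes v_1$ — which is precisely why only $\|P\|_{L^1}$ appears on the right-hand side.
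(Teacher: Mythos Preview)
Your proposal is correct and is precisely the kind of argument the paper has in mind: the paper omits the proof of this lemma entirely, stating only that it ``is based on the standard parabolic regularization result (e.g.\ Solonnikov) and precise argument is essentially contained in \cite{vas:higher}.'' Your outline---localize $\nabla^{n-1}v_1$ by a space--time cutoff, apply mixed-norm Stokes maximal regularity to the commutator forcing, and neutralize the low-integrability pressure by splitting it into a Riesz-transform part controlled by $v_2\otimes v_1$ and a harmonic remainder whose contribution is off-diagonal and hence smoothed by the Stokes kernel---is exactly this standard machinery, carried out carefully.
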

Proof of this lemma \ref{lem_parabolic_v_1_v_2_pressure}  is omitted because it
is based on the standard parabolic regularization result (e.g. Solonnikov \cite{Solo})
and precise argument
is essentially contained in \cite{vas:higher}
 except that 
here we consider
 \begin{equation*}\begin{split} 
&(v_1)_t+\ebdiv(v_2\otimes v_1)+ \nabla P -\Delta v_1=0\\
\end{split}\end{equation*} while \cite{vas:higher} covered 
\begin{equation*}\begin{split} 
&(u)_t+\ebdiv(u\otimes u)+ \nabla P -\Delta u=0.\\
\end{split}\end{equation*}

The following lemma will be
 used in the subsection \ref{new_step3}
, especially when we prove \eqref{step3_second_claim} for large $r$.
\begin{lem}\label{lem_a_half_upgrading_large_r} 
Suppose that we have 
smooth solution $(v_1,v_2,P)$ on $Q(1)=(-1,0)\times B(1)$ of
 \begin{equation*}\begin{split} 
&\partial_t(v_1)+(v_2\cdot\nabla)(v_1)+ \nabla P -\Delta v_1=0\\
&\ebdiv (v_1)=0 \mbox{ and } \ebdiv (v_2)=0.
\end{split}\end{equation*}


Then, for any $n\geq 0$ and  $0<b<a<1$, we have
 \begin{equation*}\begin{split}  
\|\nabla^n {v_1} &\|_{L^{\infty}(-{({b})}^2,0  ;L^{1}(B{({b})}))}
\leq\\ & C_{(a,b,n)}\Big[
\Big(\| v_2\|_{L^2(-{{a}^2},0;W^{n,\infty}(B({a})))} +1\Big)
\cdot
  \| {v_1} \|_{L^{2}(-{a}^2,0;W^{n,{1}}(B{(a)}))}\\
&\quad\quad\quad\quad\quad\quad\quad\quad+ \|\nabla^{n+1}P \|_{L^{1}(-{a}^2,0;L^{1}(B{(a)}))}  \Big]
\end{split}\end{equation*}
 and, for any ${p}\geq 1$,
  \begin{equation*}\begin{split}  
&\|\nabla^{n} {v_1} \|^{p+\frac{1}{2}}_{L^{\infty}(-{({b})}^2,0  ;L^{p+\frac{1}{2}}(B{({b})}))}\leq\\
& \quad\quad C_{(a,b,n,p)}\Big[
\Big(\| v_2\|_{L^2(-{{a}^2},0;W^{n,\infty}(B({a})))} +1\Big)
\cdot
\|{v_1} \|_{L^{2}(-{({a})}^2,0;W^{n,2p}(B{({a})}))}\\
&\quad\quad\quad\quad\quad\quad+ \|\nabla^{n+1} P\|_{L^{1}(-{({a})}^2,0;L^{2p}(B{({a})}))}
\Big]\cdot
\| {v_1} \|^{p-\frac{1}{2}}_{L^{\infty}(-{({a})}^2,0;W^{n,p}(B{(a)}))}.
\end{split}\end{equation*}  

Note that such constants are independent of any $v_1,v_2$ and $P$.
\end{lem}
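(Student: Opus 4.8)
The plan is to differentiate the equation $n$ times in space, localize, and run two weighted local energy estimates on $g:=\nabla^{n}v_{1}$. Writing $\partial^{\alpha}$ for $|\alpha|=n$ and applying it to $\partial_{t}v_{1}+(v_{2}\cdot\nabla)v_{1}+\nabla P-\Delta v_{1}=0$, Leibniz' rule gives
\begin{equation*}
\partial_{t}g+(v_{2}\cdot\nabla)g-\Delta g=-(\nabla^{n+1}P)_{\alpha}-\sum_{1\le|\beta|\le n}c_{\beta}(\partial^{\beta}v_{2})\cdot(\partial^{\,n-|\beta|+1}v_{1}),\qquad g=\partial^{\alpha}v_{1},
\end{equation*}
where each commutator factor $\partial^{\beta}v_{2}$ and $\partial^{\,n-|\beta|+1}v_{1}$ is a derivative of order at most $n$. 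I would fix $\chi\in C_{c}^{\infty}(B(a))$ with $\chi\equiv1$ on $B(b)$ and a time cutoff $\tau$ with $\tau\equiv1$ on $(-b^{2},0]$ and $\tau\equiv0$ near $-a^{2}$; from now on all space integrals are over $B(a)$ and time integrals over $(-a^{2},0)$.

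For the first estimate I would test the $g$-equation against $\tau\chi^{2}\operatorname{sgn}(g)$, realized through the regularization $|g|_{\varepsilon}=(\varepsilon+|g|^{2})^{1/2}$ with $\varepsilon\to0$. Two integrations by parts in the diffusion term give, up to nonnegative terms, $-\int\tau\,\Delta(\chi^{2})|g|$; the identity $\ebdiv v_{2}=0$ turns the transport term into $\int\tau(\nabla\chi^{2}\cdot v_{2})|g|$; and the fundamental theorem of calculus in $t$ produces $\sup_{t}\int\chi^{2}|g|$. It then remains to bound the $\Delta(\chi^{2})$- and $\tau'$-terms, each $\le C\|\nabla^{n}v_{1}\|_{L^{2}_{t}L^{1}_{x}}$ by Cauchy--Schwarz in $t$ (this is the source of the ``$+1$''); the transport and commutator terms, together $\le C\|v_{2}\|_{L^{2}_{t}W^{n,\infty}_{x}}\|v_{1}\|_{L^{2}_{t}W^{n,1}_{x}}$; and the pressure term $\le\|\nabla^{n+1}P\|_{L^{1}_{t}L^{1}_{x}}$. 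This is the first inequality.

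For the second estimate I would instead test against $\tau\chi^{2}|g|^{\,p-1/2}\operatorname{sgn}(g)$ (again via the $\varepsilon$-regularization, needed since $p-\tfrac12$ may be below $1$). Now the time term yields $\tfrac{1}{p+1/2}\sup_{t}\int\chi^{2}|g|^{\,p+1/2}$, the diffusion term yields the nonnegative bulk term $\int\!\!\int\chi^{2}|g|_{\varepsilon}^{\,p-3/2}|\nabla g|^{2}$ plus a cutoff error $-\tfrac{1}{p+1/2}\int\!\!\int\Delta(\chi^{2})|g|^{\,p+1/2}$, and the transport term becomes $\tfrac{1}{p+1/2}\int\!\!\int(\nabla\chi^{2}\cdot v_{2})|g|^{\,p+1/2}$. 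Every surviving term is a product of $|g|^{\,p-1/2}$, or of $|g|^{\,p+1/2}$ on $\operatorname{supp}\nabla\chi$ or $\operatorname{supp}\tau'$, with one of $|\partial^{\le n}v_{2}|\,|\partial^{\le n}v_{1}|$, $|\nabla^{n+1}P|$ or $1$, and the mechanism is the elementary Hölder pair
\begin{equation*}
\int_{B(a)}|g|^{\,p-1/2}|f|\,dx\le\|g\|_{L^{p}(B(a))}^{\,p-1/2}\|f\|_{L^{2p}(B(a))},\qquad\int_{B(a)}|g|^{\,p+1/2}\,dx\le\|g\|_{L^{p}(B(a))}^{\,p-1/2}\|g\|_{L^{2p}(B(a))},
\end{equation*}
which pulls out $\sup_{t}\|g(t)\|_{L^{p}(B(a))}^{\,p-1/2}\le\|v_{1}\|_{L^{\infty}_{t}W^{n,p}_{x}}^{\,p-1/2}$ and leaves a time integral of $\|f\|_{L^{2p}_{x}}$ which is exactly $\|\nabla^{n+1}P\|_{L^{1}_{t}L^{2p}_{x}}$ for the pressure and, after Cauchy--Schwarz in $t$, $\le C(\|v_{2}\|_{L^{2}_{t}W^{n,\infty}_{x}}+1)\|v_{1}\|_{L^{2}_{t}W^{n,2p}_{x}}$ for everything else, including the cutoff errors. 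Since $\chi\equiv1$ on $B(b)$ and $\tau\equiv1$ on $(-b^{2},0]$, this controls $\|\nabla^{n}v_{1}\|_{L^{p+1/2}((-b^{2},0)\times B(b))}^{\,p+1/2}$, as claimed.

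The routine part is the Leibniz bookkeeping; the genuine point to watch is that the cutoff errors produced by $\Delta(\chi^{2})$, $\nabla\chi$ and $\tau'$ must be closed \emph{without} a Caccioppoli iteration over a chain of radii, which works precisely because the Hölder pair above rewrites $\int|g|^{\,p+1/2}$ on the annulus through $\|g\|_{L^{p}}^{\,p-1/2}\|g\|_{L^{2p}}$, quantities already present on the right, and because the pressure enters only through $L^{1}_{t}L^{2p}_{x}$, the norm dual (in that pairing) to the $L^{\infty}_{t}L^{2p/(2p-1)}_{x}$ bound on $|g|^{\,p-1/2}$ supplied by the $\sup_{t}$ estimate. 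The remaining technicalities — the $L^{1}$-type multiplier and Kato's inequality in the first estimate, and the $\varepsilon$-regularization when $p-\tfrac12<1$ — are standard.
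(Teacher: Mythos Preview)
Your proposal is correct and follows essentially the same route as the paper: differentiate the equation to order $n$, multiply by $\Phi\,|D^{\alpha}v_1|^{\,p-3/2}D^{\alpha}v_1$ (your $\tau\chi^{2}|g|^{\,p-1/2}\operatorname{sgn} g$), use $\operatorname{div}v_2=0$ on the transport term, drop the nonnegative diffusion bulk via the pointwise inequality $(p+\tfrac12)|f|^{p-3/2}f\cdot\Delta f\le\Delta(|f|^{p+1/2})$, and close by the same H\"older/interpolation pair $\int|g|^{p-1/2}|f|\le\|g\|_{L^{p}}^{\,p-1/2}\|f\|_{L^{2p}}$ and $\int|g|^{p+1/2}\le\|g\|_{L^{p}}^{\,p-1/2}\|g\|_{L^{2p}}$, specializing to $p=\tfrac12$ for the $L^{1}$ statement. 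The only cosmetic differences are that the paper uses a single space--time cutoff rather than your product $\tau\chi^{2}$, and computes formally where you invoke the $\varepsilon$-regularization and Kato's inequality explicitly.
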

\begin{proof}
 See the appendix.
\end{proof}

The following non-local version of Sobolev-type lemmas will be useful 
when we  handle fractional derivatives by   Maximal
functions. We will see 
in the subsection \ref{new_step4}
 that the power $({1+\frac{3}{p}})$ of $M$ 
on the right hand side of the following estimate is very important
 to obtain a required estimate \eqref{step4_third_claim}.
\begin{lem}\label{lem_Maximal 2.5 or 4}
 Let $M_0>0$ and $1\leq p <\infty$. Then there exist $C=C(M_0,p)$ with
 the following property: \\

 For any $M\geq M_0$ and 
 for any $f\in C^1(\mathbb{R}^3)$
such that $ \int_{\mathbb{R}^3}\phi(x)f(x)dx=0$, we have
\begin{equation*}
 \|f\|_{L^p(B(M))}\leq CM^{1+\frac{3}{p}}\cdot\Big(
\|\mathcal{M}(|\nabla f|^p)\|^{1/p}_{L^{1}(B(1))}
+\|\nabla f\|_{L^1(B(2))}\Big).
\end{equation*} 

\end{lem}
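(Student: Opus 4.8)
The plan is to split $B(M)$ into a fixed ``core'' ball $B(2)$ and a dyadic sequence of annuli, and to use the mean-zero normalization only to anchor the estimate on the core. First I would treat the core: since $\int_{\mathbb{R}^3}\phi f\,dx=0$ and $\phi$ is supported in $B(1)$ with $\int\phi=1$, for $x\in B(2)$ one writes $f(x)=f(x)-\int\phi(y)f(y)\,dy=\int_{\mathbb{R}^3}\phi(y)\big(f(x)-f(y)\big)\,dy$, estimates $|f(x)-f(y)|\le \int_0^1|\nabla f(y+s(x-y))|\,|x-y|\,ds$, and bounds the resulting integral over the segment by $C\,\mathcal{M}(|\nabla f|)(x')$ for a suitable $x'$; this yields $\|f\|_{L^p(B(2))}\le C\big(\|\mathcal M(|\nabla f|)\|_{L^p(B(2))}+\|\nabla f\|_{L^1(B(2))}\big)$, and then by Jensen and boundedness of $\mathcal M$ from $L^1$ to $L^{1,\infty}$ — or more simply by $\|\mathcal M(|\nabla f|)\|_{L^p(B(2))}\le C\|\mathcal M(|\nabla f|^p)\|_{L^1(B(2))}^{1/p}$ after noting $\mathcal M(|\nabla f|)^p\le C\,\mathcal M(|\nabla f|^p)$ pointwise (this uses $p\ge1$ together with Hölder inside the averaging) — one gets the $L^1(B(1))$-Maximal term on the right. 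Since $M\ge M_0$, the factor $M^{1+3/p}$ trivially dominates the constant here.

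Next I would handle the outer region $B(M)\setminus B(2)$ by a telescoping argument along rays from the origin. For $x$ with $2\le|x|\le M$, write $f(x)=f(2x/|x|)+\int_{2}^{|x|} \partial_\rho f(\rho x/|x|)\,d\rho$, so $|f(x)|\le |f(2x/|x|)|+\int_{2}^{M}|\nabla f|(\rho x/|x|)\,d\rho$. The boundary term $f(2x/|x|)$ lives on $\partial B(2)$ and is controlled exactly as in the core step. For the radial-integral term I would integrate $|f(x)|^p$ over $B(M)$ in polar coordinates, use Hölder in $\rho$ to pull out a factor $M^{p-1}$ (i.e.\ $\big(\int_2^M |\nabla f|\,d\rho\big)^p\le M^{p-1}\int_2^M |\nabla f|^p\,d\rho$), and then the $\rho$-integration against the Jacobian $\rho^2$ over the full ball produces another power of $M$; careful bookkeeping of the exponents gives a bound of the form $M^{p+3}$ times $\int_{B(M)}\!\!\dots$, hence a factor $M^{1+3/p}$ after taking $p$-th roots — the remaining integral being $\int_{B(M)}|\nabla f|^p$, which I then cover by finitely many translates/dilates and compare to $\|\mathcal M(|\nabla f|^p)\|_{L^1(B(1))}$ using that the Maximal function at a point of $B(1)$, taken over the ball of radius $\sim M$, bounds the average of $|\nabla f|^p$ over all of $B(M)$; multiplying by $|B(M)|\sim M^3$ and absorbing that $M^3$ into the prefactor closes the estimate. (Alternatively one can dyadically decompose $B(M)\setminus B(2)$ into annuli $2^{j}\le|x|<2^{j+1}$, $j\lesssim \log_2 M$, estimate each annulus by translating to the core and paying an $M$-power from the number of annuli and their measures, and sum — this route makes the power $1+3/p$ equally transparent and is the one I would ultimately write up.)

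The main obstacle I anticipate is getting the \emph{sharp} exponent $1+3/p$ on $M$ rather than something larger: the decomposition must be arranged so that each ``long-range'' contribution is charged to the single value $\mathcal M(|\nabla f|^p)$ at a point of the \emph{fixed} ball $B(1)$, which forces one to control an average of $|\nabla f|^p$ over the huge ball $B(M)$ by $M^{-3}\|\mathcal M(|\nabla f|^p)\|_{L^1(B(1))}$ — there is exactly one power $M^3$ of slack there, and combined with the $M^{p-1}$ from the Hölder step in the radial integral and the $M^{3}$ from the volume of $B(M)$ it must net out to $(M^{1+3/p})^p=M^{p+3}$. A secondary technical point is the pointwise inequality $\mathcal M(g)^p\le C_p\,\mathcal M(g^p)$ used to replace $\|\mathcal M(|\nabla f|)\|_{L^p}$ by $\|\mathcal M(|\nabla f|^p)\|_{L^1}^{1/p}$; this is immediate from Hölder applied inside each averaging integral and is the reason the hypothesis is stated with $|\nabla f|^p$ inside the Maximal function. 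Everything else — the segment estimate bounding directional integrals of $|\nabla f|$ by $\mathcal M(|\nabla f|)$, the use of the mean-zero property, and the covering arguments — is routine. Full details are deferred to the appendix.
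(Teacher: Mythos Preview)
Your radial-integration plan does not give the claimed exponent. After H\"older in $\rho$ and polar coordinates your outer term is
\[
M^{p-1}\int_{S^2}\!\!\int_2^M\!\Big(\int_\rho^M r^2\,dr\Big)|\nabla f|^p(\rho\omega)\,d\rho\,d\omega
\;\le\; CM^{p+2}\int_{2<|y|<M}\frac{|\nabla f|^p(y)}{|y|^2}\,dy,
\]
not ``$M^{p+3}\!\int_{B(M)}|\nabla f|^p$'' as you write. If you now discard the weight $|y|^{-2}$ and bound $\int_{B(M)}|\nabla f|^p$ by $CM^{3}\,\|\mathcal M(|\nabla f|^p)\|_{L^1(B(1))}$ --- which is exactly the step you call ``absorbing that $M^3$ into the prefactor'' --- you end up with $M^{p+5}$, i.e.\ $\|f\|_{L^p(B(M))}\le CM^{1+5/p}(\ldots)$. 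That extra $M^{2/p}$ is not cosmetic: in the application (Stage~3, the bound on $(III)$) the exponent feeds into a geometric sum over dyadic scales, and with $1+5/p$ that sum diverges for $d=1$ and small $\alpha$. Your alternative dyadic-annulus sketch has the same defect. (There is also a secondary issue: the boundary contribution $f(2x/|x|)$ lives on the \emph{sphere} $\partial B(2)$, so ``exactly as in the core step'' needs a trace argument you have not supplied.) The radial route \emph{can} be repaired, but only if you keep the weight and prove
\[
\int_{2<|y|<M}\frac{|\nabla f|^p}{|y|^2}\,dy\;\le\; CM\,\|\mathcal M(|\nabla f|^p)\|_{L^1(B(1))},
\]
e.g.\ by integrating by parts in the radial variable and using $\int_{B(\rho)}|\nabla f|^p\le C\rho^{3}\mathcal M(|\nabla f|^p)(z)$ for each $z\in B(1)$.

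The paper avoids all of this by not separating core and shell. It writes $f(x)=\int\phi(y)(f(x)-f(y))\,dy$ for \emph{every} $x\in B(M)$, inserts the segment bound $|f(x)-f(y)|\le (M+1)\int_0^1|\nabla f|((1-t)x+ty)\,dt$, and then applies Minkowski to push the $L^p_x$-norm inside the $t$-integral. The key move is to split at $t=M/(M+1)$: for $t\le M/(M+1)$ the change of variable $z=(1-t)x+ty$ maps $B(M)$ into $B((1-t)M+1)$, so the Jacobian loss $(1-t)^{-3/p}$ is exactly compensated by the shrinking of the domain when one bounds $\int_{B((1-t)M+1)}|\nabla f|^p$ by $C((1-t)M+2)^{3}\mathcal M(|\nabla f|^p)(z)$; integrating in $t$ then yields the sharp $M^{1+3/p}$. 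For $t\in[M/(M+1),1]$ the point $(1-t)x+ty$ already lies in $B(2)$, which produces the $\|\nabla f\|_{L^1(B(2))}$ term directly. This built-in shrinking is precisely what your radial decomposition lacks.
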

\begin{proof}
 See the appendix.
\end{proof}

With the above lemmas, we are ready to prove the proposition \ref{local_study_thm}.
\begin{proof} [Proof of proposition \ref{local_study_thm}]
We divide this proof into three stages.\\

Stage 1 in subsection \ref{new_step1_2_together}: First, we will obtain a $L_t^{\infty}L_x^2$-local bound for $u$
by using the mean-zero property of $u$ and $w$.
Then, a $L^\infty$-local bound of $u$
 follows thanks to the first local study
proposition \ref{partial_problem_II_r}.\\


Stage 2 in subsection \ref{new_step3}: We will get a $L^\infty$-local bound for $\nabla^d u$ for $d\geq1$ by using 
 an induction argument with
a boot-strapping.
This is not obvious especially when $r$ is large because $w$
depends a non-local part of $u$ while our knowledge about the $L^\infty$-bound of
$u$ from the stage 1 is only local.\\

Stage 3 in subsection \ref{new_step4}: We will achieve a $L^\infty$-local bound for $(-\Delta)^{\alpha/2}\nabla^d u$ for $d\geq1$ with $0<\alpha<2$ from the integral representation
of the fractional Laplacian. The non-locality of this fractional operator 
will let us to adopt more complicated conditions (see \eqref{local_study_condition3}).\\


\subsection{Stage 1: to obtain $L^{\infty}$-local bound for $u$.}\label{new_step1_2_together}

First we suppose that $u$ satisfies all conditions of the proposition \ref{local_study_thm}
without \eqref{local_study_condition3} (The condition \eqref{local_study_condition3} will be assumed only at the stage 3). Our goal is to find a sufficiently small $\bar{\eta}$ which is independent of
$r\in[0,\infty)$.\\


Assume $\bar{\eta}\leq 1$ and
define $\bar{r}_0=\frac{1}{4}$ for this subsection. 
From \eqref{local_study_condition1}, we get
\begin{equation*}
\|u\|_{L^2(-4,0;L^{6}(B(2)))}\leq C\|\nabla u\|_{L^2(-4,0;L^{2}(B(2)))}
\leq {C}\cdot\bar{\eta}.
\end{equation*}
From the corollary \ref{convolution_cor}, if $r\geq\bar{r}_0$, then
\begin{equation*}\begin{split}
 \| w\|_{L^2(-4,0;L^{\infty}(B(2)))}&
\leq {C}\cdot\bar{\eta}.
\end{split}\end{equation*}
On the other hand, if $0\leq r<\bar{r}_0$, then
\begin{equation*}\begin{split}
 \| w^{\prime}\|_{L^2(-4,0;L^{6}(B(\frac{7}{4})))}
&\leq {C}\| u\|_{L^2(-4,0;L^{6}(B(2)))}
\leq {C}\bar{\eta}
\end{split}\end{equation*} because  $\phi_r$ is supported 
in $B(r)\subset B(\bar{r})$, and $w=u*\phi_r$ (see \eqref{young}). 
For $w^{\prime\prime}$,
\begin{equation*}\begin{split}
&\| w^{\prime\prime}\|_{L^2(-4,0;L^{\infty}(B(2)))}
\leq\|\|u*\phi_{r}\|_{L^1(B(1))}\|_{L^2((-4,0))}
\leq\|\|u\|_{L^1(B(2))}\|_{L^2((-4,0))}\\
&\leq {C}\| u\|_{L^2(-4,0;L^{6}(B(2)))}
\leq {C}\bar{\eta}.
\end{split}\end{equation*}
Thus  $\| w\|_{L^2(-4,0;L^{6}(B(\frac{7}{4})))}
\leq {C}\bar{\eta}$ if $r<\bar{r}_0$ from $w= w^{\prime} +w^{\prime\prime}$.\\

In sum, for any $0\leq r<\infty$, 
\begin{equation}\label{step1_w}
 \| w\|_{L^2(-4,0;L^{6}(B(\frac{7}{4})))}\leq {C}\bar{\eta}.
\end{equation}

Since the equation \eqref{navier_Problem II-r} depends only on $\nabla P$,
without loss of generality, we may assume 
 $\int_{\mathbb{R}^3}\phi(x)P(t,x)=0$ for $t\in(-4,0)$. Then with the mean zero property \eqref{local_study_condition1} of $u$, we have
\begin{equation*}\label{step1_nabla_p}
 \|\int_{\mathbb{R}^3}\phi(x)\nabla P(\cdot,x) dx\|_{L^1(-4,0)}\leq C \bar{\eta}^{\frac{1}{2}}
\end{equation*} after integration in $x$.\\

From Sobolev,
\begin{equation*}\begin{split}           
             \|\nabla P\|
_{L^1(-4,0;L^{\frac{3}{2}}(B(\frac{7}{4}))}
&\leq C \bar{\eta}^{\frac{1}{2}}\\
\mbox{ and } \quad  \| P\|_{L^1(-4,0;L^{3}(B(\frac{7}{4}))}
&
\leq C \bar{\eta}^{\frac{1}{2}}\\
   \end{split}
\end{equation*}

Then we follows step 1 and step 2 of the proof of the proposition 10 in \cite{vas:higher},
we can obtain
\begin{equation*}
 \| u\|_{L^{\infty}(-3,0;L^{\frac{3}{2}}(B(\frac{6}{4})))}
\leq {C}\bar{\eta}^{\frac{1}{3}}.\\ 
\end{equation*} and then
\begin{equation*}
 \| u\|_{L^{\infty}(-2,0;L^{2}(B(\frac{5}{4})))}\leq {C}\bar{\eta}^{\frac{1}{4}} 
\end{equation*} for $0\leq r<\infty$. Details are omitted.\\


Finally, by taking  $0<\bar\eta<1$ such that $C\bar\eta^\frac{1}{4}\leq\bar\delta$,
we have all assumptions of the proposition \ref{partial_problem_II_r}. As a result,
we have 
 $|u(t,x)|\leq 1 \mbox{ on } [-\frac{3}{2},0]\times B(\frac{1}{2})$.\\


\subsection{Stage 2: to obtain $L^\infty$ local bound for $\nabla^d u$.}\label{new_step3}

Here we cover only classical derivatives, i.e. $\alpha=0$. 
For any integer $d\geq1$, our goal is to find $C_{d,0}$ such that
$|((-\Delta)^{\frac{0}{2}}\nabla^d) u(t,x)|=|\nabla^d u(t,x)|\leq C_{d,0}$ on
$(-(\frac{1}{3})^2,0)\times(B(\frac{1}{3}))$. \\
  
We define a strictly decreasing sequence of balls 
and parabolic cylinders from 
$(-(\frac{1}{2})^2,0)\times B(\frac{1}{2})$ 
to $(-(\frac{1}{3})^2,0)\times(B(\frac{1}{3}))$ by
\begin{equation*}\begin{split}
&\bar{B}_n =B(\frac{1}{3}+\frac{1}{6}\cdot 2^{-n})=B(l_n) \\
&\bar{Q}_n=(-(\frac{1}{3}+\frac{1}{6}\cdot 2^{-n})^2,0)\times \bar{B}_n
=(-(l_n)^2,0)\times \bar{B}_n
\end{split}\end{equation*} where $l_n=\frac{1}{3}+\frac{1}{6}\cdot 2^{-n}$.\\

First we claim in order to cover the small $r$ case:\\

  There exist two positive sequences
$\{\bar{r}_n\}_{n=0}^{\infty}$
and $\{C_{n,small}\}_{n=0}^{\infty}$ such that for any integer $n\geq0$
and for any $r\in [0,\bar{r}_n)$,
\begin{equation}\begin{split}\label{step3_first_claim}
           &\| \nabla^n u\|_{L^{\infty}(\bar{Q}_{11n})}\leq  C_{n,small}.   
\end{split}\end{equation}

Indeed, from the previous subsection \ref{new_step1_2_together} (the stage 1), \eqref{step3_first_claim} holds for $n=0$ 
by taking $\bar{r}_0=1$ and $C_{0,small}=1$. 
We define $\bar{r}_n= $ distance between
$B_{11n}$ and $(B_{11n-1})^c$ for $n \geq 1$. 
Then  $\{\bar{r}_n\}_{n=0}^{\infty}$ is decreasing 
to zero as $n$ goes to $\infty$. Moreover,
we
can control $w$  by $u$  
as long as $0\leq r<\bar{r}_n$: for any $n\geq 1$,
\begin{equation}\begin{split}\label{intermidiate balls}
&\|w\|_{L^{p_1}(-(l_{m})^2,0;L^{p_2}({\bar{B}_{m}}))}\leq 
\Big(\|u\|_{L^{p_1}(-(l_{m-1})^2,0;L^{p_2}({\bar{B}_{m-1}}))} + C\Big) \quad\mbox{ and}\\
&\|\nabla^{k}w\|_{L^{p_1}(-(l_{m})^2,0;L^{p_2}({\bar{B}_{m}}))}\leq 
\|\nabla^{k}u\|_{L^{p_1}(-(l_{m-1})^2,0;L^{p_2}({\bar{B}_{m-1}}))}
\end{split}\end{equation} for any integer $m$ such that
 $m\leq 11\cdot n$, for any $ k\geq1$ 
and for any $p_1\in [1,\infty]$ and $p_2\in [1,\infty]$ (see \eqref{young}).\\

We will use an induction with a boot-strapping.
First we fix $d\geq 1$ and suppose that  \eqref{step3_first_claim} is true up to $n=(d-1)$. It implies
for any $r\in [0,\bar{r}_{d-1})$
\begin{equation*}\begin{split}
\|u\|_{L^{\infty}(-l_{s}^2,0;W^{d-1,{\infty}} (\bar{B}_{s}))}
\leq C
\end{split}\end{equation*} where $s=11(d-1)$.
We want to
show that \eqref{step3_first_claim} is also true  for the case $n=d$.\\

\noindent From \eqref{intermidiate balls},
$\|w\|_{L^{\infty }(-l_{s+{1 }}^2,0;W^{{ d-1 } ,{ \infty}} (\bar{B}_{s+{ 1 }}))}\leq C$ and,
From the lemma \ref{lem_parabolic_v_1_v_2_pressure} with
$v_2=w$ and $v_1=u$,\quad
$\|u\|_{L^{16}(-l_{s+{ 2}}^2,0;W^{{ d } ,{32 }} (\bar{B}_{s+{ 2 }}))}\leq C$.
Then, we use \eqref{intermidiate balls} 
and the lemma \ref{lem_parabolic_v_1_v_2_pressure} in turn:
\begin{equation*}\begin{split}
&\rightarrow w\in{L^{16}(-l_{s+{ 3}}^2,0;W^{{ d } ,{32 }} (\bar{B}_{s+{3 }}))}
\rightarrow
 u\in{L^{8}(-l_{s+{ 4}}^2,0;W^{{ d+1 } ,{16 }} (\bar{B}_{s+{ 4 }}))}\\
&\rightarrow w\in{L^{8}W^{{ d+1 } ,{16 }} }
\rightarrow ... \rightarrow
u\in{L^{2}W^{{ d+3 } ,{4 }} }
\end{split}\end{equation*} Then, from Sobolev,
\begin{equation*}\begin{split}
&\rightarrow u\in{L^{2}W^{{ d+2 } ,{\infty}} }\rightarrow
w\in{L^{2}(-l_{s+{ 9}}^2,0;W^{{ d+2 } ,{\infty}} (\bar{B}_{s+{ 9 }}))}.
\end{split}\end{equation*}
This estimate gives us 
\begin{equation*}\begin{split}
&\Delta(\nabla^d u) ,
\ebdiv(\nabla^d(w\otimes u))\mbox{ and }
\nabla(\nabla^dP) \in{L^{1}(-l_{s+{ 10}}^2,0;L^{\infty}(\bar{B}_{s+{10 }}))}
\end{split}\end{equation*}
 where 
we used 
\eqref{ineq_parabolic_pressure2}
for the pressure term. Thus
\begin{equation*}\begin{split}
&\partial_t(\nabla^d u)\in{L^{1}(-l_{s+{ 10}}^2,0;L^{\infty}(\bar{B}_{s+{10 }}))}.
\end{split}\end{equation*}
Finally, we obtain that for any $r\in [0,\bar{r}_{d})$
\begin{equation*}\begin{split}
&\|\nabla^d u\|_{L^{\infty}(-l_{s+{ 11}}^2,0;L^{\infty}(\bar{B}_{s+{11 }}))}\leq C.
\end{split}\end{equation*} where $C$ depends only on $d$. 
By the induction argument, we showed the above claim \eqref{step3_first_claim}. \\

Now we introduce the second claim:\\

 There exist a sequences
 $\{C_{n,large}\}_{n=0}^{\infty}$ such that for any integer $n\geq0$
and for any $r\geq\bar{r}_n$,
\begin{equation}\begin{split}\label{step3_second_claim}
           &\| \nabla^n u\|_{L^{\infty}(\bar{Q}_{21\cdot n})}\leq  C_{n,large} \\   
\end{split}\end{equation} where $\bar{r}_n$ comes from previous claim
\eqref{step3_first_claim}.\\

Before proving the above second claim \eqref{step3_second_claim}, 
we need the following two observations \textbf{(I),(II)}
from the lemmas  \ref{lem_parabolic_v_1_v_2_pressure} and 
\ref{higher_pressure}:\\

\textbf{(I).} From the corollary \ref{convolution_cor} for any $n\geq 0$, 
if $r\geq\bar{r}_n$, then
\begin{equation*}\begin{split}\label{nabla_n_w_large_r}
 \| w\|_{L^2(-4,0;W^{n,\infty}(B(2)))}&
\leq {C_n}.
\end{split}\end{equation*}
We use \eqref{ineq_parabolic_v_1_v_2} in the lemma \ref{lem_parabolic_v_1_v_2_pressure} 
with $v_1=u$ and $v_2=w$. 
Then it
becomes 
\begin{equation}\begin{split} \label{ineq_nabla_n_u_large_r}
\|\nabla^n u \|_{L^{p_1}(-(l_m)^2,0  ;L^{p_2}(\bar{B}_m))}
&\leq C_{(m,n,p_2)}
\Big(\| u \|_{L^{\frac{2p_1}{2-p_1}}
(-(l_{m-1})^2,0;W^{n-1,p_2}({\bar{B}_{m-1}}))}+
1\Big)\\
\end{split}\end{equation}
 for  $n\geq 1$, $m\geq 1$, $1< p_1\leq 2$ and $1<p_2<\infty$. (For
the case $p_1=2$, we may interpret $\frac{2p_1}{2-p_1}=\infty$.)\\

\textbf{(II).} Moreover, \eqref{ineq_parabolic_pressure} in the lemma
\ref{higher_pressure}  becomes
\begin{equation}\begin{split} \label{ineq_nabla_n_pressure_large_r}
\|\nabla^n P\|_{L^{1}(-(l_m)^2,0;L^{p}(\bar{B}_m))}
&\leq C_{(m,n,p)}\Big(
\|  u \|_{L^{2}(-(l_{m-1})^2,0;W^{n-1,p}({\bar{B}_{m-1}}))}+1\Big)
\end{split}\end{equation} for  $n\geq 2$ and $1<p<\infty$.\\

Now we are ready to prove the second claim \eqref{step3_second_claim}
 by an induction with a boot-strapping. From the previous subsection
 \ref{new_step1_2_together} (the stage 1), \eqref{step3_second_claim} holds for $n=0$ 
with  $C_{0,large}=1$. Fix $d\geq 1$ and suppose that we
 have \eqref{step3_second_claim}  up to $n=(d-1)$. It implies
for any $r\geq\bar{r}_{d-1}$
\begin{equation*}\begin{split}
\|u\|_{L^{\infty}(-l_s^2,0;W^{d-1,{\infty}} (\bar{B}_{s}))}
\leq C_{d-1,large}
\end{split}\end{equation*} where $s=21(d-1)$.
We want to
show \eqref{step3_second_claim}  for $n=d$.\\

\noindent By using \eqref{ineq_nabla_n_u_large_r} with $n=d, p_1=2$ and $p_2=11$,
\begin{equation*}\begin{split}
\|u\|_{L^{ 2}(-l_{s+{1 }}^2,0;W^{{ d } ,{11 }} (\bar{B}_{s+{ 1 }}))}\leq C
\end{split}\end{equation*}
and,
from \eqref{ineq_nabla_n_pressure_large_r} with $n=d+1,m=0$ and $p=11$,
\begin{equation*}\begin{split}
\|\nabla^{d+{1  }}P\|_{L^{1 }(-l_{s+{ 2}}^2,0;L^{ 11 } (\bar{B}_{s+{  2}}))}\leq C.
\end{split}\end{equation*}
 Combining the above two results with the 
lemma \ref{lem_a_half_upgrading_large_r} for $v_1=u$ and $v_2=w$
, we can have increased integrability in space by $0.5$ up to $6$:
\begin{equation*}\begin{split}
&\|u\|_{L^{\infty }(-l_{s+{3 }}^2,0;W^{{ d } ,{ 1}} (\bar{B}_{s+{ 3 }}))}\leq C,\\
&\|u\|_{L^{\infty }(-l_{s+{ 4}}^2,0;W^{{ d } ,{1.5 }} (\bar{B}_{s+{ 4 }}))}\leq C,\\
&\quad\quad\quad\quad\quad\quad\cdots, \quad\mbox{ and} \\
&\|u\|_{L^{\infty }(-l_{s+{ 13}}^2,0;W^{{ d } ,{6 }} (\bar{B}_{s+{ 13 }}))}\leq C.\\
\end{split}\end{equation*}
By using \eqref{ineq_nabla_n_u_large_r} and \eqref{ineq_nabla_n_pressure_large_r}
again, we have 
\begin{equation*}\begin{split}
&\|u\|_{L^{ 2}(-l_{s+{14 }}^2,0;W^{{d+1  } ,{ 6}} (\bar{B}_{s+{ 14 }}))}\leq C 
\quad\mbox{and}\\
&\|\nabla^{d+{ 2 }}P\|_{L^{1 }(-l_{s+{ 15}}^2,0;L^{6  } (\bar{B}_{s+{ 15 }}))}\leq C.
\end{split}\end{equation*}
Combining the above two results with  the lemma \ref{lem_a_half_upgrading_large_r} again
, we have
\begin{equation*}\begin{split}
&\|u\|_{L^{\infty }(-l_{s+{16 }}^2,0;W^{{ d+1 } ,{ 1}} (\bar{B}_{s+{ 16 }}))}\leq C,\\
&\quad\quad\quad\quad\quad\quad\cdots,\quad\mbox{ and} \\
&\|u\|_{L^{\infty }(-l_{s+{21 }}^2,0;W^{{ d+1} ,{ 3.5}} (\bar{B}_{s+{21 }}))}\leq C.\\
\end{split}\end{equation*}
Finally, from Sobolev's inequality, 
\begin{equation*}\begin{split}
&\|\nabla^d u\|_{L^{\infty }(-l_{s+{21 }}^2,0;L^{\infty} (\bar{B}_{s+{ 21 }}))}
\leq C
\end{split}\end{equation*} where $C$ depends
 only $d$ not $u$ nor $r$ as long as $r\geq \bar{r}_d$.
From induction, we proved second claim \eqref{step3_second_claim}.\\

Define for any $n\geq0$, $C_{n,0} = \max({C_{n,small},C_{n,large}})$ where
$C_{n,small}$ and $C_{n,large}$ come from \eqref{step3_first_claim} 
and \eqref{step3_second_claim} respectively. 
Then we have:
\begin{equation}\begin{split}\label{step3_conclusion}
           &\| \nabla^n u\|_{L^{\infty}(Q(\frac{1}{3}))}\leq  C_{n,0} \\   
\end{split}\end{equation} for any $n\geq0$ and for any $0\leq r<\infty$
because $Q(\frac{1}{3})\subset \bar{Q}_{n}$. 
It ends this stage 2.\\


\subsection{Stage 3: to obtain $L^\infty$ local bound for $(-\Delta)^{\alpha/2}\nabla^d u$.}\label{new_step4}

From now on, we 
assume further that $(u,P)$ satisfies \eqref{local_study_condition3} as well as 
all the other conditions of the proposition \ref{local_study_thm}.
In the following proof, we will not
divide the proof into a small $r$ part and a large $r$ part.\\

Fix an integer $d\geq 1$ and  
a real $\alpha$ with $0<\alpha<2$.
i.e. any constant which will appear may depend $d$ and $\alpha$. 
But they will be  independent of any $ r\in [0,\infty)$ and any solution $(u,P)$.\\


First, we claim:\\

 There exists a constant $C=C({d,\alpha})$
such that
\begin{equation}\begin{split}\label{step4_first_claim}
|(-\Delta)^{\frac{\alpha}{2}}\nabla^d u(t,x)|
\leq C({d,\alpha})+\Big|\int_{|y|\geq {(1/6)}}
\frac{\nabla^{d} u(t,x-y)}{|y|^{3+\alpha}}dy\Big|
\end{split}\end{equation} for $ |x|\leq (1/6)$ and 
for  $ -(1/3)^2\leq t\leq 0$.\\


\noindent To prove \eqref{step4_first_claim}, 
we first recall the Taylor expansion of any $C^2$ function $f$ at $x$:
$f(y)-f(x)=(\nabla f)(x)\cdot(y-x)+R(x,y)$, and we have 
 an error estimate $|R|\leq C|x-y|^2\cdot\|\nabla^2 f\|_{L^\infty(B(x;|x-y|))}$.
Note that if we integrate the first order term $(\nabla f)(x)\cdot(y-x)$
in $y$ on   any sphere with the center $x$, we have zero by symmetry.
 As a result, if we
take any $x$ and $t$ for $ |x|\leq (1/6)$ and 
for  $ -(1/3)^2\leq t\leq 0$ respectively, then we have 
\begin{equation*}\begin{split}
|(-\Delta)^{\frac{\alpha}{2}}\nabla^d u(t,x)|&
=\Big|P.V.\int_{\mathbb{R}^3}\frac{\nabla^d u(t,x)-\nabla^d u(t,y)}{|x-y|^{3+\alpha}}dy\Big|\\
&\leq \sup_{z\in B((1/3))}(|\nabla^{d+2}u(t,z)|)\cdot\int_{|x-y|< (1/6)}\frac{1}{|x-y|^{3+\alpha-2}}dy\\
&\quad+ \sup_{z\in B((1/3))}(|\nabla^d u(t,z)|)\cdot
\int_{|x-y|\geq (1/6)}\frac{1}{|x-y|^{3+\alpha}}dy\\
&\quad+\Big|\int_{|x-y|\geq (1/6)}\frac{\nabla^d u(t,y)}{|x-y|^{3+\alpha}}dy\Big|\\
&\leq C({d,\alpha}) +\Big|\int_{|y|\geq (1/6)}
\frac{\nabla^d u(t,x-y)}{|y|^{3+\alpha}}dy\Big|
\end{split}\end{equation*} 
where we used 
the result \eqref{step3_conclusion} of the previous 
subsection \ref{new_step3} (the stage 2)
 together with the Taylor expansion of
 $\nabla^d u(t,\cdot)$ at $x$
in order to reduce singularity by $2$ at the origin $x=y$. We
 proved the first claim \eqref{step4_first_claim}.\\

Second, we claim:\\

There exists $C=C({d,\alpha})$ such that
\begin{equation}\begin{split}\label{step4_second_claim}
\Big|\int_{|y|\geq {{(1/6)}}}\frac{\nabla^{d} u(t,x-y)}
{|y|^{3+\alpha}}dy\Big|
\leq C({d,\alpha})+\sum_{j=k}^{\infty}(\frac{1}{2^{\alpha}})^j\cdot
|({(h^{\alpha})_{2^j}}*\nabla^d u)(t,x)|\\
\end{split}\end{equation} 
 for $ |x|\leq (1/6)$ and 
for  $ -(1/3)^2\leq t\leq 0$ where
$k$ is the integer such that
$2^k\leq (1/6)< 2^{k+1}$.(i.e. from now on, we fix $ k=-3$).  Recall that $ h^\alpha$ is defined around \eqref{property_h}.\\

\noindent To prove the above second claim \eqref{step4_second_claim}: (Recall \eqref{property_zeta}
and \eqref{property_h})
\begin{equation*}\begin{split}
&\Big|\int_{|y|\geq {{(1/6)}}}\frac{\nabla^{d} u(t,x-y)}
{|y|^{3+\alpha}}dy\Big|=
\Big|\int_{|y|\geq {{(1/6)}}}\sum_{j=k}^{\infty}\zeta
(\frac{y}{2^j})\frac{\nabla^{d} u(t,x-y)}
{|y|^{3+\alpha}}dy\Big|\\
&=\Big|\int_{|y|\geq {{(1/6)}}}\sum_{j=k}^{\infty}
\frac{1}{(2^j)^{\alpha }} \cdot {(h^{\alpha})_{2^j}}(y)\nabla^{d} u(t,x-y)dy\Big|\\
\end{split}\end{equation*}
\begin{equation*}\begin{split}
&\leq\sum_{j=k}^{k+1}\frac{1}{(2^j)^{\alpha }} 
\cdot\Big|\int_{|y|\geq {{(1/6)}}} {(h^{\alpha})_{2^j}}(y)\nabla^{d} u(t,x-y)dy\Big|\\
&\quad+\sum_{j=k+2}^{\infty}\frac{1}{(2^j)^{\alpha }}
 \cdot\Big|\int_{|y|\geq {{(1/6)}}} {(h^{\alpha})_{2^j}}(y)\nabla^{d} u(t,x-y)dy\Big|\\
&=(I)+(II).
\end{split}\end{equation*}
\noindent For $(I)$,
\begin{equation*}\begin{split}
(I)&\leq\sum_{j=k}^{k+1}\frac{1}{(2^j)^{\alpha }} 
\cdot\Big(\Big|\int_{\mathbb{R}^3} {(h^{\alpha})_{2^j}}(y)\nabla^{d} u(t,x-y)dy\Big|\\
&\quad\quad\quad\quad\quad\quad\quad\quad + 
\int_{|y|\leq {{(1/6)}}} |{(h^{\alpha})_{2^j}}(y)|\cdot|\nabla^{d} u(t,x-y)|dy\Big)\\
&\leq\sum_{j=k}^{k+1}\frac{1}{(2^j)^{\alpha }}
 \Big(| ({(h^{\alpha})_{2^j}}*\nabla^{d} u)(t,x)|
 + C\cdot \sup_{z\in B(1/3)}|\nabla^{d} u(t,z)|\Big)\\
&=  \sum_{j=k}^{k+1}(\frac{1}{2^{\alpha}})^j\cdot| ({(h^{\alpha})_{2^j}}
*\nabla^{d} u)(t,x)|+C({d,\alpha}).
\end{split}\end{equation*}
\noindent For $(II)$, by using $supp( h^{\alpha}_{2^j})
\subset (B(2^{j-1}))^C\subset (B(1/6))^C$
for any $j\geq k+2$,
\begin{equation*}\begin{split}
(II)&=
\sum_{j=k+2}^{\infty}\frac{1}{(2^j)^{\alpha }} 
\cdot\Big|\int_{\mathbb{R}^3} {(h^{\alpha})_{2^j}}(y)\nabla^{d} u(t,x-y)dy\Big|\\
&= \sum_{j=k+2}^{\infty}(\frac{1}{2^{\alpha}})^j
\cdot| ({(h^{\alpha})_{2^j}}*\nabla^{d} u)(t,x)|.
\end{split}\end{equation*} We showed the second claim
\eqref{step4_second_claim}.\\

Third, we claim:\\

There exists $C=C({d,\alpha})$ such that
\begin{equation}\begin{split}\label{step4_third_claim}
\|{(h^{\alpha})_M}*\nabla^d u\|_{L^{\infty}( -(1/6)^2,0;L^1(B(1/6)))} 
\leq C({d,\alpha})\cdot M^{1-d}\\
\end{split}\end{equation} for any $M\geq 2^k$. (Recall
$k= -3$.)\\

\noindent To prove the above third claim \eqref{step4_third_claim}, 
take a convolution first with
 $ \nabla^d[{(h^{\alpha})_M}]$ into the equation 
\eqref{navier_Problem II-r}.
Then we have
\begin{equation*}\begin{split}
(\nabla^d [{(h^{{\alpha}})_M}]*u)_t &+(\nabla^d [{(h^{{\alpha}})_M}]*\Big((w\cdot\nabla)u\Big))\\&
 +(\nabla^d [{(h^{{\alpha}})_M}]*\nabla P) -(\nabla^d [{(h^{{\alpha}})_M}]*\Delta u)=0
\end{split}\end{equation*} so that
\begin{equation*}\begin{split}
( \nabla^{d-1}[{(h^{{\alpha}})_M}]&*\nabla u)_t +(\nabla^d {[(h^{{\alpha}})_M}]*\Big((w\cdot\nabla)u\Big))\\& +(\nabla^{d-1}[{(h^{{\alpha}})_M}]*\nabla^2 P) -\Delta( 
\nabla^{d-1}
[{(h^{{\alpha}})_M}]* \nabla u)=0.
\end{split}\end{equation*}

\noindent Define a cut-off $\Phi(t,x)$ by
\begin{equation*}\begin{split} 
&0\leq\Phi(x)\leq 1 \quad , \quad
supp(\Phi)\subset (-4,0)\times B({2})\\
&\Phi(t,x) = 1 \mbox{ for } (t,x)\in  (-(1/6)^2,0)\times B({(1/6)}).
  \end{split}
\end{equation*}

\noindent Multiply $\Phi(t,x)\frac{(\nabla^{d-1}[{(h^{{\alpha}})_M}]*\nabla u)(t,x)}{|(\nabla^{d-1}[{(h^{{\alpha}})_M}]*\nabla u) (t,x)|}$, then
integrate in $x$:
\begin{equation*}\begin{split} 
&\frac{d}{dt}\int_{\mathbb{R}^3}\Phi(t,x)|(\nabla^{d-1}[{(h^{{\alpha}})_M}]*
\nabla u) (t,x)|dx\\
&\leq\int_{\mathbb{R}^3}(|\partial_t\Phi(t,x)|+|\Delta\Phi(t,x)|)
|(\nabla^{d-1}[{(h^{{\alpha}})_M}]*\nabla u) (t,x)|dx \\
&\quad\quad+\int_{\mathbb{R}^3}|\Phi(t,x)||(\nabla^{d-1}[{(h^{{\alpha}})_M}]*\nabla^2 P)|dx \\
&\quad\quad+\int_{\mathbb{R}^3}|\Phi(t,x)||\nabla^d [{(h^{{\alpha}})_M}]*
\Big((w\cdot\nabla)u\Big)|dx.
\end{split}\end{equation*} 

\noindent Then integrating on $[-4,t]$ for  any $t\in[-(1/6),0]$ gives 
\begin{equation*}\begin{split} 
&\|{(h^{{\alpha}})_M}*\nabla^d u\|_{L^{\infty}( -(1/6)^2,0;L^1(B(1/6)))}\\ 
&=\|\nabla^{d-1}[{(h^{{\alpha}})_M}]*\nabla u\|_{L^{\infty}( -(1/6)^2,0;L^1(B(1/6)))}\\ 
&\leq C\Big(\|\nabla^{d-1}[{(h^{{\alpha}})_M}]*\nabla u\|_{L^{1}( -4,0;L^1(B(2)))} \\
&+ \|\nabla^{d-1}[{(h^{{\alpha}})_M}]*\nabla^2 P\|_{L^{1}( -4,0;L^1(B(2)))} \\
&+ \|\nabla^d [{(h^{{\alpha}})_M}]*\Big((w\cdot\nabla)u\Big)\|
_{L^{1}( -4,0;L^1(B(2)))}\Big) \\
&=(I) + (II)+ (III).
\end{split}\end{equation*} 

For $(I)$, we use simple observations 
$\nabla^m[(f)_\delta]=\delta^{-m}\cdot(\nabla^mf)_\delta$ 
and \\$|(f)_\delta*\nabla u|(x)\leq C_f\cdot\mathcal{M}(|\nabla u|)(x) $
for any $f\in C^\infty_0(\mathbb{R}^3)$ so that
\begin{equation*}\begin{split}
|(\nabla^{d-1}[{(h^{{\alpha}})_M}]*\nabla u)(t,x)|&
=M^{-(d-1)}\cdot|((\nabla^{d-1}{h^{{\alpha}})_M}*\nabla u)(t,x)|\\
&\leq C\cdot M^{-(d-1)}\cdot\mathcal{M}(|\nabla u|)(t,x)
\end{split}\end{equation*} for any $0<M<\infty$ so that
\begin{equation*}\begin{split}
(I)&=\|(\nabla^{d-1}[{(h^{{\alpha}})_M}]*\nabla u)\|_{L^{1}( -4,0;L^1(B(2)))}\\ 
&\leq C\cdot M^{-(d-1)}\cdot\|\mathcal{M}(|\nabla u|)\|_{L^{1}( -4,0;L^1(B(2)))}\\
&\leq C\cdot M^{-(d-1)}\cdot\|\mathcal{M}(|\nabla u|)\|_{L^{2}( -4,0;L^2(B(2)))}
\leq C\cdot M^{1-d}
\end{split}\end{equation*} for any $0<M<\infty$.\\

For $(II)$, we use our global information about pressure in \eqref{local_study_condition3}
 thanks to the property of the Hardy space \eqref{hardy_property}:
\begin{equation}\begin{split}\label{pressure_hardy_used}
(II)&=\|\nabla^{d-1}[{(h^{{\alpha}})_M}]*\nabla^2 P\|_{L^{1}( -4,0;L^1(B(2)))}\\
&=M^{-(d-1)}\cdot\|(\nabla^{d-1}{h^{{\alpha}})_M}*\nabla^2 P\|_{L^{1}( -4,0;L^1(B(2)))}\\
&\leq M^{-(d-1)}\cdot
\| \sup_{\delta>0}(|(\nabla^{d-1}{h^{{\alpha}})_\delta}*\nabla^2 P|)\|_{L^{1}(-4,0;L^{1}(B(2)))}\\&
\leq C \cdot M^{1-d}
\end{split}\end{equation} for any $0<M<\infty$.\\

For $(III)$, we use  following useful facts \textbf{(1, }$\cdots$\textbf{, 5)}:\\

\noindent  \textbf{1.} From $supp((h^{{\alpha}})_M)\subset B(2M)$,
\begin{equation*}\begin{split}
\quad&\|\nabla^d [{(h^{{\alpha}})_M}]*\Big((w\cdot\nabla)u\Big)(t,\cdot)\|
_{L^1(B(2))} \\
&\leq\int_{B(2)}
\int_{\mathbb{R}^3}\Big|\Big((w\cdot\nabla)u\Big)(t,y)\cdot (\nabla^d [{(h^{{\alpha}})_M}])(x-y)\Big|dy   
dx\\
&\leq\int_{B(2M+2)}\Big|\Big((w\cdot\nabla)u\Big)(t,y)\Big|\cdot\Big[
\int_{{B(2)}}| (\nabla^d [{(h^{{\alpha}})_M}])(x-y)|dx  \Big]
dy\\
&\leq C\|\nabla^d [{(h^{{\alpha}})_M}]\|_{L^{\infty}(\mathbb{R}^3)}\cdot
\|\Big((w\cdot\nabla)u\Big)(t,\cdot)\|_{L^1{(B(2M+2))}}
\\
&\leq C \cdot\frac{1}{M^{3+d}}\cdot
\|\Big((w\cdot\nabla)u\Big)(t,\cdot)\|_{L^1{(B(2M+2))}}
\\
&\leq C \cdot\frac{1}{M^{3+d}}\cdot
\|w(t,\cdot)\|_{L^{q^\prime}{(B(2M+2))}}
\cdot\|\nabla u(t,\cdot)\|_{L^q{(B(2M+2))}}
\end{split}\end{equation*} where $q=12/(\alpha +6)$ and $1/q+1/q^\prime = 1$.\\
Note: Because $0<\alpha<2$, we know $12/8<q<2$.

\begin{equation*}\begin{split}
 \textbf{2.}\quad& \|w(t,\cdot)\|_{L^{q}{(B(2M+2))}}\\
&\quad\leq
 CM^{1+\frac{3}{q}}\cdot\Big(
\|\mathcal{M}(|\nabla w|^q)(t,\cdot)\|^{1/q}_{L^{1}(B(1))}
+\|\nabla w(t,\cdot)\|_{L^1(B(2))}\Big)\\
&\quad\leq
 CM^{1+\frac{3}{q}}\cdot\Big(
\|\mathcal{M}(|\mathcal{M}(|\nabla u|)|^q)(t,\cdot)\|^{1/q}_{L^{1}(B(1))}
+\|\mathcal{M}(|\nabla u|)(t,\cdot)\|_{L^1(B(2))}\Big)\\
\end{split}\end{equation*} for any $M\geq 2^k$.\\

\noindent For the first inequality, we used the lemma \ref{lem_Maximal 2.5 or 4}
 and for the second one, we used the fact
  $|\nabla w (t,x)|=|(\nabla u * \phi_r)(t,x)|\leq
   C|\mathcal{M}(|\nabla u|)(t,x)|$ where
  $C$ is independent of $0\leq r<\infty$. (For $r>0$, it follows
 definitions of the convolution and the Maximal function while
for $r=0$, it follows the Lebesgue differentiation theorem with
continuity of $\nabla u$.)
 So, for any $M\geq 2^k$, from  \eqref{local_study_condition3},
\begin{equation*}\begin{split}
&\|w\|_{L^2(-4,0;L^q(B(2M+2)))}\\
&\quad\leq
 CM^{1+\frac{3}{q}}\Big(
\|\|\mathcal{M}(|\mathcal{M}(|\nabla u|)|^q)\|^{1/q}_{L_x^{1}(B(1))}\|
_{L_t^2(-4,0)}\\
&\quad\quad\quad\quad\quad\quad+\|\|\mathcal{M}(|\nabla u|)\|_{L_x^1(B(2))}\|
_{L_t^2(-4,0)}\Big)\\
&\quad\leq
 CM^{1+\frac{3}{q}}\Big(
\|\mathcal{M}(|\mathcal{M}(|\nabla u|)|^q)\|^{1/q}_{L^{2/q}(-4,0;L^1(B(2)))}\\
&\quad\quad\quad\quad\quad\quad+\|\|\mathcal{M}(|\nabla u|)\|_{L^2(-4,0;L^1(B(2)))}\Big)\\
&\quad\leq
 CM^{1+\frac{3}{q}}.
\end{split}\end{equation*}

Before stating the third fact, we needs the following two observations:\\

From standard Sobolev-Poincare inequality on balls (e.g. see 
Saloff-Coste \cite{sobolev}), we have $C$ such that
\begin{equation}\begin{split}\label{Sobolev-Poincare inequality}
\|f-\Bar{f}\|_{L^{3q/(3-q)}(B(M))}\leq C\cdot\|\nabla f\|_{L^q(B(M))}
\end{split}\end{equation}  for any $0 <M <\infty$ and for any $f$ 
whose derivatives are in $L^q_{loc}(\mathbb{R}^3)$ where $\Bar{f}
=\int_{B}fdx/|B|$ is 
the mean value on $B$. Note that $C$ is independent of $M$.\\

  On the other hand, once we fix $M_0>0$, then there exist $C=C(M_0)$ with
the following property: \\

 For any $p$ with $1\leq p<\infty$,  for any $M\geq M_0$
 and for any $f\in L^p_{loc}(\mathbb{R}^3)$, we have
 \begin{equation}\label{lem_Maximal q}
 \|f\|_{L^p(B(M))}\leq CM^{\frac{3}{p}}\cdot
\|\mathcal{M}(|f|^p)\|^{1/p}_{L^{1}(B(2))}
\end{equation}
To prove \eqref{lem_Maximal q}, it is enough to show that
 \begin{equation*}
 \|g\|_{L^1(B(M))}\leq CM^{3}\cdot
\|\mathcal{M}(g)\|_{L^{1}(B(2))}
\end{equation*}
For any $z\in B(2)$,
 \begin{equation*}\begin{split}
 \int_{B(M)} |g(x)|dx&=\frac{(M+2)^3}{(M+2)^3}\cdot\int_{B(M+2)} |g(z+x)|dx\\
 &\leq (M+2)^3\mathcal{M}(g)(z)
 \leq C_{M_0}M^{3}\mathcal{M}(g)(z)
\end{split}\end{equation*} Then we take integral on $z\in B(2)$.\\

Now we states the third fact.
\begin{equation*}\begin{split}
\textbf{3.} \quad& \|w(t,\cdot)\|_{L^{3q/(3-q)}{(B(2M+2))}}\\
&\quad\quad\quad\leq  C\cdot\|\nabla w(t,\cdot)\|_{L^q(B(2M+2))}+
\|\Bar{w}(t,\cdot)\|_{L^{3q/(3-q)}(B(2M+2))}\\
&\quad\quad\quad\leq  
C\cdot M^{3/q} \cdot \|\mathcal{M}(|\nabla w|^q)(t,\cdot)\|
_{L^1(B(2))}^{1/q}
\\&\quad\quad\quad\quad+
CM^{-3}\|w(t,\cdot)\|_{L^1(B(2M+2))}\cdot CM^{3\cdot\frac{3-q}{3q}}\\
&\quad\quad\quad\leq  
C\cdot M^{3/q} \cdot \|\mathcal{M}(|\mathcal{M}(|\nabla u|)|^q)(t,\cdot)\|
_{L^1(B(2))}^{1/q}
\\&\quad\quad\quad\quad+
CM^{\frac{3}{q}-4}\|w(t,\cdot)\|_{L^1(B(2M+2))}\\
&\quad\quad\quad\leq  
C\cdot M^{3/q} \cdot \|\mathcal{M}(|\mathcal{M}(|\nabla u|)|^q)(t,\cdot)\|
_{L^1(B(2))}^{1/q}
\\&\quad\quad\quad\quad+
CM^{\frac{3}{q}-4} CM^{1+\frac{3}{1}}\cdot\Big(
\|\mathcal{M}(|\nabla w|^1)(t,\cdot)\|^{1/1}_{L^{1}(B(1))}
+\|\nabla w(t,\cdot)\|_{L^1(B(2))}\Big)\\
&\quad\quad\quad\leq  
C\cdot M^{3/q} \cdot \|\mathcal{M}(|\mathcal{M}(|\nabla u|)|^q)(t,\cdot)\|
_{L^1(B(2))}^{1/q}
\\&\quad\quad\quad\quad+
CM^{\frac{3}{q}}\Big(
\|\mathcal{M}(|\mathcal{M}(|\nabla u|)|)(t,\cdot)\|_{L^{1}(B(1))}
+\|\mathcal{M}(|\nabla u|)(t,\cdot)\|_{L^1(B(2))}\Big)\\
\end{split}\end{equation*} 
we used \eqref{Sobolev-Poincare inequality} for the first inequality,
\eqref{lem_Maximal q} and definition of mean value 
for the second one  and
 $|\nabla w (t,x)|\leq
   C|\mathcal{M}(|\nabla u|)(t,x)|$ and the lemma \ref{lem_Maximal 2.5 or 4}
  for fourth and fifth ones respectively. So, by taking $L^2$-norm on time $[-4,0]$ with \eqref{local_study_condition3}, 
\begin{equation*}\begin{split}
&\|w\|_{L^2(-4,0;L^{\frac{3q}{3-q}}(B(2M+2)))}
\leq
 CM^{\frac{3}{q}}
\end{split}\end{equation*}  for any $M\geq 2^k$.

\begin{equation*}\begin{split}
\textbf{4.}\quad& \|w(t,\cdot)\|_{L^{q\prime}{(B(2M+2))}}\leq
\|w(t,\cdot)\|^{\theta}_{L^{q}{(B(2M+2))}}\cdot
\|w(t,\cdot)\|^{1-\theta}_{L^{3q/(3-q)}{(B(2M+2))}}\quad\quad
\end{split}\end{equation*}  where $q^\prime=q/(q-1)$ and $\theta=(4q-6)/q$.\\
Note: Because $12/8<q<2$, we have  $0<\theta<1$.
So, for any $M\geq 2^k$,
\begin{equation*}\begin{split}
&\|w\|_{L^2(-4,0;L^{q\prime}(B(2M+2)))}\\&\quad\leq
\|w\|^{\theta}_{L^2(-4,0;L^{q}{(B(2M+2)))}}\cdot
\|w\|^{1-\theta}_{L^2(-4,0;L^{3q/(3-q)}{(B(2M+2)))}}\\
&\leq C\cdot (M^{1+(3/q)})^{\theta}(M^{3/q})^{1-\theta}
= C\cdot M^{4-\frac{3}{q}   }.
\end{split}\end{equation*} 

\noindent \textbf{5.} From  \eqref{lem_Maximal q}, for any $M\geq 2^k$,
\begin{equation*}\begin{split}
\quad& \|\nabla u(t,\cdot)\|_{L^q{(B(2M+2))}}
\leq C\cdot M^{3/q} \cdot \|\mathcal{M}(|\nabla u|^q)(t,\cdot)\|_{L^1(B(2))}^{1/q}.
\end{split}\end{equation*} So, for any $M\geq 2^k$, from  \eqref{local_study_condition3},
\begin{equation*}\begin{split}
\|\nabla u\|_{L^2(-4,0;L^q(B(2M+2)))}
&\leq C\cdot M^{3/q} \cdot \|\|\mathcal{M}(|\nabla u|^q)\|
_{L_x^1(B(2))}^{1/q}\|_{L_t^2(-4,0)}\\
&\leq C\cdot M^{3/q} \cdot \|\mathcal{M}(|\nabla u|^q)\|
_{L^{2/q}(-4,0;L^1(B(2)))}^{1/q}
\leq C\cdot M^{3/q}.
\end{split}\end{equation*}

\noindent Using above five results $\textbf{(1, }\cdots\textbf{, 5)}$ all together, we have for any $M\geq 2^k$,
\begin{equation*}\begin{split}
(III)
&\leq C \cdot\frac{1}{M^{3+d}}\cdot
\|w\|_{L^{2}( -4,0;L^{q^{\prime}}(B(2M+2)))}
\|\nabla u\|_{L^{2}( -4,0;L^q(B(2M+2)))}\\
&\leq C\cdot\frac{1}{M^{3+d}}\cdot M^{4-(3/q)}\cdot M^{3/q}
= C\cdot M^{1-d}
\end{split}\end{equation*}
which proved the above third claim \eqref{step4_third_claim}.\\

Finally we combine three claims
 \eqref{step4_first_claim}, \eqref{step4_second_claim}
 and \eqref{step4_third_claim}:
\begin{equation*}\begin{split}
&\|(-\Delta)^{\frac{\alpha}{2}} 
\nabla^d u\|_{L^\infty(-{(1/6)}^2,0;
L^{1}(B((1/6))))}\\
&\quad\leq \|
C\Big(1+\Big|\int_{|y|\geq {(1/6)}}
\frac{\nabla^{d} u(\cdot_t,\cdot_x-y)}{|y|^{3+\alpha}}dy\Big|\Big)\|
_{L^\infty(-{(1/6)}^2,0;
L^{1}(B((1/6))))}\\
&\quad\leq C+C\sum_{j=k}^{\infty}(\frac{1}{2^{{\alpha}}})^j\cdot \|
|({(h^{{\alpha}})_{2^j}}*\nabla^d u)(\cdot_t,\cdot_x)|\|
_{L^\infty(-{(1/6)}^2,0;
L^{1}(B((1/6))))}\\
&\quad\leq C+C\sum_{j=k}^{\infty}(\frac{1}{2^{{\alpha}}})^j
\cdot (2^j)^{1-d}
\leq C+C\sum_{j=k}^{\infty}(\frac{1}{2^{d+\alpha-1}})^j
\leq C
\end{split}\end{equation*} because $d+\alpha-1>0$ 
from $d\geq 1$ and $\alpha>0$.\\ 

 \noindent By the exact same way, we can also prove that
\begin{equation*}
\|(-\Delta)^{\frac{\alpha}{2}} 
\nabla^m u\|_{L^\infty(-{(1/6)}^2,0;
L^{1}(B((1/6))))}\leq C 
\end{equation*} for $m=d+1, ... ,d+4$. By repeated uses of Sobolev's inequality,
\begin{equation*}
\|(-\Delta)^{\frac{\alpha}{2}} 
\nabla^d u\|_{L^\infty(-{(1/6)}^2,0;
L^{\infty}(B((1/6))))}\leq C({d,\alpha}) 
\end{equation*} and it finishes this proof of the proposition \ref{local_study_thm}.
\end{proof}

\section{Proof of the main theorem \ref{main_thm}}\label{proof_main_thm_II}

We begin this section by presenting one small lemma about pivot quantities.
After that,
the subsection \ref{prof_main_thm_II_alpha_0} covers  the part (II) for $\alpha=0$
while the subsection \ref{prof_main_thm_II_alpha_not_0} does the part (II) for $0<\alpha<2$.
Finally the part (I) for $0\leq\alpha<2$ follows in the subsection \ref{proof_main_thm_I}.

\subsection{$L^1$ Pivot quantities}
The following lemma says that $L^1$ space-time norm of our pivot quantities can be controlled by $L^2$ space norm of the initial data. These things have
the best scaling like $|\nabla u|^2$ and $|\nabla^2 P|$  among all other $a$ $priori$ quantities from $L^2$ initial data (also see \eqref{best_scaling}).

\begin{lem}\label{lemma7_problem I-n}
 There exist constant $C>0$ and $C_{d,\alpha}$ 
for integer $d\geq1$ and real $\alpha\in(0,2)$ with the following property:\\

If $(u,P)$ is a solution of (Problem I-n) for some $1\leq n \leq \infty$, then we have
\begin{equation*} 
 \int_0^{\infty}\int_{\mathbb{R}^3}\big(|\nabla u(t,x)|^2 +
 |\nabla^2P(t,x)| + |\mathcal{M}(|\nabla u|)(t,x)|^2\big)dxdt 
\leq C\|u_0\|^2_{L^2(\mathbb{R}^3)}
\end{equation*} and 
\begin{equation*}\begin{split} 
 \int_0^{\infty}\int_{\mathbb{R}^3}&\Big(
|\mathcal{M}(\mathcal{M}(|\nabla u|))|^2+|\mathcal{M}(|\nabla u|^q)|^{2/q}+
|\mathcal{M}(|\mathcal{M}(|\nabla u|)|^q)|^{2/q}\\
&+
\sum_{m=d}^{d+4} \sup_{\delta>0}(|(\nabla^{m-1}{h^{\alpha})_\delta}*\nabla^2 P|)
\Big)dxdt 
\leq C_{d,\alpha}\|u_0\|^2_{L^2(\mathbb{R}^3)}
\end{split}\end{equation*} for any integer $d\geq1$ and any real $\alpha\in(0,2)$  
where $q=q(\alpha)$ is defined by $12/(\alpha+6)$.
\begin{rem}
The definitions of $h^{\alpha}$ and
 $(\nabla^{m-1}{h^{\alpha})_\delta}$ can be found around \eqref{property_h}.
\end{rem}
\begin{rem}
In the following proof, we will see that every quantity in the left hand sides of the above two estimates can be controlled by
dissipation of energy $\|\nabla u\|_{L^2((0,\infty)\times\mathbb{R}^3)}^{2}$
only.
It explains the latter part of the remark \ref{rmk_dissipation of energy}. 
\end{rem}

\end{lem}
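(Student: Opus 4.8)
The plan is to reduce every quantity on the left-hand sides to the single scalar $\|\nabla u\|_{L^{2}((0,\infty)\times\mathbb{R}^{3})}^{2}$, which is controlled by $\|u_{0}\|_{L^{2}(\mathbb{R}^{3})}^{2}$ via the energy identity. Concretely, for finite $n$ the energy equality \eqref{energy_eq_Problem I-n} together with $\|u_{0}*\phi_{1/n}\|_{L^{2}}\le\|u_{0}\|_{L^{2}}$ (Young's inequality, since $\|\phi_{1/n}\|_{L^{1}}=1$) gives $\|\nabla u\|_{L^{2}((0,\infty)\times\mathbb{R}^{3})}^{2}\le\tfrac12\|u_{0}\|_{L^{2}}^{2}$, and the same bound follows for $n=\infty$ from the energy inequality of a smooth Navier--Stokes solution. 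Setting $D:=\|\nabla u\|_{L^{2}((0,\infty)\times\mathbb{R}^{3})}^{2}\le C\|u_{0}\|_{L^{2}}^{2}$, all the remaining estimates will be established pointwise in $t$ and then integrated over $t\in(0,\infty)$.

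For the Maximal-function terms I would use only that $\mathcal{M}$ is bounded on $L^{p}(\mathbb{R}^{3})$ for every $p>1$. For a.e.\ $t$ we have $\nabla u(t)\in L^{2}$, so $\|\mathcal{M}(|\nabla u(t)|)\|_{L^{2}}^{2}\le C\|\nabla u(t)\|_{L^{2}}^{2}$ and, iterating once, $\|\mathcal{M}(\mathcal{M}(|\nabla u(t)|))\|_{L^{2}}^{2}\le C\|\nabla u(t)\|_{L^{2}}^{2}$. Since $\alpha\in(0,2)$, the exponent $q=12/(\alpha+6)$ lies in $(1,2)$, hence $2/q>1$ and $\mathcal{M}$ is bounded on $L^{2/q}$; applying it to $|\nabla u(t)|^{q}$ and to $|\mathcal{M}(|\nabla u(t)|)|^{q}$ gives $\big\||\mathcal{M}(|\nabla u(t)|^{q})|^{2/q}\big\|_{L^{1}}=\|\mathcal{M}(|\nabla u(t)|^{q})\|_{L^{2/q}}^{2/q}\le C\|\nabla u(t)\|_{L^{2}}^{2}$, and likewise $\big\||\mathcal{M}(|\mathcal{M}(|\nabla u(t)|)|^{q})|^{2/q}\big\|_{L^{1}}\le C\|\nabla u(t)\|_{L^{2}}^{2}$. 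Integrating in $t$ bounds all of these by $CD\le C\|u_{0}\|_{L^{2}}^{2}$.

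The pressure terms are the only place where the structure of the equation enters. For a solution of (Problem I-n), $-\Delta P=\ebdiv\,\ebdiv\big((u*\phi_{1/n})\otimes u\big)$; using $\ebdiv(u*\phi_{1/n})=\ebdiv u=0$ and moving the two derivatives onto the factors, one rewrites this as $-\Delta P=\sum_{i,j}\big(\partial_{j}(u*\phi_{1/n})_{i}\big)\big(\partial_{i}u_{j}\big)=:G$ (the two ``diagonal'' terms vanish by incompressibility). For each fixed $j$ the field $\nabla u_{j}$ is curl-free while $\partial_{j}(u*\phi_{1/n})$ is divergence-free (its divergence equals $\partial_{j}\ebdiv(u*\phi_{1/n})=0$), and both are in $L^{2}$; by the compensated-compactness estimate of Coifman, Lions, Meyer and Semmes \cite{clms} each summand lies in $\mathcal{H}(\mathbb{R}^{3})$ with $\mathcal{H}$-norm $\le C\|\nabla u_{j}(t)\|_{L^{2}}\|\partial_{j}(u*\phi_{1/n})(t)\|_{L^{2}}\le C\|\nabla u(t)\|_{L^{2}}^{2}$, where we used $\|\partial_{j}(u*\phi_{1/n})(t)\|_{L^{2}}\le\|\nabla u(t)\|_{L^{2}}$ (Young). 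Hence $\|G(t)\|_{\mathcal{H}}\le C\|\nabla u(t)\|_{L^{2}}^{2}$. Since $\nabla^{2}P$ is a finite linear combination of second-order Riesz transforms of $G$ and the Riesz transform is bounded on $\mathcal{H}$, we get $\|\nabla^{2}P(t)\|_{L^{1}}\le\|\nabla^{2}P(t)\|_{\mathcal{H}}\le C\|\nabla u(t)\|_{L^{2}}^{2}$ (equivalently, invoke \eqref{pressure_hardy}); integration in $t$ gives the $\nabla^{2}P$ bound. Finally, for each $m$ with $d\le m\le d+4$ the function $\nabla^{m-1}h^{\alpha}=\nabla^{m-1}\big(\zeta/|\cdot|^{3+\alpha}\big)$ is smooth with compact support (as $\zeta$ vanishes near the origin), so the Hardy-space property \eqref{hardy_property}, applied with $\mathcal{G}=\nabla^{m-1}h^{\alpha}$ and with the normalization of $(\nabla^{m-1}h^{\alpha})_{\delta}$ fixed around \eqref{property_h}, yields $\big\|\sup_{\delta>0}|(\nabla^{m-1}h^{\alpha})_{\delta}*\nabla^{2}P(t)|\big\|_{L^{1}}\le C_{d,\alpha}\|\nabla^{2}P(t)\|_{\mathcal{H}}\le C_{d,\alpha}\|\nabla u(t)\|_{L^{2}}^{2}$; summing over the finitely many $m$ and integrating in $t$ completes the second estimate. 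The only genuinely delicate point is the algebraic reduction of $-\Delta P$ to the gradient-bilinear form $G$: this is what upgrades the source from merely $L^{1}$ to the Hardy space and keeps all constants uniform in $n$, since the mollification only improves the $L^{2}$ norms involved; the rest is bookkeeping with standard boundedness properties.
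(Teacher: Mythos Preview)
Your proof is correct and follows essentially the same route as the paper: reduce everything to $\|\nabla u\|_{L^2_{t,x}}^2$ via the energy identity, use $L^p$-boundedness of $\mathcal{M}$ (with $2/q>1$) for all the Maximal-function terms, and use CLMS compensated compactness plus boundedness of the Riesz transform on $\mathcal{H}$ for the pressure terms, followed by the Hardy-space grand-maximal property \eqref{hardy_property} for the $h^\alpha$ terms. Your expansion of $-\Delta P$ into the curl-free/divergence-free bilinear form is exactly the mechanism behind the paper's one-line estimate \eqref{pressure_hardy_Problem I-n}, just written out more explicitly.
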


\begin{proof}
From \eqref{energy_eq_Problem I-n},
\begin{equation*}\begin{split}
\|\nabla u\|^2_{L^2(0,\infty;L^2(\mathbb{R}^3))}
&\leq \|u_0*\phi_{\frac{1}{n}}\|_{L^2(\mathbb{R}^3)}^2\leq \|u_0\|_{L^2(\mathbb{R}^3)}^2.
\end{split}\end{equation*}

\noindent For the pressure term, 
we use boundedness of the Riesz transform on Hardy space $\mathcal{H}$ 
and compensated compactness result in 
Coifman,  Lions,  Meyer and  Semmes  \cite{clms}:
\begin{equation}\begin{split}\label{pressure_hardy_Problem I-n} 
\|\nabla^2 P\|_{L^1(0,\infty;L^1(\mathbb{R}^3))}&\leq
\|\nabla^2 P\|_{L^1(0,\infty;\mathcal{H}(\mathbb{R}^3))}
\leq C\|\Delta  P\|_{L^1(0,\infty;\mathcal{H}(\mathbb{R}^3))}\\
&=\|\ebdiv\ebdiv
\Big( (u * \phi_{1/n})\otimes  u\Big)\|_{L^1(0,\infty;\mathcal{H}(\mathbb{R}^3))}\\
&\leq C\cdot\|\nabla (u * \phi_{1/n})\|_{L^2(0,\infty;L^2(\mathbb{R}^3))}
\|\nabla u\|_{L^2(0,\infty;L^2(\mathbb{R}^3))}\\
&\leq C\cdot\|\nabla u\|_{L^2(0,\infty;L^2(\mathbb{R}^3))}^2
\leq C\|u_0\|_{L^2(\mathbb{R}^3)}^2.
\end{split}\end{equation} 

\noindent  For Maximal functions, 
\begin{equation*}\begin{split}
\| \mathcal{M}(\mathcal{M}(|\nabla u|))\|^2_{L^2(0,\infty;L^2(\mathbb{R}^3))}&\leq
C\cdot\|\mathcal{M}(|\nabla u|)\|^2_{L^2(0,\infty;L^2(\mathbb{R}^3))}\\
&\leq C\cdot\|\nabla u\|^2_{L^2(0,\infty;L^2(\mathbb{R}^3))}\\
&\leq C\cdot\|u_0\|^2_{L^2(\mathbb{R}^3)}.\\
\end{split}\end{equation*} 

\noindent Let $d\geq1$ and $0<\alpha<2$ and take $q=12/(\alpha+6)$. From 
$1<(2/q)<(4/3)$,
\begin{equation*}\begin{split}
\|\mathcal{M}(|\nabla u|^q)\|^{2/q}_{L^{2/q}(0,\infty;L^{2/q}(\mathbb{R}^3))}
&\leq C\cdot\||\nabla u|^q\|^{2/q}_{L^{2/q}(0,\infty;L^{2/q}(\mathbb{R}^3))}\\
&= C\cdot\|\nabla u\|^2_{L^{2}(0,\infty;L^2(\mathbb{R}^3))}\\
&\leq C\cdot\|u_0\|^{2}_{L^2(\mathbb{R}^3)}
\end{split}\end{equation*} and 
\begin{equation*}\begin{split}
\| \mathcal{M}(|\mathcal{M}(|\nabla u|)|^q)
\|^{2/q}_{L^{2/q}(0,\infty;L^{2/q}(\mathbb{R}^3))}
&\leq C\cdot\||\mathcal{M}(|\nabla u|)|^q\|^{2/q}_{L^{2/q}(0,\infty;L^{2/q}(\mathbb{R}^3))}\\
&\leq C\cdot\|\mathcal{M}(|\nabla u|)\|^{2}_{L^{2}(0,\infty;L^{2}(\mathbb{R}^3))}\\
&\leq C\cdot\|\nabla u\|^2_{L^{2}(0,\infty;L^2(\mathbb{R}^3))}\\
&\leq C\cdot\|u_0\|^{2}_{L^2(\mathbb{R}^3)}
\end{split}\end{equation*}
where   $C$ depends 
only 
on $\alpha$.\\

\noindent Thanks to the property of Hardy space \eqref{hardy_property}
with \eqref{pressure_hardy_Problem I-n}, we have
\begin{equation*}\begin{split}
\sum_{m=d}^{d+4}\| \sup_{\delta>0}(|(\nabla^{m-1}{h^{\alpha})_\delta}
*\nabla^2 P|)\|_{L^{1}(0,\infty;L^{1}(\mathbb{R}^3))}
&\leq\sum_{m=d}^{d+4}C\|\nabla^2 P
\|_{L^1(0,\infty;\mathcal{H}(\mathbb{R}^3))}\\
&\leq C\|u_0\|_{L^2(\mathbb{R}^3)}^2
\end{split}\end{equation*} where  the above $C$ depends only on $d$ and $\alpha$.
\end{proof}

We are ready to prove the main theorem \ref{main_thm}.

\begin{rem}\label{n_infty_rem}
 In the following subsections \ref{prof_main_thm_II_alpha_0}
and \ref{prof_main_thm_II_alpha_not_0}, we consider  solutions of 
{(Problem I-n)}  for positive integers $n$. 
However it will be clear that
every computation in these subsections can also be verified for the case $n=\infty$ 
once we assume that the smooth solution $u$ of the Navier-Stokes exists.
This $n=\infty$ case (the original Navier-Stokes)
will be covered in the subsection \ref{proof_main_thm_I}.  \\
\end{rem}

We focus on the $\alpha=0$ case of the part (II) first.\\
\subsection{Proof of theorem \ref{main_thm} part (II)
 for $\alpha=0$ case}\label{prof_main_thm_II_alpha_0}

\begin{proof}[Proof of proposition \ref{main_thm} part (II) for the $\alpha=0$ case]

 \ \\

Let any $u_0$ of \eqref{initial_condition} be given. 
From the Leray's construction, 
there exists the   $C^{\infty}$ solution sequence $\{u_n\}_{n=1}^{\infty}$ of 
{(Problem I-n)}  on $(0,\infty)$ with 
corresponding pressures $\{P_n\}_{n=1}^{\infty}$. 
From now on, our goal is to make an estimate for $\nabla^d u_n$ which is uniform in $n$.\\

\noindent For each  $n$, $\epsilon>0$, $t>0$ and $x\in\mathbb{R}^3$, 
define a new flow $X_{n,\epsilon}(\cdot,t,x)$ by solving
\begin{equation*}\begin{split}
&\frac{\partial X_{n,\epsilon} }{\partial s}
 (s,t,x) = u_{n}*\phi_{\frac{1}{n}}*\phi_{\epsilon}(s,X_{n,\epsilon}(s,t,x))
\quad \mbox{ for } s\in[0,t],\\
&X_{n,\epsilon}(t,t,x) =x.
\end{split} 
\end{equation*}
For convenience, we define $F_n(t,x)$ and $g_n(t)$.
\begin{equation*}\begin{split}   
  F_n(t,x)& = \big(|\nabla u_n|^2 + |\nabla^2P_n|
 + |\mathcal{M}(\nabla u_n)|^2\big)(t,x),\quad              
  g_n(t) = \int_{\mathbb{R}^3}F_n(t,x)dx.
  \end{split}\end{equation*}
We 
 define for $n$, $t>0$ and  $0<4\epsilon^2\leq t$
\begin{equation*}
 \Omega_{n,\epsilon,t} = \{ x\in \mathbb{R}^3\quad |\quad \frac{1}{\epsilon}
\int_{t-4{\epsilon}^2}^{t}\int_{B({2\epsilon})}F_n(s,X_{n,\epsilon} 
(s,t,x)+y)dyds\leq{\bar{\eta}}\}
\end{equation*} where $\bar{\eta}$ comes from the proposition
 \ref{local_study_thm}.
We measure size of $(\Omega_{n,\epsilon,t})^C$:
\begin{equation}\begin{split}\label{omega_estimate}
 |(\Omega_{n,\epsilon,t})^C|
&= |\{ x\in \mathbb{R}^3\quad |\quad \frac{1}{\epsilon}
\int_{t-4{\epsilon}^2}^{t}\int_{B({2\epsilon})}F_n(s,X_{n,\epsilon}(s,t,x)+y)dyds > {\bar{\eta}}\}|\\
&\leq\frac{1}{{\bar{\eta}}}\int_{\mathbb{R}^3}\Big(\frac{1}{\epsilon}
\int_{t-4{\epsilon}^2}^{t}\int_{B({2\epsilon})}F_n(s,X_{n,\epsilon}(s,t,x)+y)dyds\Big)dx\\
&=\frac{1}{{\bar{\eta}}\epsilon}\Big(\int_{B({2\epsilon})}
\int_{-4{\epsilon}^2}^{0}\int_{\mathbb{R}^3}F_n(t+s,X_{n,\epsilon}(t+s,t,x)+y)dxdsdy\Big)\\
&=\frac{1}{{\bar{\eta}}\epsilon}\Big(\int_{B({2\epsilon})}
\int_{-4{\epsilon}^2}^{0}\int_{\mathbb{R}^3}F_n(t+s,z+y)dzdsdy\Big)\\
&\leq\frac{1}{{\bar{\eta}}\epsilon}\Big(\int_{B({2\epsilon})}1 dy\Big)\Big(
\int_{-4{\epsilon}^2}^{0}\int_{\mathbb{R}^3}F_n(t+s,\bar{z})d\bar{z}ds\Big)\\
&\leq\frac{C\epsilon^2}{{\bar{\eta}}}\Big(
\int_{-4{\epsilon}^2}^{0}\int_{\mathbb{R}^3}F_n(t+s,\bar{z})d\bar{z}ds\Big)\\
  &\leq C\frac{\epsilon^4}{\bar{\eta}}\Big(\frac{1}{4\epsilon^2}
\int_{-4\epsilon^2}^0g_n(t+s)ds\Big) 
\leq  \epsilon^4\mathcal{M}^{(t)}
(\frac{C}{\bar{\eta}}g_n\cdot \mathbf{1}_{(0,\infty)})(t)
=\epsilon^4 \tilde{g}_n(t)\
\end{split}\end{equation} where  $\tilde{g}_n= \mathcal{M}^{(t)}
(\frac{C}{\bar{\eta}}g_n\cdot \mathbf{1}_{(0,\infty)})$ and $\mathcal{M}^{(t)}$ is 
the Maximal function in $\mathbb{R}^1$. For the third inequality, we used the fact
that $X_{n,\epsilon}(\cdot,t,x)$ is incompressible.
From the fact that the Maximal operator is bounded from $L^1$ to $L^{1,\infty}$ together with
the lemma \ref{lemma7_problem I-n},
$\|\tilde{g}_n(\cdot)\|_{L^{1,\infty}(0,\infty)}\leq 
\frac{C}{\bar{\eta}}\|g_n(\cdot)\|_{L^{1}(0,\infty)}\leq
\frac{C}{\bar{\eta}}\|u_0\|^2_{L^2(\mathbb{R}^3)}$.\\

Now we fix $n, t, \epsilon$ and $ x$ with $n\geq1$, $0<t<\infty$ , $0<4\epsilon^2\leq t$ and $x\in\Omega_{n,\epsilon,t}$. 
We define ${v}, {Q}$ on $(-4,\infty)\times\mathbb{R}^3$ by
using the Galilean invariance:
\begin{equation}\begin{split}\label{special designed scaling}
 {v}(s,y) =& \epsilon u_n(t+\epsilon^2 s,X_{n,\epsilon}(t+\epsilon^2 s,t,x)+\epsilon y)\\
 &- \epsilon (u_n*\phi_{\epsilon})(t+\epsilon^2s,X_{n,\epsilon}(t+\epsilon^2s,t,x))\\
{Q}(s,y) =& \epsilon^2 P_n(t+\epsilon^2 s,X_{n,\epsilon}(t+\epsilon^2 s,t,x)+\epsilon y)\\
 &+ \epsilon y \partial_s[ (u_n*\phi_{\epsilon})(t+\epsilon^2s,
X_{n,\epsilon}(t+\epsilon^2s,t,x))].
\end{split}\end{equation} 
\begin{rem}
This specially designed
$\epsilon$-scaling will give the mean zero property to both the velocity
 and the advection
velocity of the resulting equation \eqref{special designed scaling result}.
\end{rem}

\noindent Let us denote $\Box$ and $\Diamond$ by $\Box= \big(t+\epsilon^2 s,X_{n,\epsilon}(t+\epsilon^2 s,t,x)+\epsilon y\big)$ and 
$\Diamond=\big(t+\epsilon^2 s,X_{n,\epsilon}(t+\epsilon^2 s,t,x)\big)$, 
respectively.
Then the chain rule gives us
\begin{equation*}\begin{split}
&   \partial_s{v}(s,y) =
 \epsilon^3\partial_t(u_n)(\Box)   
+  \epsilon^3   \big((u_{n}*\phi_{\frac{1}{n}}*\phi_{\epsilon})(\Diamond)\cdot\nabla\big)u_n(\Box)
 - \epsilon  \partial_s[(u_{n}*\phi_{\epsilon})(\Diamond)],\\
&\big({v} *_y \phi_{\frac{1}{n\epsilon}}\big)(s,y)
=\epsilon (u_n*\phi_{\frac{1}{n}})(\Box)-\epsilon(u_{n}*\phi_{\epsilon})(\Diamond),\\
&\int_{\mathbb{R}^3}\big({v} *_y \phi_{\frac{1}{n\epsilon}}\big)(s,z)\phi(z)dz
=\epsilon (u_n*\phi_{\frac{1}{n}}*\phi_{\epsilon})(\Diamond)-\epsilon(u_{n}*\phi_{\epsilon})(\Diamond),\\
&\bigg(\Big(\big({v} *_y \phi_{\frac{1}{n\epsilon}}\big)(s,y) - 
\int_{\mathbb{R}^3}\big({v} *_y \phi_{\frac{1}{n\epsilon}}\big)(s,z)\phi(z)dz\Big)\cdot\nabla\bigg)
{v}(s,y)=\\
&\epsilon^3 \Big((u_n*\phi_{\frac{1}{n}})(\Box)\cdot\nabla\Big)u_n(\Box)
-\epsilon^3 \Big((u_n*\phi_{\frac{1}{n}}*\phi_{\epsilon})(\Diamond)\cdot\nabla\Big)u_n(\Box),\\
&-\Delta_y {v}(s,y) = -\epsilon^3\Delta_y u_n(\Box) \mbox{ and} \\
&\nabla_y{Q}(s,y)= \epsilon^3 \nabla P_n(\Box) 
+ \epsilon \partial_s[ (u_n*\phi_{\epsilon})(\Diamond))].
\end{split}\end{equation*} 

\noindent Thus, for $(s,y)\in (-4,\infty)\times\mathbb{R}^3$,
\begin{equation}\begin{split}\label{special designed scaling result}
\Big[\partial_s{v}+
\bigg(\Big(\big({v} * \phi_{\frac{1}{n\epsilon}}\big) - 
\int\big({v} * \phi_{\frac{1}{n\epsilon}}\big)\phi \Big)\cdot\nabla\bigg)
{v} +\nabla{Q}-\Delta {v}\Big](s,y)=0.
\end{split}\end{equation} As a result,
$({v}(\cdot_s,\cdot_y),{Q}(\cdot_s,\cdot_y))$ is a solution of (Problem II-$\frac{1}{n\epsilon}$).\\

From definition of the Maximal function, we can verify that
$|\mathcal{M}(\nabla {v})|^2$
behaves like $|\nabla v|^2$ under the scaling 
in the following sense:
\begin{equation}\begin{split}\label{Maximal_scaling}
\mathcal{M}(\nabla {v})(s,y)
&=\sup_{M>0}\frac{C}{M^3}\int_{B(M)}\epsilon^2(\nabla {u_n})
\big(t+\epsilon^2 s,X_{n,\epsilon}(t+\epsilon^2 s,t,x)+\epsilon (y+z)\big)dz\\
&=\sup_{\epsilon M>0}\frac{C}{\epsilon^3M^3}\int_{B(\epsilon M)}\epsilon^2(\nabla {u_n})
\big(t+\epsilon^2 s,X_{n,\epsilon}(t+\epsilon^2 s,t,x)+\epsilon y+\bar{z}\big)d\bar{z}\\
&=\epsilon^2\mathcal{M}(\nabla {u_n})
(\Box)
\end{split}\end{equation} As a result,
\begin{equation*}\begin{split}
&\int_{-4}^{0}\int_{B(2)}\big(|\nabla {v}(s,y)|^2 +
 |\nabla^2{Q}(s,y)| + |\mathcal{M}(\nabla {v})(s,y)|^2\big)dyds \\
 &=\epsilon^4\int_{-4}^{0}\int_{B(2)}\Big[|\nabla {u_n}|^2 +
 |\nabla^2{P_n}| + |\mathcal{M}(\nabla {u_n})|^2\Big](\Box)dyds \\ 
  &=\epsilon^4\int_{-4}^{0}\int_{B(2)}F_n(\Box)dyds \\ 
  &=\epsilon^{-1}\int_{t-4\epsilon^2}^{t}\int_{B(2\epsilon)}F_n
 \big(s,X_{n,\epsilon}(s,t,x)+y\big)dyds 
 \leq {\bar{\eta}}
\end{split}\end{equation*} where the first equality comes from the definition of $
(v,Q)$ and the second one follows the change of variable  
$\big(t+\epsilon^2 s,\epsilon y\big)\rightarrow (s,y)$. 
Moreover, it satisfies
\begin{equation}\label{local_study_condition_satisfied}\begin{split}
&\int_{\mathbb{R}^3}\phi(z){v}(s,z)dz = 0, \quad -4<s<0.
\end{split}\end{equation} 
So $(v,Q)$ satisfies all conditions (\ref{local_study_condition1},
\ref{local_study_condition2})
 in the proposition \ref{local_study_thm} with $r=1/(n\epsilon)\in[0,\infty)$. \\

The conclusion of the proposition \ref{local_study_thm} implies that if $x\in\Omega_{n,\epsilon,t}$ for some $n,t$ and $\epsilon$ such that
 $4\epsilon^2\leq t$  then
$|\nabla^d {v}(0,0)|\leq C_{d}.$
As a result, using  $\nabla^d {v}(0,0)=
\epsilon^{d+1} \nabla^d u_n(t,x)$ for any integer $d\geq 1$,
 we have 
\begin{equation*}
|\{x\in \mathbb{R}^3 | \quad|\nabla^d u_n(t,x)|> 
\frac{C_{d}}{\epsilon^{d+1}}\}|\leq|\Omega_{n,\epsilon,t}^C|
\leq  \epsilon^4\cdot\tilde{g}_n(t).\
\end{equation*}

\noindent Let $K$ be any open bounded subset in $\mathbb{R}^3$.
Also  define $p=4/(d+1)$. Then for any $t>0$, 
\begin{equation*}
 \beta^p\cdot\Big|\{x\in K: |
(\nabla^d u_n)(t,x)|
> \beta\}\Big|\leq 
\begin{cases}& \beta^p\cdot|K|
,\quad\mbox{ if } \beta\leq C\cdot t^{-2/p}\\
 & C\cdot  \tilde{g}_n(t) ,
  \quad\mbox{ if } \beta>C\cdot t^{-2/p}.\\
\end{cases}
\end{equation*} Thus,
 \begin{equation*}
 \|(\nabla^d u_n)(t,\cdot)\|^p_{L^{p,\infty}(K)} 
\leq C\cdot\max\big( \tilde{g}_n(t),\frac{|K|}{t^2} \big)
     \end{equation*}

\noindent We pick any $t_0>0$. If we take ${L^{1,\infty}(t_0,T)}$-norm 
to the above inequality, then we obtain
 \begin{equation}\begin{split}\label{integer_estimate}
\quad\|\nabla^d u_n\|^p_{L^{p,\infty}(t_0,\infty;L^{p,\infty}(K))}
 &\leq C\Big(\| \tilde{g}_n\|_{L^{1,\infty}{(0,\infty)}} + |K|\cdot\|\frac{1}
{{|\cdot|}^2}\|_{L^{1,\infty}(t_0,\infty)}\Big)\\
&\leq C\Big(\|u_0\|^2_{L^{2}(\mathbb{R}^3)} + 
\frac{|K|}{t_0}\Big)
\end{split}\end{equation} where $C$ depends only on $d\geq1$.\\

We observe that
the above 
estimate is uniform in $n$. 
It is well known that both $\nabla u$ and $\nabla^2 u$ are locally integrable functions for any suitable weak solution $u$
which can be obtained by a limiting argument of $u_n$ (e.g. see 
 Lions \cite{lions}). Thus, the
 above estimates \eqref{integer_estimate} holds
even for $u$ with $d=1,2$.\\
\begin{rem}
In fact, for the case $d=1$, the above estimate says $\nabla u\in L^{2,\infty}_{loc}$, which is useless because
we know a better estimate $\nabla u\in L^2$.
\end{rem}
\begin{rem}
For $d \geq 3$, 
 the above estimate \eqref{integer_estimate} does not give us any 
 direct information  about higher derivatives $\nabla^d u$ of a weak 
solution $u$ 
because full regularity of weak solutions is still open, so
$\nabla^d u$ may not be  locally integrable for $d\geq3$.
Instead,
the only thing we can say is that, for $d\geq3$, higher derivatives $\nabla^d u_n$
   of a Leray's approximation $u_n$ have    $L^{4/(d+1),\infty}_{loc}$ bounds which are uniform in $n\geq1$.
\end{rem}
\end{proof}

From now on, we will prove the $0<\alpha<2$ case of the part (II).
\subsection{Proof of theorem \ref{main_thm} part (II)
 for $0<\alpha<2$ case}\label{prof_main_thm_II_alpha_not_0}
\begin{proof}[Proof of proposition \ref{main_thm} part (II) for the $0<\alpha<2$ case]

\ \\

We fix $d\geq 1$ and $0<\alpha<2$. 
Then, 
for any positive integer $n$, any $t>0$ and $x\in\mathbb{R}^3$, we denote $F_n(t,x)$ in this time by:
\begin{equation*}\begin{split}   
  F_n(t,x) = \Big(&|\nabla u_n(t,x)|^2 + |\nabla^2P_n(t,x)| + 
|\mathcal{M}(\nabla u_n)(t,x)|^2\\&+
|\mathcal{M}(\mathcal{M}(|\nabla u_n|))|^2+
(\mathcal{M}(|\mathcal{M}(|\nabla u_n|)|^q))^{2/q}\\+
&|\mathcal{M}(|\nabla u_n|^q)|^{2/q}+
\sum_{m=d}^{d+4} \sup_{\delta>0}(|(\nabla^{m-1}{h^{\alpha})_\delta}*\nabla^2 P|)
\Big).
\end{split}\end{equation*} 
We use the same definitions for $g_n$,  $\tilde{g}_n$, $X_{n,\epsilon}$ and $\Omega_{n,\epsilon,t}$   
of the previous section \ref{prof_main_thm_II_alpha_0} for the case $\alpha=0$.  
Note that they depend on $d$ and 
$\alpha$, and  we have 
$\|\tilde{g}_n\|_{L^{1,\infty}(0,\infty)}\leq 
\frac{C_{d,\alpha}}{\bar{\eta}}\cdot\|u_0\|^2_{L^2(\mathbb{R}^3)}$
from the lemma \ref{lemma7_problem I-n}. \\

Now we pick any $x\in\Omega_{n,\epsilon,t}$ and any $\epsilon$ such that
 $4\epsilon^2\leq t$, and define $v$ and $Q$
 as the previous section \ref{prof_main_thm_II_alpha_0} (see \eqref{special designed scaling}).\\

\noindent In order to  follow the previous subsection  \ref{prof_main_thm_II_alpha_0},
only thing which remains is  to verify if every quantity in 
$F_n(t,x)$
 has the same scaling with $|\nabla v|^2$ after the transform
\eqref{special designed scaling}. 
For Maximal of Maximal functions,
\begin{equation*}\begin{split}
&\mathcal{M}(\mathcal{M}(|\nabla v|))(s,y)\\
&=\sup_{M>0}\frac{C}{M^3}\int_{B(M)}\mathcal{M}(|\nabla v|)(s,y+z)dz\\
&=\sup_{M>0}\frac{C}{M^3}\int_{B(M)}\epsilon^2\mathcal{M}
(|\nabla u_n|)\big(t+\epsilon^2 s,X_{n,\epsilon}(t+\epsilon^2 s,t,x)+\epsilon (y+z)\big)dz\\
&=\epsilon^2\mathcal{M}(\mathcal{M}(|\nabla u_n|))(\Box).\\
\end{split}\end{equation*} where $\Box=\big(t+\epsilon^2 s,X_{n,\epsilon}(t+\epsilon^2 s,t,x)+\epsilon y\big)$
and  we used the idea of \eqref{Maximal_scaling}
for second and third equalities. Likewise,
$\mathcal{M}(|\nabla v|^q)(s,y)
=\epsilon^{2q}\cdot\mathcal{M}(|\nabla u_n|^q)(\Box)$
and 
$\mathcal{M}(|\mathcal{M}(|\nabla v|)|^q)(s,y)
=\epsilon^{2q}\cdot\mathcal{M}(|\mathcal{M}(|\nabla u_n|)|^q)(\Box)$. \\

\noindent Also, we have for any function $\mathcal{G}\in C_0^\infty$,
\begin{equation*}\begin{split}
\sup_{\delta>0}&(|{\mathcal{G}_{\delta}}*\nabla^2 Q|)(s,y)=
\sup_{\delta>0}\Big|\int_{\mathbb{R}^3}\frac{1}{\delta^{3}}{\mathcal{G}}(\frac{z}{\delta})\cdot
(\nabla^2 Q)(s,y-z)dz\Big|\\
&=
\sup_{\delta>0}\Big|\int_{\mathbb{R}^3}\frac{\epsilon^4}{\delta^{3}}{\mathcal{G}}(\frac{z}{\delta})\cdot
(\nabla^2 P_n)\big(t+\epsilon^2 s,X_{n,\epsilon}(t+\epsilon^2 s,t,x)+\epsilon (y-z)\big)dz\Big|\\
&=
\sup_{\delta>0}\Big|\int_{\mathbb{R}^3}\frac{\epsilon^4}{\epsilon^3
\delta^{3}}{\mathcal{G}}(\frac{z}{\epsilon\delta})\cdot
(\nabla^2 P_n)\big(t+\epsilon^2 s,X_{n,\epsilon}(t+\epsilon^2 s,t,x)+\epsilon y-z\big)dz\Big|
\end{split}\end{equation*}
\begin{equation*}\begin{split}
&=
\sup_{\epsilon\delta>0}\Big|\int_{\mathbb{R}^3}\epsilon^4
{\mathcal{G}}_{\epsilon\delta}(z)\cdot
(\nabla^2 P_n)\big(t+\epsilon^2 s,X_{n,\epsilon}(t+\epsilon^2 s,t,x)+\epsilon y-z\big)dz\Big|\\
&=
\sup_{\epsilon\delta>0}\epsilon^4\Big|
\Big({\mathcal{G}}_{\epsilon\delta}*
(\nabla^2 P_n)\Big)\big(t+\epsilon^2 s,X_{n,\epsilon}(t+\epsilon^2 s,t,x)+\epsilon y\big)\Big|\\
&=\epsilon^4
\sup_{\delta>0}\Big|
{\mathcal{G}_{\delta}}*
(\nabla^2 P_n)\Big|(\Box).
\end{split}\end{equation*} Thus by taking $\mathcal{G}=(\nabla^{m-1}{h^{\alpha}})$, 
we have \begin{equation*}\begin{split}
\sup_{\delta>0}(|{(\nabla^{m-1}{h^{\alpha})_\delta}}*\nabla^2 Q|)(s,y)
&=\epsilon^4
\sup_{\delta>0}\Big|
{(\nabla^{m-1}{h^{\alpha})_\delta}}*
(\nabla^2 P_n)\Big|\big(\Box\big).
\end{split}\end{equation*}

\noindent  As a result, 
we have 
\begin{equation*}\begin{split}
&\int_{-4}^{0}\int_{B(2)}\Big[|\nabla {v}|^2 +
 |\nabla^2{Q}| + |\mathcal{M}(\nabla {v})|^2+\\
&\quad+|\mathcal{M}(\mathcal{M}(|\nabla v|))|^2+
|\mathcal{M}(|\mathcal{M}(|\nabla v|)|^q)|^{q/2}\\
&\quad+|\mathcal{M}(|\nabla v|^q)|^{2/q}+
\sum_{m=d}^{d+4} \sup_{\delta>0}(
|{(\nabla^{m-1}{h^{\alpha})_\delta}}*\nabla^2 Q|)
\Big](s,y) 
 dyds \\
  &=\epsilon^4\int_{-4}^{0}\int_{B(2)}F_n(\Box)dyds \\ 
  &=\epsilon^{-1}\int_{t-4\epsilon^2}^{t}\int_{B(2\epsilon)}F_n
 \big(s,X_{n,\epsilon}(s,t,x)+y\big)dyds 
 \leq {\bar{\eta}}.\\
\end{split}\end{equation*} 

\noindent Then $(v,Q)$ satisfies condition 
\eqref{local_study_condition3} as well as 
\eqref{local_study_condition1} and
\eqref{local_study_condition2}
 of the proposition \ref{local_study_thm}
with $r=1/(n\epsilon)\in[0,\infty)$. In sum if $x\in\Omega_{n,\epsilon,t}$
 and if $4\epsilon^2\leq t$, then
\begin{equation*}
| (-\Delta)^{\alpha/2}\nabla^d {v}(0,0)|\leq C_{d,\alpha}.
\end{equation*}

 Because $u_n$ is a smooth solution of (Problem I-n),
$(-\Delta)^{\alpha/2}\nabla^d u_n$ is not only a distribution
but also a locally integrable function. 
Indeed, from a boot-strapping argument, it is 
easy to show that $\nabla^d u_n(t)$ has a good behavior at infinity
which is required
in order to use the integral 
representation \eqref{fractional_integral} pointwise. For example, 
 $(C^2\cap W^{2,\infty})$ is enough (For a better approach, see  Silvestre \cite{silve:fractional}). Also it can be easily verified that 
the resulting function $(-\Delta)^{\alpha/2}[\nabla^d u_n(t,\cdot)](x)$
from the integral representation  \eqref{fractional_integral}
satisfies the definition in the remark \ref{frac_rem}. \\

\noindent 
As a result, it makes sense to talk about pointwise values 
of $(-\Delta)^{\alpha/2}\nabla^d u_n$. Thus,
from the simple observation: 
 for any integer $d\geq 1$ and any real $0<\alpha<2$,
\begin{equation*}
 (-\Delta)^{\alpha/2}\nabla^d {v}(0,0)=
\epsilon^{d+\alpha+1} (-\Delta)^{\alpha/2}\nabla^d u_n(t,x),
\end{equation*} 
we can
deduce the following set inclusion:
\begin{equation}\label{fractional_inclusion}
\{x\in \mathbb{R}^3 | \quad|(-\Delta)^{\frac{\alpha}{2}}\nabla^d u_n(t,x)|> 
\frac{C_{d,\alpha}}{\epsilon^{d+\alpha +1}}\}\qquad\subset\qquad\Omega_{n,\epsilon,t}^C.
\end{equation}

\noindent Thus we have 
for any $0<t<\infty$ and for any $0<4\epsilon^2\leq t$
\begin{equation*}
|\{x\in \mathbb{R}^3 | \quad|(-\Delta)^{\frac{\alpha}{2}}\nabla^d u_n(t,x)|> 
\frac{C_{d,\alpha}}{\epsilon^{d+\alpha +1}}\}|\leq|\Omega_{n,\epsilon,t}^C|
\leq \epsilon^4\cdot \tilde{g}_n(t).
\end{equation*} 

\noindent Define  $p=4/(d+\alpha+1)$. Like we did for case $\alpha=0$, we obtain

\begin{equation*}\begin{split} 
 \|(-\Delta)^{\frac{\alpha}{2}}\nabla^d u_n\|^p_{L^{p,\infty}
(t_0,\infty;L^{p,\infty}(K))}
 &\leq C\Big(\|u_0\|^2_{L^{2}(\mathbb{R}^3)} + 
\frac{|K|}{t_0}\Big)
\end{split}\end{equation*} 
for any integer $n,d\geq1$, for any real $\alpha\in(0,2)$, for
any bounded open subset $K$ of $\mathbb{R}^3$ and for any $t_0\in(0,\infty)$ where $C$ depends only on $d$ and $\alpha$.\\

If we restrict further $(d+\alpha)<3$, then  
 $ p = \frac{4}{d+\alpha+1}>1 $. This implies 
$(-\Delta)^{\alpha/2}\nabla^d u_n\in L^q_{loc}((t_0,\infty)\times K)$ 
for every $q$ between $1$ and $p$, 
 and the norm is uniformly bounded in $n$. 
 Thus, 
from weak-compactness of $L^q$ for $q>1$, 
 we conclude that 
 if $u$ is a suitable weak solution obtained by a limiting argument
 of $u_n$, then any higher derivatives
 $(-\Delta)^{\alpha/2}\nabla^d u$,
 which is defined in the remark \ref{frac_rem},
lie in  $L^1_{loc}$ 
as long as $(d+\alpha)<3$ with the same estimate
\begin{equation}\begin{split}\label{final_comment} 
 \|(-\Delta)^{\frac{\alpha}{2}}\nabla^d u\|^p_{L^{p,\infty}
(t_0,\infty;L^{p,\infty}(K))}
 &\leq C_{d,\alpha}\Big(\|u_0\|^2_{L^{2}(\mathbb{R}^3)} + 
\frac{|K|}{t_0}\Big).
\end{split}\end{equation} 
\end{proof}

\subsection{Proof of theorem \ref{main_thm} part (I)}\label{proof_main_thm_I}

\begin{proof}[Proof of proposition \ref{main_thm} part (I)]

Suppose that $(u,P)$ is a smooth solution of the Navier-Stokes equations
\eqref{navier}
on $(0,T)$ with \eqref{initial_condition}. Then it satisfies all conditions of 
{(Problem I-n)} for $n=\infty$ on $(0,T)$. 
As we mentioned at the remark \ref{n_infty_rem}, we  follow
every steps in the subsections 
 \ref{prof_main_thm_II_alpha_0}
and \ref{prof_main_thm_II_alpha_not_0} except each last arguments
which impose $d<3$ or $(d+\alpha)<3$.
Indeed,
under the scaling \eqref{special designed scaling}, 
the resulting function $(v,Q)$ is a solution for (Problem II-r)
for $r=0$.\\

\noindent 
Recall that $u$ is smooth by assumption.
As a result, we do NOT have
any restriction like $d<3$ or $(d+\alpha)<3$ at this time because
we do not need any limiting argument any more 
which requires a weak-compactness.
Thus, we obtain \eqref{final_comment} 
 for any integer $d\geq1$,
for any real $\alpha\in[0,2)$
and for any $t_0\in(0,T)$.
It finishes 
 the proof of the part $(I)$ of the main theorem \ref{main_thm}.

\end{proof}

\section*{A. Appendix: proofs of some technical lemmas}\label{appendix}
\begin{proof} [proof for lemma \ref{higher_pressure}]
Fix $(n,a,b,p)$ such that $n\geq 2$, $0<b<a<1$ and $1<p<\infty$.
Let $\alpha$ be any multi index such that
$|\alpha|=n$ and $D^{\alpha}=\partial_{\alpha_1} \partial_{\alpha_2}
D^{\beta}$ where $\beta$ is a multi index with  $|\beta|=n-2$.\\

Observe that from $\ebdiv(v_2)=0$ and $\ebdiv(v_1)=0$,
\begin{equation*}\begin{split}
-\Delta(D^{\alpha}P) &=\ebdiv\ebdiv D^{\alpha}(v_2\otimes v_1)\\
 &=D^{\alpha}\Big(\sum_{ij} (\partial_j v_{2,i})(\partial_iv_{1,j})\Big)\\
&=\partial_{\alpha_1} \partial_{\alpha_2}
H\\
\end{split}\end{equation*} where $H=D^{\beta}\Big(\sum_{ij} 
(\partial_j v_{2,i})(\partial_iv_{1,j})\Big)$
and  $v_k=(v_{k,1},v_{k,2},v_{k,3})$ for $k=1,2$.\\ 
%
\noindent Then for any  $(p_1,p_2)$ such that $\frac{1}{p}=\frac{1}{p_1}+\frac{1}{p_2}$
\begin{equation*}\begin{split} 
\|H\|_{L^{p}(B(a))}
&\leq C 
\|  v_2 \|_{W^{n-1,p_2}(B(a))}\cdot
\|  v_1 \|_{W^{n-1,p_1}(B(a))}
\end{split}\end{equation*} 
where $C$ is independent of choice of $p_1$ and $p_2$
and 
\begin{equation*}\begin{split} 
\|H\|_{W^{1,{\infty}}(B(a))}&
\leq C
\|  v_2 \|_{W^{n,\infty}(B(a))}\cdot
\|  v_1 \|_{W^{n,\infty}(B(a))}.
\end{split}\end{equation*}

Fix a function $\psi \in C^{\infty}(\mathbb{R}^3)$ satisfying:
\begin{equation*}\begin{split}
&\psi = 1 \quad\mbox{ in } B(b+\frac{a-b}{3}),\quad
\psi = 0 \quad\mbox{ in } (B(b+\frac{2(a-b)}{3}))^C \mbox{ and }
0\leq \psi \leq 1.
\end{split}\end{equation*}

\noindent We decompose $D^{\alpha}P$ by using $\psi$:
\begin{equation*}\begin{split}   
-\Delta({\psi} D^{\alpha}P) 
&= -{\psi} \Delta D^{\alpha}P 
- 2\ebdiv((\nabla {\psi})(D^{\alpha}P)) +
(D^{\alpha} P)\Delta{\psi}\\
 &= {\psi}\partial_{\alpha_1} \partial_{\alpha_2}H 
- 2\ebdiv((\nabla {\psi})(D^{\alpha}P)) +
(D^{\alpha} P)\Delta{\psi}\\
&= - \Delta Q_{1} - \Delta Q_{2} - \Delta Q_{3}\\
\end{split}
\end{equation*} where
\begin{equation*}\begin{split} 
 - \Delta Q_{1} &= \partial_{\alpha_1} \partial_{\alpha_2} ({\psi}H),  \\
- \Delta Q_{2} &=  - \partial_{\alpha_2}[(\partial_{\alpha_1} {\psi})(H)]
  -\partial_{\alpha_1}[(\partial_{\alpha_2} {\psi})(H)] 
+ (\partial_{\alpha_1} \partial_{\alpha_2} {\psi})(H) \quad\mbox{ and} \\
 - \Delta Q_{3}\mbox{ } &= - 2\ebdiv((\nabla {\psi})(D^{\alpha}P)) +
(D^{\alpha} P)\Delta{\psi}.
\end{split}\end{equation*}
 Here $Q_{2}$ and $Q_{3}$ are defined by the representation formula 
${(-\Delta)}^{-1}(f) = \frac{1}{4\pi}(\frac{1}{|x|} * f)$\\
while $Q_{1}$ by the Riesz transforms.\\

\noindent Then, by the Riesz transform, 
\begin{equation*}\begin{split} 
\|Q_{1}\|_{L^{p}(B(b))}
&\leq 
C\|\psi H\|_{L^{p}(\mathbb{R}^3)}
\leq C
\|H\|_{L^{p}(B(a))}\\
&\leq C
\|  v_2 \|_{W^{n-1,p_2}(B(a))}\cdot
\|  v_1 \|_{W^{n-1,p_1}(B(a))}.\\
\end{split}\end{equation*}


\noindent Moreover, using Sobolev,
\begin{equation*}\begin{split} 
\|Q_{1}\|_{L^\infty(B(b))}
&\leq 
C\Big(\|Q_{1}\|_{L^4(B(b))}+\|\nabla Q_{1}\|_{L^4(B(b))}\Big)\\
&\leq C
\|H\|_{W^{1,{4}}(B(a))}
\leq C
\|H\|_{W^{1,{\infty}}(B(a))}\\
&\leq C
\|  v_2 \|_{W^{n,\infty}(B(a))}\cdot
\|  v_1 \|_{W^{n,\infty}(B(a))}.\\
\end{split}\end{equation*}

\noindent For $x\in B(b)$,
\begin{equation*}\begin{split}
|Q_2(x)| 
&= \Bigg|\frac{1}{4\pi}\int_{(B(b+\frac{2(a-b)}{3})-B(b+\frac{a-b}{3}))}
\frac{1}{|x-y|} \Big(  \partial_{\alpha_2}[(\partial_{\alpha_1} {\psi})(H)](y)\\
 &\quad\quad\quad\quad -\partial_{\alpha_1}[(\partial_{\alpha_2} {\psi})(H)](y) 
+ (\partial_{\alpha_1} \partial_{\alpha_2} {\psi})(H)(y)\Big)dy\Bigg|\\
&\leq 2\|\nabla \psi\|_{L^\infty}\cdot
\sup_{y\in B(b+\frac{a-b}{3})^C}(|\nabla_y\frac{1}{|x-y|}|) 
\cdot \|H\|_{L^1(B(a))}\\
&\quad\quad+ \|\nabla^2 \psi\|_{L^\infty}\cdot
\sup_{y\in B(b+\frac{a-b}{3})^C}(|\frac{1}{|x-y|}|) 
\cdot \|H\|_{L^1(B(a))}\\
&\leq C\cdot \|H\|_{L^1(B(a))}\\
\end{split}\end{equation*} because $|x-y|\geq (a-b)/3$.
Likewise, for $x\in B(b)$,
\begin{equation*}\begin{split}
|Q_3(x)| 
&\leq C\Big(\sum_{k=0}^{n}\|\nabla^{k+1} \psi\|_{L^\infty}\Big)\cdot
\Big(\sum_{k=0}^{n}\sup_{y\in B(b+\frac{a-b}{3})^C}
|\nabla^{k+1}_y\frac{1}{|x-y|}|\Big) 
\cdot \|P\|_{L^1(B(a))}\\
&+ C\Big(\sum_{k=0}^{n}\|\nabla^{k+2} \psi\|_{L^\infty}\Big)\cdot
\Big(\sum_{k=0}^{n}\sup_{y\in B(b+\frac{a-b}{3})^C}
|\nabla^{k}_y\frac{1}{|x-y|}|\Big) 
\cdot \|P\|_{L^1(B(a))}\\
&\leq C\cdot \|P\|_{L^1(B(a))}.
\end{split}\end{equation*} 

\noindent Finally,
\begin{equation*}\begin{split} 
\|\nabla^n &P\|_{L^{p}(B(b))}
\leq \|Q_1\|_{L^{p}(B(b))}+
C\||Q_2|+|Q_3|\|_{L^{\infty}(B(b))}\\
&\leq  C\cdot
\|H\|_{L^p(B(a))} +
C\cdot \|H\|_{L^1(B(a))}
+C\cdot \|P\|_{L^1(B(a))}\\
&\leq  C_{a,b,p,n}\Big(
\|  v_2 \|_{W^{n-1,p_2}(B(a))}\cdot
\|  v_1 \|_{W^{n-1,p_1}(B(a))} 
+\cdot \|P\|_{L^1(B(a))}\Big)
\end{split}\end{equation*} and
\begin{equation*}\begin{split} 
\|\nabla^n &P\|_{L^{\infty}(B(b))}
\leq \||Q_1|+|Q_2|+|Q_3|\|_{L^{\infty}(B(b))}\\
&\leq  C\cdot
\|H\|_{W^{1,\infty}(B(a))} +
C\cdot \|H\|_{L^1(B(a))}
+C\cdot \|P\|_{L^1(B(a))}\\
&\leq  C_{a,b,n}\Big(
\|  v_2 \|_{W^{n,\infty}(B(a))}\cdot
\|  v_1 \|_{W^{n,\infty}(B(a))} 
+ \|P\|_{L^1(B(a))}\Big).
\end{split}\end{equation*}
\end{proof}

\begin{proof} [proof for lemma \ref{lem_a_half_upgrading_large_r}]
We fix $(n,a,b)$ such that $n\geq 0$ and $0<b<a<1$
 and let $\alpha$ be a multi index with $|\alpha|=n$. Then,
by taking $D^{\alpha}$ to \eqref{navier_Problem II-r}, we have
\begin{equation}
 \begin{split}\label{eq_d_alpha_large_r}
0
=&\partial_t(D^{\alpha}{v_1})+\sum_{\beta\leq\alpha, |\beta|>0}\binom{\alpha}{\beta}
((D^{\beta}{v_2})
\cdot\nabla)(D^{\alpha-\beta}{v_1} )+({v_2} 
\cdot\nabla)(D^{\alpha}{v_1} )\\&
\quad\quad\quad\quad\quad
\quad\quad\quad\quad\quad
+ \nabla(D^{\alpha} P) -\Delta(D^{\alpha} {v_1} ).
\end{split}
\end{equation} 

\noindent We define $\Phi(t,x)\in C^\infty$ by
$0\leq\Phi\leq 1, 
\Phi = 1 \mbox{ on }   Q_{{b}} \mbox{ and } 
\Phi = 0 \mbox{ on }   Q_{{a}}^C.$

We observe that, for $p\geq \frac{1}{2}$ and for $f\in C^\infty$,
 \begin{equation*}\begin{split} 
&({p}+\frac{1}{2})|f|^{{p}-\frac{3}{2}}f\cdot\partial_{x} f = \partial_{x}|f|^{{p}+\frac{1}{2}} 
\mbox{ and }  ({p}+\frac{1}{2})|f|^{{p}-\frac{3}{2}}f\cdot\Delta f 
\leq \Delta(|f|^{{p}+\frac{1}{2}}) .
\end{split}\end{equation*} which can be verified by direct computations
with the fact $|\nabla f|\geq |\nabla|f||$.\\

\noindent Now
 we multiply $(p+\frac{1}{2}){\Phi}\frac{D^{\alpha}v_1}
{|D^{\alpha}v_1|^{(3/2)-p}}$ to \eqref{eq_d_alpha_large_r},
 use the above observation
 and integrate in x. Then we have for any $p\geq \frac{1}{2}$,\\
\begin{equation*}\begin{split} 
&\frac{d}{dt}\int_{\mathbb{R}^3}{\Phi}(t,x)|D^{\alpha}{v_1} (t,x)|^{p+\frac{1}{2}}dx\\
&\leq\int_{\mathbb{R}^3}(|\partial_t{\Phi}(t,x)|+|\Delta{\Phi}(t,x)|)
|D^{\alpha}{v_1} (t,x)|^{p+\frac{1}{2}}dx \\
&\quad+(p+\frac{1}{2})\int_{\mathbb{R}^3}|\nabla D^{\alpha}P(t,x)||D^{\alpha}{v_1} (t,x)|^{p-\frac{1}{2}}dx \\
&\quad+(p+\frac{1}{2})\sum_{\beta\leq\alpha, |\beta|>0}\binom{\alpha}{\beta}
\int_{\mathbb{R}^3}\Big|(D^{\beta}{v_2}(t,x)
\cdot\nabla)D^{\alpha-\beta}{v_1} (t,x)\Big||D^{\alpha}{v_1} (t,x)|^{p-\frac{1}{2}}dx \\
&\quad\quad -\int_{\mathbb{R}^3}{\Phi}(t,x)(v_2(t,x)
\cdot\nabla)(|D^{\alpha}{v_1} (t,x)|^{p+\frac{1}{2}})dx \\
 \end{split}
\end{equation*}
\begin{equation*}\begin{split} 
&\leq C\||\nabla^n {v_1} (t,\cdot)|^{p+\frac{1}{2}}\|_{L^{1}(B{(a)})} \\
&\quad+C\|\nabla^{n+1} P(t,\cdot)\|_{L^{2p}(B{(a)})} 
\cdot\||\nabla^n {v_1} (t,\cdot)|^{p-\frac{1}{2}}\|_{L^{\frac{2p}{2p-1}}(B{(a)})} \\
&+C \| {v_2} (t,\cdot)\|_{W^{n,\infty}(B{(a)})}\cdot
\|{v_1} (t,\cdot)\|_{W^{n,p+\frac{1}{2}}(B{(a)})}\cdot
\||\nabla^n {v_1} (t,\cdot)|^{p-\frac{1}{2}}\|_{L^{\frac{p+\frac{1}{2}}{p-\frac{1}{2}}}(B{(a)})} \\
&-\int_{\mathbb{R}^3}{\Phi}(t,x)\ebdiv\Big({v_2} (t,x)
\otimes|D^{\alpha}{v_1} (t,x)|^{p+\frac{1}{2}}\Big)dx \\
 \end{split}
\end{equation*}
\begin{equation*}\begin{split} 
&\leq C\|{v_1} (t,\cdot)\|^{p+\frac{1}{2}}_{W^{n,p+\frac{1}{2}}(B{(a)})}\\
&\quad+C\|\nabla^{n+1} P(t,\cdot)\|_{L^{2p}(B{(a)})} 
\cdot\|\nabla^n {v_1} (t,\cdot)\|^{p-\frac{1}{2}}_{L^{p}(B{(a)})} \\
&\quad+C \| {v_2} (t,\cdot)\|_{W^{n,\infty}(B{(a)})}
\cdot\|{v_1} (t,\cdot)\|^{p+\frac{1}{2}}_{W^{n,p+\frac{1}{2}}(B{(a)})} \\
&\quad+C\| {v_2} (t,\cdot)\|_{L^{\infty}(B{(a)})}\cdot
\|\nabla^n {v_1} (t,\cdot)\|^{p+\frac{1}{2}}_{L^{p+\frac{1}{2}}(B{(a)})}.\quad\quad\quad\quad\quad\quad\quad\quad\quad\quad
 \end{split}
\end{equation*}

\noindent Then integrating on $[-a^2,t]$ for  any $t\in[-b^2,0]$ gives 
 \begin{equation*}\begin{split}  
&\|D^{\alpha} {v_1}  \|^{p+\frac{1}{2}}_{L^{\infty}(-{({b})}^2,0  ;L^{p+\frac{1}{2}}(B{({b})}))}\\
&\leq C\|{v_1} \|^{p+\frac{1}{2}}
_{L^{p+\frac{1}{2}}(-{({a})}^2,0;W^{n,p+\frac{1}{2}}(B{({a})}))}\\
&\quad+C\|\nabla^{n+1} P\|_{L^{1}(-{({a})}^2,0;L^{2p}(B{({a})}))}\cdot
\|\nabla^n {v_1} \|^{p-\frac{1}{2}}_{L^{\infty}(-{({a})}^2,0;L^{p}(B{(a)}))}\\ 
&\quad+C \| {v_2} \|_{L^{2}(-{({a})}^2,0;W^{n,\infty}(B{({a})}))}
\cdot\|{v_1} \|^{p+\frac{1}{2}}_{L^{2p+1}(-{({a})}^2,0;W^{n,p+\frac{1}{2}}(B{({a})}))} \\
&\quad+C\| {v_2} \|_{L^{2}(-{({a})}^2,0;L^{\infty}(B{(a)}))}\cdot
\|\nabla^n {v_1} \|^{p+\frac{1}{2}}_{L^{2p+1}(-{({a})}^2,0;L^{p+\frac{1}{2}}(B{(a)}))}\\
 \end{split}
\end{equation*}
\noindent Thus for the case $p=1/2$, we have
\begin{equation*}\begin{split}  
&\|D^{\alpha} {v_1}  \|_{L^{\infty}(-{({b})}^2,0  ;L^{1}(B{({b})}))}\\
&\leq C\Big[
\Big(\| v_2\|_{L^2(-{{a}^2},0;W^{n,\infty}(B({a})))} +1\Big)
\cdot
  \| {v_1} \|_{L^{2}(-{a}^2,0;W^{n,{1}}(B{(a)}))}\\
&\quad\quad\quad\quad\quad\quad\quad\quad+ \|\nabla^{n+1}P 
\|_{L^{1}(-{a}^2,0;L^{1}(B{(a)}))}  \Big]
\end{split}\end{equation*} 
while, for the case $p\geq 1$, we have
\begin{equation*}\begin{split}
&\|D^{\alpha} {v_1}  \|^{p+\frac{1}{2}}_{L^{\infty}(-{({b})}^2,0  ;L^{p+\frac{1}{2}}(B{({b})}))}\\
&\leq C
\Big(\| v_2\|_{L^2(-{{a}^2},0;W^{n,\infty}(B({a})))} +1\Big)
\\&\quad\quad\quad\cdot
\Big(
\|{v_1} \|^{\frac{1}{p+\frac{1}{2}}}
_{L^{2}(-{({a})}^2,0;W^{n,2p}(B{({a})}))}
\cdot
\|{v_1} \|^{1-\frac{1}{p+\frac{1}{2}}}_{L^{\infty}(-{({a})}^2,0;W^{n,p}(B{({a})}))}\Big)^{p+\frac{1}{2}}\\
&\quad\quad\quad+C\|\nabla^{n+1} P\|_{L^{1}(-{({a})}^2,0;L^{2p}(B{({a})}))}\cdot
\| {v_1} \|^{p-\frac{1}{2}}_{L^{\infty}(-{({a})}^2,0;W^{n,p}(B{(a)}))}\\ 
&\leq C_{a,b,n,p}\Big[
\Big(\| v_2\|_{L^2(-{{a}^2},0;W^{n,\infty}(B({a})))} +1\Big)
\cdot
\|{v_1} \|_{L^{2}(-{({a})}^2,0;W^{n,2p}(B{({a})}))}\\
&\quad\quad\quad\quad\quad\quad+ \|\nabla^{n+1} P\|_{L^{1}(-{({a})}^2,0;L^{2p}(B{({a})}))}
\Big]\cdot
\| {v_1} \|^{p-\frac{1}{2}}_{L^{\infty}(-{({a})}^2,0;W^{n,p}(B{(a)}))}.
 \end{split}
\end{equation*}



\end{proof}

\begin{proof}[proof for lemma \ref{lem_Maximal 2.5 or 4}]
Fix any $M_0>0$ and $1\leq p<\infty$ first. Then, for any $M\geq M_0$ and 
 for any $f\in C^1(\mathbb{R}^3)$
such that $ \int_{\mathbb{R}^3}\phi(x)f(x)dx=0$, 
 we have
 \begin{equation*}\begin{split}
&\|f\|_{L^p(B(M))}=
\Big(\int_{B(M)}\Big|\int_{\mathbb{R}^3}(f(x)-f(y))
\phi(y)dy\Big|^{p}dx\Big)^{1/p}\\
&\leq C\Big(\int_{B(M)}\Big(\int_{B(1)}\Big|f(x)-f(y)\Big|
dy\Big)^{p}dx\Big)^{1/p}\\
&\leq C\Big(\int_{B(M)}\Big(\int_{B(1)} 
\int_0^1\Big|(\nabla f)((1-t)x+ty)\cdot(x-y)\Big|dt
dy\Big)^{p}dx\Big)^{1/p}
\end{split}\end{equation*}
 \begin{equation*}\begin{split}
&\leq C(M+1)\Big(\int_{B(M)}\Big(\int_{B(1)} 
\int_0^1\Big|(\nabla f)((1-t)x+ty)\Big|dt
dy\Big)^{p}dx\Big)^{1/p}\\
&\leq C(M+1)\Big(\int_{B(M)}\Big(\int_{B(1)} 
\int_0^{\frac{M}{M+1}}\Big|(\nabla f)((1-t)x+ty)\Big|dt
dy\Big)^{p}dx\Big)^{1/p}\\
&\quad + C(M+1)\Big(\int_{B(M)}\Big(\int_{B(1)} 
\int_{\frac{M}{M+1}}^1\Big|(\nabla f)((1-t)x+ty)\Big|dt
dy\Big)^{p}dx\Big)^{1/p}\\
&=(I)+(II)
\end{split}\end{equation*} where we used $x\in B(M)$ and $y\in B(1)$.\\

\noindent For $(I)$,
 \begin{equation*}\begin{split}
&(I)
\leq C_{M_0}\Big(\int_{B(1)} 
\int_0^{\frac{M}{M+1}}\Big(\int_{B(M)}
\Big|(\nabla f)((1-t)x+ty)\Big|^{p}dx\Big)^{1/p}dt
dy\Big)\\
&\leq C_{M_0}\cdot M
\int_0^{\frac{M}{M+1}}\frac{1}{(1-t)^{3/p}}\Big(\int_{B((1-t)M+1)}
\Big|(\nabla f)(z)\Big|^{p}dz\Big)^{1/p}dt\\
&\leq C_{M_0}\cdot M
\int_0^{\frac{M}{M+1}}\frac{1}{(1-t)^{3/p}}
\Big(
\int_{B(1)}
\int_{B((1-t)M+2)}
\Big|(\nabla f)(z+u)\Big|^{p}dzdu\Big)^{1/p}dt\\
&\leq C_{M_0}\cdot M
\int_0^{\frac{M}{M+1}}\frac{((1-t)M+2)^{3/p}}{(1-t)^{3/p}}\Big(
\int_{B(1)}
\mathcal{M}(|\nabla f|^{p})(u)du\Big)^{1/p}dt\\
&\leq C_{M_0,p}\cdot M \cdot\|\mathcal{M}(|\nabla f|^{p})\|^{1/p}_{L^1(B(1))}
\int_0^{\frac{M}{M+1}}\Big(M^{3/p}+\frac{1}{(1-t)^{3/p}}\Big)
dt
\\
&\leq C_{M_0,p}\cdot M \cdot\|\mathcal{M}(|\nabla f|^{p})\|^{1/p}_{L^1(B(1))}
\Big(M^{3/p}+\int_{\frac{1}{M+1}}^{1}\frac{1}{s^{3/p}}
ds\Big)
\\
&\leq C_{M_0}\cdot M \cdot\|\mathcal{M}(|\nabla f|^{p})\|^{1/p}_{L^1(B(1))}
\Big(M^{3/p}+{(M+1)}^{3/p}
\Big)
\\
&\leq C_{M_0,p}\cdot M^{1+\frac{3}{p}} 
\cdot\|\mathcal{M}(|\nabla f|^{p})\|^{1/p}_{L^1(B(1))}
\end{split}\end{equation*} where we used
an integral version of the Minkoski's inequality 
and   $(1+M)\leq C_{M_0}\cdot M$ from $M\geq M_0$
 for the first inequality.\\

\noindent For $(II)$, observe that if $\frac{M}{M+1}\leq t\leq 1$, then
$0\leq 1-t\leq \frac{1}{M+1}$ and
 \begin{equation*}\begin{split}
|(1-t)x+ty|\leq (1-t)\cdot|x|+t|y|\leq \frac{M}{M+1}+1\leq 2
\end{split}\end{equation*} because $x\in B(M)$ and $y\in B(1)$. Thus
 \begin{equation*}\begin{split}
(II)
& \leq
C_{M_0}\cdot M\Big(\int_{B(M)}\Big( 
\int_{\frac{M}{M+1}}^1\frac{1}{t^3}\int_{B(2)}\Big|(\nabla f)(z)\Big|dz
dt\Big)^{p}dx\Big)^{1/p}\\
& \leq
C_{M_0}\cdot M\cdot M^{3/p}\cdot
\int_{B(2)}|(\nabla f)(z)|dz
\cdot
\int_{\frac{M}{M+1}}^1\frac{1}{t^3}
dt\\
& \leq
C_{M_0} M^{1+\frac{3}{p}}\cdot
\|\nabla f\|_{L^1(B(2))}.
\end{split}\end{equation*}
\end{proof}

\begin{ack}
 The second author was partially supported by both the NSF 
and  the EPSRC Science and Innovation award to the Oxford Centre
for Nonlinear PDE (EP/E035027/1).
\end{ack}

\bibliographystyle{plain}
\bibliography{Fractional_NS_Choi_Vasseur}

\end{document}